\newtheorem{theorem}{Theorem}[section]
\newtheorem{lemma}{Lemma}[section]
\numberwithin{equation}{section}
\newcommand{\esp}{\mathbf{L}}
\newcommand{\F}{\mathcal F}
\newcommand{\B}{\mathcal{B}}
\begin{document}
\title[Boundary value problem for 1D NLS]{Inhomogeneous Mixed-boundary
value problem for one dimensional nonlinear Schr\"{o}dinger equations via
factorization techniques}
\author[L. Esquivel]{L. Esquivel}
\address{Gran Sannso Science Institute\\ L'Aquila, Italia}
\email{liliane.esquivel@gssi.it}
\author[N. Hayashi]{Nakao Hayashi}
\address{Department of Mathematics\\
Graduate School of Science, Osaka University, Osaka, Toyonaka 560-0043, Japan%
}
\email{nhayashi@math.sci.osaka-u.ac.jp}
\author[E. Kaikina]{Elena I. Kaikina}
\address{Centro de Ciencias Matem\'{a}ticas\\
UNAM Campus Morelia, AP 61-3 (Xangari), Morelia CP 58089, Michoac\'{a}n,
Mexico}
\email{ekaikina@matmor.unam.mx}
\subjclass{Primary 35Q35}
\keywords{Nonlinear Schr\"{o}dinger Equation, Large Time Asymptotics,
Inhomogeneous initial-boundary value problem}

\begin{abstract}
We consider the inhomogeneous Mixed-boundary value problem for the cubic
nonlinear Schr\"{o}dinger equations on the half line. We present sufficient
conditions of initial and boundary data which ensure asymptotic behavior of
small solutions to equations by using the classical energy method and
factorization techniques
\end{abstract}

\maketitle
\section{Introduction\label{Section 0}}

We consider the initial-boundary value problem for the nonlinear Schr\"{o}%
dinger equations on the half-line 
\begin{equation}
\left\{ 
\begin{array}{c}
Lu=f(t\mathbf{,}x),x\text{ }\mathbf{\in R}^{+},\text{ }t>0, \\ 
u(0,x)=u_{0}(x),x\text{ }\mathbf{\in R}^{+}, \\ 
\mathcal B u\left( t,0\right) =h\left( t\right) ,\text{ }t>0%
\end{array}%
\right.  \label{1.1-1}
\end{equation}%
with inhomogeneous mixed data $h\left( t\right) $, where $L=i\partial
_{t}+\frac{1}{2}\Delta ,$ $\Delta =\partial _{x}^{2},$ $\mathcal{B}=1+\alpha \partial_x$ and $f(t\mathbf{,}x)$ is
the power nonlinearity such that 
\begin{equation*}
f(t\mathbf{,}x)=\lambda \left\vert u\right\vert ^{p-1}u,\lambda \in \mathbf{%
C,}
\end{equation*}
We assume that $u_{0}\left( 0\right) =0$. In this case, compatibility
condition says that $h\left( 0\right) =0.$ We find that solutions of (\ref%
{1.1-1}) are represented as 
\begin{equation*}
u\left( t\right) =w\left( t\right) +z\left( t\right) ,
\end{equation*}%
where $w\left( t\right) $ is the solution of the homogeneous mixed
boundary value problem and $z\left( t\right) $ is the solution of the
inhomogeneous mixed boundary value problem with zero initial data. More
precisely, they are written explicitly through $\left( u_{0}\left( x\right)
,h\left( t\right) \right) $ as 
\begin{equation*}
w\left( t\right) =U\left( t\right) u_{0}-i\int_{0}^{t}U\left( t-\tau
\right) f(\tau )d\tau ,
\end{equation*}%
\begin{equation*}
U\left( t\right) \phi =\frac{1}{\sqrt{2\pi it}}\int_{0}^{\infty }\left(
e^{\frac{i\left\vert x-y\right\vert ^{2}}{2t}}-e^{\frac{i\left\vert
x+y\right\vert ^{2}}{2t}}\right) \phi \left( y\right) dy,
\end{equation*}%
\begin{equation*}
z\left( t,x\right) =\frac{i}{\pi}\mathcal B^{-1}F_s\{e^{\lambda p^2 t}p \hat{h}(\lambda p^2)\}
\end{equation*}%
for $x\mathbf{\in R}^{+},t>0$, (see Section \ref{Section 2}).

This paper is a continuation of the work carried in \cite{EsHaKa}  in which the inhomogeneous Dirichlet- boundary value problem has been considered and sufficient conditions are given to show asymptotic behavior of solutions have been presented.

The Cauchy problem for the cubic nonlinear Schr\"{o}dinger equations was
studied by many authors extensively, see \cite{Cazenave} and references
cited therein. On the other hand, there are some results on the initial
boundary value problem for nonlinear Schr\"{o}dinger equations with
homogeneous boundary conditions (see \cite{BrezisGal1980}, \cite{Ogawa1990}, 
\cite{OgawaOzawa1991}, \cite{Hayashi89-1}, \cite{Hayashi89-2}, \cite%
{Hayashi90}, \cite{Hayashi94}, \cite{Y.Tsutsumi1983}). There are also some
results on the inhomogeneous boundary value problem, see \cite{Bu2000}, \cite%
{CarrollBu1991},  in one dimension and \cite{StraussBu2001}
in existence of weak solutions in general space dimension without uniqueness
of solutions.  However there are few results on the asymptotic behavior of
solutions in the case of inhomogeneous boundary value problem except , \cite{Kaikina} in which the weighted Sobolev spaces are used
to get global results for (\ref{1.1-1}) when $p=3$.

In this paper, we show
local in time existence results of solutions to (\ref{1.1-1}) which is not
stated in , explicitly. It seems that although local
existence of solutions in the usual Sobolev spaces for (\ref{1.1-1}) is
known but in the weighted Sobolev spaces are not well known.
Our another
purpose in this paper is to show the classical energy method \ and
factorization techniques works well for proofs of global existence in time
of solutions to (\ref{1.1-1}). We obtain estimates of Green operator,  which is a modification of the free Schrödinger evolution group, as follows
$$\B^{-1}\F_s e^{ip^2t}\F_s\mathcal B\phi ,$$
where $\F_s$ is a Fourier sine transform and operator $B = 1 + \alpha \partial_x$, morevoer we show that the operator $
J=x+it\partial _{x}$ works well to inhomogeneous cases which is not shown
before and our results recover the previous results obtained in  and the decay conditions on the boundary data are improved due
to more regularity conditions on boundary data. We note that there are also
some results in one dimension using inverse scattering techniques, \cite%
{Focas}, \cite{Fokas 2005}.We note that there are also some results in one
dimension using inverse scattering techniques, \cite{Focas}, \cite{Fokas
2005}. For example, in paper \cite{Fokas2018} local well-posedness of the
initial boundary value problem NLS with data ($u_{0}(x),h(t)$) in $\left(
H_{x}^{s}(R^{+}),H_{t}^{2s+1}(0,T)\right) $ for $s>\frac{1}{2}$ was
established via the unified transform method and a contraction mapping
approach..

We first state a local existence in time of solutions.

\begin{theorem}
\label{Theorem 2}We assume that $p>2.$ Suppose 
\begin{equation*}
u_{0}\in \mathbf{H}^{1,0} \cap \mathbf{H}%
^{0,1} ,h\in \mathbf{C}^{2}\left( \left[ 0,T%
\right] \right) 
\end{equation*}%
and $u_{0}\left( 0\right) =h\left( 0\right) =\partial _{t}h\left( 0\right)
=0.$ Then there exists a time  $T$ $\leq $ $\left( C\rho +1\right) ^{-\frac{4%
}{3}}$ such that (\ref{1.1-1}) has a unique local solution 
\begin{equation*}
u\in \mathbf{C}\left( \left[ 0,T\right] ;\mathbf{H}^{1,0}\left( \mathbf{R}%
^{+}\right) \cap \mathbf{H}^{0,1} \right) ,
\end{equation*}%
where 
\begin{equation*}
\mathbf{H}^{1,0} =\left\{ \phi \in \mathbf{L}%
^{2} ;\left\Vert \phi \right\Vert _{\mathbf{H}%
^{1,0} }=\left\Vert \phi \right\Vert _{\mathbf{L}%
^{2} }+\left\Vert \partial _{x}\phi \right\Vert
_{\mathbf{L}^{2}}<\infty \right\} 
\end{equation*}%
and 
\begin{equation*}
\mathbf{H}^{0,1} =\left\{ \phi \in \mathbf{L}%
^{2} ;\left\Vert \phi \right\Vert _{\mathbf{H}%
1 }=\left\Vert \phi \right\Vert _{\mathbf{L}%
^{2} }+\left\Vert x\phi \right\Vert _{\mathbf{L}%
^{2} }<\infty \right\} .
\end{equation*}
\end{theorem}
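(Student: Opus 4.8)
The plan is to prove local existence via the standard contraction-mapping scheme applied to the Duhamel-type integral formulation of \eqref{1.1-1}, working in the space $\mathbf{C}([0,T];\mathbf{H}^{1,0}\cap\mathbf{H}^{0,1})$. Writing $u=w+z$ as in the introduction, I would first isolate the linear contributions: the initial-data part $U(t)u_0$ and the boundary part $z(t)$ built from $h$, both of which are controlled by the hypotheses $u_0\in\mathbf{H}^{1,0}\cap\mathbf{H}^{0,1}$ and $h\in\mathbf{C}^2([0,T])$ together with the compatibility conditions $u_0(0)=h(0)=\partial_t h(0)=0$. The compatibility conditions are exactly what is needed so that $z$ and its relevant derivatives vanish at $t=0$ and the map lands in the right space; I would verify that $\mathcal{B}^{-1}\mathcal{F}_s\{e^{i p^2 t}p\hat h(\lambda p^2)\}$ inherits enough regularity from the $\mathbf{C}^2$ assumption on $h$ to be bounded in $\mathbf{H}^{1,0}\cap\mathbf{H}^{0,1}$ uniformly on $[0,T]$.

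Next I would set up the nonlinear map. Define
\begin{equation*}
\Phi(u)(t)=U(t)u_0-i\int_0^t U(t-\tau)\,\lambda|u|^{p-1}u\,(\tau)\,d\tau+z(t),
\end{equation*}
and seek a fixed point in the closed ball $\{u:\ \|u\|_{X_T}\le\rho\}$ of the Banach space $X_T=\mathbf{C}([0,T];\mathbf{H}^{1,0}\cap\mathbf{H}^{0,1})$, where $\rho$ is chosen comparable to the size of the data. The two estimates I need are the standard $\mathbf{L}^2$-type bounds for the free group $U(t)$ on $\mathbf{H}^{1,0}$, and the commutation properties of the operators $\partial_x$ and $J=x+it\partial_x$ with $U(t)$, so that applying these operators to $\Phi(u)$ reduces to estimating them on the data and on the nonlinearity. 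The operator $J$ is the key device for controlling the $\mathbf{H}^{0,1}$ (weighted) norm, since $JU(t)=U(t)x$ in the free case and its analogue on the half-line lets me move the weight through the evolution.

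The crux is the nonlinear estimate: I must bound $\|J(\,|u|^{p-1}u)\|_{\mathbf{L}^2}$ and $\|\partial_x(|u|^{p-1}u)\|_{\mathbf{L}^2}$ in terms of $\|u\|_{X_T}$. Using the Leibniz rule, $\partial_x(|u|^{p-1}u)$ and $J(|u|^{p-1}u)$ produce factors of $|u|^{p-1}$ multiplying $\partial_x u$ or $Ju$, so by Hölder and the one-dimensional Gagliardo–Nirenberg/Sobolev embedding $\|u\|_{\mathbf{L}^\infty}\lesssim\|u\|_{\mathbf{H}^{1,0}}$ I expect a bound of the form $C\|u\|_{X_T}^{p-1}\|u\|_{X_T}$. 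The time integral $\int_0^t$ then contributes a factor $T$ (or a power of $T$ after accounting for the $t$-weight hidden in $J$), which is why the existence time takes the shape $T\le(C\rho+1)^{-4/3}$: the exponent $4/3$ tracks how the time factors from the Duhamel integral and from the $J$-operator's explicit $t$ combine against the data size. The condition $p>2$ is needed precisely so that $|u|^{p-1}$ is differentiable enough (the nonlinearity is $C^1$) for these Leibniz manipulations to be justified.

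The main obstacle I anticipate is handling the boundary term $z(t)$ and the half-line versions of the operator identities. On the whole line the commutation $JU(t)=U(t)x$ and the explicit factorization of $U(t)$ are classical, but here $U(t)$ is the sine-type kernel adapted to the Mixed boundary condition $\mathcal{B}u=1\cdot u+\alpha\partial_x u$, and $z(t)$ is defined through $\mathcal{B}^{-1}$ and the Fourier sine transform. I would therefore spend the bulk of the work establishing that $\mathcal{B}^{-1}$ and $\mathcal{F}_s$ are bounded on the relevant spaces and that $J$ interacts cleanly with the boundary construction, likely invoking the factorization identities developed in Section \ref{Section 2}; once those structural facts are in place, the contraction estimate $\|\Phi(u)-\Phi(v)\|_{X_T}\le\tfrac12\|u-v\|_{X_T}$ for small $T$ follows from the same Hölder–Sobolev scheme applied to the difference of nonlinearities, and the Banach fixed-point theorem yields the unique local solution.
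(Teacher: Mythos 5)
Your overall skeleton (split off $z$, iterate, close with a fixed point) resembles the paper's, but the step you defer as a ``structural fact'' --- that $\partial_x$ and $J$ commute, or ``interact cleanly,'' with the half-line group $U(t)=\mathcal{B}^{-1}\mathcal{F}_s e^{\frac{i}{2}p^2t}\mathcal{F}_s\mathcal{B}$ --- is precisely where the argument breaks, and it is not how the paper proves Theorem \ref{Theorem 2}. On the half line, moving a derivative or a weight through the sine transform costs a boundary trace: $p\,\mathcal{F}_s\psi=\mathcal{F}_c\partial_x\psi+\psi(0)$, so $U(t-\tau)$ enjoys $\mathbf{H}^{1,0}$ and weighted bounds only on functions satisfying the compatibility condition $\mathcal{B}\psi(0)=0$. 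The Duhamel integrand $f(\tau)=\lambda|u|^{p-1}u(\tau)$ does not satisfy $\mathcal{B}f(\tau,0)=0$ (the solution itself obeys $\mathcal{B}u(\tau,0)=h(\tau)\neq 0$), so your estimates of $\partial_x\Phi(u)$ and $J\Phi(u)$ do not reduce to estimates of $\partial_xf$ and $Jf$; they generate boundary terms that your scheme never sees. There is no clean analogue of $JU(t)=U(t)x$ here (note also $[x,\mathcal{B}]=-\alpha\neq 0$), and asserting one cannot be repaired by more care --- handling these boundary contributions \emph{is} the content of the theorem on a half line.

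The paper's proof avoids commuting anything through $U(t)$. It runs a Picard iteration on the linearized problem (\ref{2.2}), splits each iterate as $w+z$, and estimates $w$ by the classical energy method: one differentiates the equation in $x$, $t$, and applies $J$, then integrates by parts so that every boundary contribution appears explicitly (see (\ref{2.5})--(\ref{2.91}) and (\ref{2.13})--(\ref{2.15})), while $z$ is given by an explicit kernel (Lemma \ref{Lemma nonhom}) whose traces $z(t,0)$, $\partial_xz(t,0)$, $\partial_tz(t,0)$ are computed and bounded by $C\rho^{p}\sqrt{t}$. Controlling those boundary terms is exactly why the iteration space $\mathbf{X}_T$ contains $\left\Vert\partial_t\phi\right\Vert_{\mathbf{L}^2}$ and $\left\Vert\partial_x^2\phi\right\Vert_{\mathbf{L}^2}$ in addition to the norms you use: the $Jw$ estimate (\ref{2.15}) requires $\partial_x^2w(\tau,0)$, and the $z$ estimates require $\partial_tz(\tau,0)$. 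A ball in $\mathbf{C}([0,T];\mathbf{H}^{1,0}\cap\mathbf{H}^{0,1})$ carries no information about these quantities, so your contraction cannot close in that space even if you import the paper's energy identities. Repairing your argument means (i) enlarging the space to the paper's $\mathbf{X}_T$ and (ii) replacing the claimed commutation identities by the explicit boundary-term computations --- at which point you have reproduced the paper's proof rather than found an alternative to it.
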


Next result says global in time existence and time decay estimates of small
solutions.

\begin{theorem}
\label{Theorem 4}We assume that $\lambda \in \mathbf{R,}$ $p=3$. Suppose 
\begin{equation*}
u_{0}\in \mathbf{H}^{1,0} \cap \mathbf{H}%
^{0,1} ,h\in \mathbf{C}^{2}\left( \left[ 0,T%
\right] \right)
\end{equation*}
and $u_{0}\left( 0\right) =h\left( 0\right) =\partial _{t}h\left( 0\right)
=0.$ Then there exists an $\varepsilon >0$ such that (\ref{1.1-1}) has a
unique global solution 
\begin{equation*}
u\in \mathbf{C}\left( \left[ 0,\infty \right) ;\mathbf{H}^{1,0}\left( 
\mathbf{R}^{+}\right) \cap \mathbf{H}^{0,1}
\right)
\end{equation*}
and time decay estimate 
\begin{equation*}
\left\| u\left( t\right) \right\| _{\mathbf{L}^{\infty }\left( \mathbf{R}%
^{+}\right) \cap \mathbf{C}\left( \left[ 0,\infty \right) \right) }\leq
C\left\langle t\right\rangle ^{-\frac{1}{2}}
\end{equation*}
for any data satisfying 
\begin{equation*}
\left\| u_{0}\right\| _{\mathbf{H}^{1,0} \cap 
\mathbf{H}^{0,1} }\leq \varepsilon
\end{equation*}
and 
\begin{equation*}
\left| h\left( t\right) \right| \leq \varepsilon \left\langle t\right\rangle
^{-\frac{3}{4}-\gamma },\left| \partial _{t}h\left( t\right) \right| \leq
\varepsilon \left\langle t\right\rangle ^{-\frac{7}{4}-\gamma },\left|
\partial _{t}^{2}h\left( t\right) \right| \leq \varepsilon \left\langle
t\right\rangle ^{-1-\gamma },
\end{equation*}
for any positive $\gamma \geq \varepsilon ^{\frac{1}{3}}.$
\end{theorem}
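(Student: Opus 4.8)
The plan is to globalize the local solution produced by the preceding local-existence theorem through a continuation argument driven by a single a priori bound. I would measure the solution in
\[
\|u\|_{X_{T}}:=\sup_{0\le t\le T}\Big(\|u(t)\|_{\mathbf{L}^{2}}+\|\partial_{x}u(t)\|_{\mathbf{L}^{2}}+\langle t\rangle^{-\gamma}\|Ju(t)\|_{\mathbf{L}^{2}}+\langle t\rangle^{\frac12}\|u(t)\|_{\mathbf{L}^{\infty}}\Big),
\]
with $J=x+it\partial_{x}$, and show that the working hypothesis $\|u\|_{X_{T}}\le C\varepsilon$ self-improves to $\|u\|_{X_{T}}\le\frac12 C\varepsilon$; by the local theory this forbids blow-up of the $\mathbf{H}^{1,0}\cap\mathbf{H}^{0,1}$ norm and extends the solution to $[0,\infty)$. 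Throughout I use the splitting $u=w+z$ from the Introduction. The boundary piece $z=\frac{i}{\pi}\mathcal{B}^{-1}\F_{s}\{e^{\lambda p^{2}t}p\,\hat h(\lambda p^{2})\}$ is explicit, so I would estimate its $\mathbf{L}^{2}$, $\partial_{x}$, $J$ and $\mathbf{L}^{\infty}$ contributions directly from the hypotheses on $h,\partial_{t}h,\partial_{t}^{2}h$; the exponents $-\frac34-\gamma$, $-\frac74-\gamma$, $-1-\gamma$ are calibrated precisely so that each of these contributions is consistent with the corresponding weight in $\|\cdot\|_{X_{T}}$, and the compatibility $h(0)=\partial_{t}h(0)=0$ removes the singular behaviour of $z$ near $t=0$.

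For the remaining piece I would run energy estimates, exploiting $\lambda\in\mathbf{R}$ so that the cubic nonlinearity is gauge invariant and the $\mathbf{L}^{2}$ norm is conserved up to boundary terms at $x=0$. Multiplying the equation for $u$, for $\partial_{x}u$, and for $Ju$ by the respective conjugates and integrating by parts produces, after taking imaginary parts, differential inequalities of the form $\frac{d}{dt}\|\,\cdot\,\|_{\mathbf{L}^{2}}^{2}\lesssim \|u\|_{\mathbf{L}^{\infty}}^{2}\|\,\cdot\,\|_{\mathbf{L}^{2}}^{2}+(\text{boundary})$. The two commutator facts that make this work are $[L,J]=0$ (so that $L(Ju)=\lambda\,J(|u|^{2}u)$) together with the derivation identity $J(|u|^{2}u)=2|u|^{2}Ju-u^{2}\overline{Ju}$, which shows that $Ju$ and $\partial_{x}u$ solve linear Schr\"odinger equations with potential $|u|^{2}$. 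Inserting the bootstrap decay $\|u\|_{\mathbf{L}^{\infty}}^{2}\lesssim \varepsilon^{2}\langle t\rangle^{-1}$ and applying Gronwall yields $\|Ju(t)\|_{\mathbf{L}^{2}}\lesssim \varepsilon\langle t\rangle^{C\varepsilon^{2}}$, while $\|u\|_{\mathbf{L}^{2}}$ and $\|\partial_{x}u\|_{\mathbf{L}^{2}}$ stay of size $\varepsilon$; the condition $\gamma\ge\varepsilon^{1/3}$ guarantees that the weight $\langle t\rangle^{-\gamma}$ dominates this slow self-interaction growth.

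The sharp $\langle t\rangle^{-1/2}$ decay is the factorization step. Writing the modified free group in the factored form $\mathcal{B}^{-1}\F_{s}e^{ip^{2}t}\F_{s}\mathcal{B}$ and using $MDF M$ with $M=e^{ix^{2}/2t}$, one obtains both the lossy dispersive bound $\|u(t)\|_{\mathbf{L}^{\infty}}\lesssim t^{-\frac12}\|u(t)\|_{\mathbf{L}^{2}}^{1/2}\|Ju(t)\|_{\mathbf{L}^{2}}^{1/2}$ (used only to feed the energy inequalities above) and the refined asymptotic representation $u(t)\approx (it)^{-1/2}M\,\hat v(x/t)$ for the profile $v=U(-t)u$, with an $\mathbf{L}^{\infty}$ remainder of size $t^{-\frac12-\kappa}$ controlled by the slowly growing $\|Ju\|_{\mathbf{L}^{2}}$. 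The profile then satisfies $i\partial_{t}\hat v=\lambda t^{-1}|\hat v|^{2}\hat v+(\text{error})$, and since $\lambda$ is real the leading coefficient is purely imaginary, so $\partial_{t}|\hat v|^{2}$ is of lower order and $\|\hat v(t)\|_{\mathbf{L}^{\infty}}$ remains bounded; this gives $\|u(t)\|_{\mathbf{L}^{\infty}}\le C\langle t\rangle^{-1/2}$ with no logarithmic loss and closes the $\mathbf{L}^{\infty}$ weight. I expect the principal obstacle to be the boundary terms at $x=0$: the operator $J$ does not preserve the Robin condition $\mathcal{B}u(t,0)=h(t)$, so the integration-by-parts in the $Ju$ energy estimate generates traces of $u$ and $\partial_{x}u$ at the boundary that are not directly controlled by the boundary data. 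Handling these — by commuting $J$ through $\mathcal{B}$, using the equation to trade $\partial_{x}$-traces for $\partial_{t}$-traces of $h$, and absorbing the result via the assumed decay of $\partial_{t}h$ and $\partial_{t}^{2}h$ — is the step that must be carried out with care and is what distinguishes the present inhomogeneous argument from the Cauchy-problem case.
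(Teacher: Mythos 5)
Your high-level architecture (split $u=w+z$, bootstrap in a weighted norm built on $J$, factorization of the modified group plus a profile ODE exploiting $\lambda\in\mathbf{R}$) is the same as the paper's, but your bootstrap norm is calibrated wrongly, and the step you postpone is the actual proof. First, the quantitative error: you claim Gronwall gives $\left\Vert Ju(t)\right\Vert_{\mathbf{L}^{2}}\lesssim\varepsilon\langle t\rangle^{C\varepsilon^{2}}$ and accordingly weight $\left\Vert Ju\right\Vert_{\mathbf{L}^{2}}$ by $\langle t\rangle^{-\gamma}$. That is the Cauchy-problem heuristic and it fails here, because the growth of the $J$-norm is driven by the boundary data, not by the nonlinear self-interaction. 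The energy identity for $Jz$ (the paper's \eqref{2.14}, \eqref{4.5}) has the boundary source $|\tau z(\tau,0)+2i\tau^{2}\partial_{\tau}z(\tau,0)|\,|\partial_{x}z(\tau,0)|$; under the stated hypotheses on $h$ one gets $|z(t,0)|\lesssim\varepsilon\langle t\rangle^{-\frac14-\gamma}$ and $|\partial_{t}z(t,0)|,|\partial_{x}z(t,0)|\lesssim\varepsilon\langle t\rangle^{-\frac54-\gamma}$, so the source is of size $\varepsilon^{2}\langle\tau\rangle^{-\frac12-2\gamma}$ and $\left\Vert Jz(t)\right\Vert_{\mathbf{L}^{2}}^{2}\lesssim\varepsilon^{2}\langle t\rangle^{\frac12-2\gamma}$, i.e.\ polynomial growth $t^{\frac14-\gamma}$ (the same growth appears in $\left\Vert xz\right\Vert_{\mathbf{L}^{2}}$, cf.\ \eqref{4.20.0}); no smallness of $\varepsilon$ removes it. Hence $\langle t\rangle^{-\gamma}\left\Vert Ju(t)\right\Vert_{\mathbf{L}^{2}}$ is not uniformly bounded and your continuation argument cannot close as stated; this is precisely why the paper's space weights $\left\Vert Ju\right\Vert_{\mathbf{L}^{2}}^{2}$ by $\langle t\rangle^{-\frac12+2\gamma}$. (A secondary inconsistency of the same kind: your unweighted $\left\Vert\partial_{x}u\right\Vert_{\mathbf{L}^{2}}$ component cannot stay of size $\varepsilon$ either, since Gronwall itself produces $t^{C\varepsilon^{2}}$ growth.) With the correct growth $\left\Vert Ju\right\Vert_{\mathbf{L}^{2}}\sim t^{\frac14-\gamma}$, the lossy dispersive bound yields only $t^{-\frac38}$ decay, so the profile-ODE step is indispensable, and its error terms must be re-verified against this growth rather than against $t^{C\varepsilon^{2}}$.

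Second, the boundary traces you defer are not a technicality that can be handled by ``trading $\partial_{x}$-traces for $\partial_{t}$-traces of $h$.'' For the homogeneous-Robin part $w$, the $J$-energy identity \eqref{2.13}, \eqref{2.15} produces the terms $i\tau w(\tau,0)\overline{\partial_{x}w(\tau,0)}$ and $\tau^{2}\partial_{x}^{2}w(\tau,0)\overline{\partial_{x}w(\tau,0)}$; using the Robin condition and the equation, everything reduces to the Dirichlet trace $w(\tau,0)$, which is an unknown of the problem and is not expressible through $h$ by any algebraic manipulation. The paper's Lemma 4.1 spends most of its proof establishing the decay $w(t,0)=O(t^{-\frac34})\left\Vert u_{0}\right\Vert_{\mathbf{H}^{0,1}}+O(t^{-\frac14})\left\Vert u\right\Vert_{\mathbf{X}_{T}}^{3}$ via the factorized representation of $U(t)=\mathcal{B}^{-1}\mathcal{F}_{s}e^{\frac{i}{2}p^{2}t}\mathcal{F}_{s}\mathcal{B}$, the Duhamel formula, and the estimate $\left\Vert JU(t)\phi\right\Vert_{\mathbf{L}^{2}}\leq C\left\Vert\phi\right\Vert_{\mathbf{H}^{0,1}}$ of \eqref{KR} (equations \eqref{k0}--\eqref{k3}, \eqref{4.16}); without this boundary-decay estimate the Gronwall step for $\left\Vert\partial_{x}w\right\Vert_{\mathbf{L}^{2}}$ and $\left\Vert Jw\right\Vert_{\mathbf{L}^{2}}$ has an uncontrolled source. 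Likewise, your profile ODE is written for $v=U(-t)u$, but $\mathcal{F}_{s}\mathcal{B}U(-t)$ converts the cubic term into $\lambda t^{-1}|\hat v|^{2}\hat v$ only when applied to a function satisfying the homogeneous Robin condition; the paper must first substitute $u=y+e^{-x}h(t)$ with $\mathcal{B}y(t,0)=0$, and then estimate the boundary source $G=i\partial_{t}\mathcal{F}_{s}\mathcal{B}U(-t)z$ (which is only conditionally integrable in time, via oscillation) and the commutator remainders $R,R_{1},R_{2},R_{3}$. These omitted steps are not routine care; they are where the inhomogeneous Robin problem differs from the Cauchy problem, and they constitute the substance of the paper's proof.
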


\begin{theorem}
\label{Theorem 6}Let $u\left( t,x\right) $ be the solution constructed in
Theorem \ref{Theorem 4}. Then for any data $\left( u_{0},h\left( t\right)
\right) $, there exists a unique function $\Psi \in \mathbf{L}^{\infty
} \cap \mathbf{C}\left( \left[ 0,\infty \right)
\right) $ such that $\Psi \left( 0\right) =0$%
\begin{equation*}
u\left( t,x\right) =e^{\frac{i\left| x\right| ^{2}}{2t}}\frac{1}{\sqrt{it}}%
\Psi \left( \frac{x}{t}\right) e^{-i\lambda \left| \Psi _{+}\left( \frac{x}{t%
}\right) \right| ^{2}\log t}+O\left( \varepsilon ^{2}t^{-\varepsilon }\left(
1+\log t\right) \right) .
\end{equation*}
\end{theorem}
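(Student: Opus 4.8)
The plan is to study the large-time behaviour of the \emph{profile} obtained by undoing the free evolution inside $u$, following the factorization analysis already carried out for the Green operator $\B^{-1}\F_s e^{ip^2t}\F_s\B$. The starting point is the factorization of this modified evolution group in the form $M(t)\,D(t)\,\F_s\,\B$ up to a remainder that carries extra time decay, where $M(t)=e^{i|x|^2/(2t)}$ is multiplication by the oscillating phase and $D(t)\phi(x)=(it)^{-1/2}\phi(x/t)$ is the dilation producing the self-similar variable $\xi=x/t$. First I would introduce the time-dependent profile
\[
\varphi(t,\xi)=\big(\overline{M(t)}\,D(t)^{-1}u(t)\big)(\xi),
\]
so that $u(t,x)=e^{i|x|^2/(2t)}(it)^{-1/2}\varphi(t,x/t)$ modulo the factorization error. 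The uniform bounds on $\|u\|_{\esp^2}$, $\|\partial_x u\|_{\esp^2}$ and $\|Ju\|_{\esp^2}$ with $J=x+it\partial_x$ furnished by \thmref{4} (slow growth in $t$) translate into uniform control of $\varphi$ in $\esp^\infty$ together with a modulus of continuity in $\xi$; this regularity is precisely what is needed to estimate the factorization remainders below.

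Next I would derive the ordinary differential equation satisfied by $\varphi(t,\xi)$ for each fixed $\xi$. Substituting the factorized ansatz into $Lu=\lambda|u|^{p-1}u$ with $p=3$ and using $|u|^2=t^{-1}|\varphi|^2$ to leading order, the cubic term produces a resonant contribution and an error:
\[
i\,\partial_t\varphi(t,\xi)=\frac{\lambda}{t}\,|\varphi(t,\xi)|^2\,\varphi(t,\xi)+R(t,\xi).
\]
Here $R$ collects (i) the non-resonant oscillatory part of the nonlinearity, (ii) the factorization remainder coming from replacing the exact evolution by $M\,D\,\F_s\,\B$, and (iii) the contribution of the inhomogeneous boundary term $z(t)$. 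The decisive point is that each piece of $R$ gains an extra negative power of $t$: the oscillatory pieces gain through integration by parts in $\xi$, paid for by the $\|Ju\|_{\esp^2}$ bound, while the boundary piece is controlled by the hypotheses $|h|\le\varepsilon\langle t\rangle^{-3/4-\gamma}$ together with the bounds on $\partial_t h$ and $\partial_t^2 h$. I expect the bookkeeping of these remainders, and in particular obtaining the uniform-in-$\xi$ control that survives the passage to $\esp^\infty$, to be the main obstacle, the boundary contribution from $z$ being the feature absent from the classical whole-line argument.

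Finally I would solve the limiting equation. Since $\lambda\in\mathbf{R}$ and $p=3$, multiplying the ODE by $\overline{\varphi}$ and taking real parts gives $\partial_t|\varphi|^2=2\,\mathrm{Re}\big(\overline{\varphi}\,R\big)$, so $|\varphi(t,\xi)|^2$ is constant up to a time-integrable error; this defines the limit amplitude $\Psi_+(\xi)$ with $\big||\varphi(t,\xi)|^2-|\Psi_+(\xi)|^2\big|=O(\varepsilon^2 t^{-\varepsilon})$, the surplus decay and regularity encoded in $\gamma\ge\varepsilon^{1/3}$ yielding the net gain $t^{-\varepsilon}$. Feeding this back into the phase equation and integrating the resonant coefficient $\lambda t^{-1}$ produces the logarithmic correction,
\[
\varphi(t,\xi)=\Psi(\xi)\,e^{-i\lambda|\Psi_+(\xi)|^2\log t}+O\!\big(\varepsilon^2 t^{-\varepsilon}(1+\log t)\big),
\]
where $\Psi$ is the limit profile (coinciding in modulus with $\Psi_+$ since the amplitude is conserved). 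Undoing the factorization, i.e. writing $u(t,x)=e^{i|x|^2/(2t)}(it)^{-1/2}\varphi(t,x/t)$ and absorbing the factorization remainder into the same error class, yields exactly the claimed asymptotic formula; the continuity of $\Psi$ on $[0,\infty)$ and the property $\Psi(0)=0$ follow from the corresponding properties of $\varphi$ together with the compatibility condition $u_0(0)=h(0)=0$ at the endpoint of the self-similar variable.
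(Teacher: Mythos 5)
Your overall skeleton---factorize the evolution, reduce to an ODE in $t$ for the profile, then run an amplitude/phase (integrating-factor) argument that produces the logarithmic phase correction---is the same as the paper's, and your treatment of the remainders (i) and (ii) matches what is done there (the paper recycles the Section~\ref{Section 4} estimates to get the error $O(\varepsilon^{3}t^{-1-\varepsilon})$). The genuine gap is in item (iii): you place ``the contribution of the inhomogeneous boundary term $z(t)$'' into a remainder $R$ that allegedly ``gains an extra negative power of $t$''. It does not. Write $u=w+z$ and let $\psi=\F_s\B U(-t)z$ be the boundary part of the profile. The paper computes in (\ref{4.18}) that $\psi(t,\xi)=\int_{0}^{t}i\xi e^{-i\xi^{2}\tau/2}h(\tau)\,d\tau=A(\xi)+B(t,\xi)$, where $A(\xi)=\int_{0}^{\infty}i\xi e^{-i\xi^{2}\tau/2}h(\tau)\,d\tau$ is \emph{time-independent and generically nonzero}, and only the tail $B$ decays, cf.\ (\ref{4.25}). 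So the boundary part of the profile does not tend to zero: it converges to $A$, which must be incorporated into the limit profile $\Psi_{+}$. Worse, what your $R$ actually contains is the time derivative $G=i\partial_{t}\psi=-\xi e^{-i\xi^{2}t/2}h(t)$, and under the hypotheses of Theorem~\ref{Theorem 4} this is only $O(|\xi|\,\varepsilon\langle t\rangle^{-3/4-\gamma})$ with $\gamma\geq\varepsilon^{1/3}$ allowed to be far below $1/4$; this is not absolutely integrable in time (and grows in $\xi$). Consequently your key steps---``$|\varphi(t,\xi)|^{2}$ is constant up to a time-integrable error'' and the Cauchy-in-time argument giving $\Psi_{+}$---both fail as stated.

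The paper closes exactly this gap by never differentiating $\psi$: the unknown of the ODE is taken to be $\varphi+A$, where $\varphi=\F_s\B U(-t)w$ is the profile of the homogeneous part, so that $\partial_{t}A=0$ contributes nothing, while $B$ enters only through the cubic nonlinearity, where terms of the type $t^{-1}|B|\,|\varphi+A|^{2}$ are time-integrable by (\ref{4.25}). This yields (\ref{4.26}), namely $i\partial_{t}(\varphi+A)=\lambda t^{-1}|\varphi+A|^{2}(\varphi+A)+O(\varepsilon^{3}t^{-1-\varepsilon})$, after which your amplitude/phase analysis goes through essentially verbatim and gives the stated error $O(\varepsilon^{2}t^{-\varepsilon}(1+\log t))$. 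Since your full profile equals $(\varphi+A)+B$ with $B\to0$, the limiting object would be the same; what is missing from your write-up is the extraction of the stationary part $A$ of the boundary contribution (equivalently, an integration by parts in time exploiting the oscillation $e^{-i\xi^{2}t/2}$ in $G$, which reconstructs $A+B$). Without that step the proof does not close.
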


From the above theorem we find that the asymptotic behavior of solutions is
the same as solutions to the homogeneous boundary value problem. Namely the
boundary data are not effective in the asymptotics of solutions. As we will show
below, our conditions on the boundary data imply z (t) decays in time in the same
order as that of w (t).
In the following theorem, we consider the cases that the order of nonlinearity
$p > 3$ and the weaker time decay condition on the boundary data.

We organize our paper as follows. In Section \ref{Section 1}, we state time
decay estimates through the operator $J=x+it\partial _{x}.$ Section \ref%
{Section 2} is devoted to the proof of local existence Theorem \ref{Theorem
2}. Theorem \ref{Theorem 4} is shown in Section \ref{Section 4} by showing
a-priori estimates of local solutions obtained in Theorem \ref{Theorem 2}.
Finally in Section \ref{Section 5}, we give the proof of Theorem \ref%
{Theorem 6}.

\begin{theorem}
\label{Theorem 7}We assume that $\lambda \in \mathbf{C,}$ $p>3,\partial
_{x}u_{0}\left( 0\right) =h\left( 0\right) =0$ and 
\begin{equation*}
\left| h\left( t\right) \right| \leq \varepsilon \left\langle t\right\rangle
^{-\beta },\left| \partial _{t}h\left( t\right) \right| \leq \varepsilon
\left\langle t\right\rangle ^{-1-\beta },\frac{1}{2}+\frac{1}{p-1}\leq \beta
<1.
\end{equation*}
Then there exists an $\varepsilon >0$ such that (\ref{1.1-1}) has a unique
global solution 
\begin{equation*}
u\in \mathbf{C}\left( \left[ 0,\infty \right) ;\mathbf{H}^{2,0}\left( 
\mathbf{R}^{+}\right) \cap \mathbf{H}^{0,1}
\right)
\end{equation*}
and time decay estimate 
\begin{equation*}
\left\| u\left( t\right) \right\| _{\mathbf{L}^{\infty }\left( \mathbf{R}%
^{+}\right) \cap \mathbf{C}\left( \left[ 0,\infty \right) \right) }\leq
C\left\langle t\right\rangle ^{\frac{1}{2}-\beta }
\end{equation*}
for any data satisfying 
\begin{equation*}
0<\left\| u_{0}\right\| _{\mathbf{H}^{2,0} \cap 
\mathbf{H}^{0,1} }\leq \varepsilon .
\end{equation*}
\end{theorem}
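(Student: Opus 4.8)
The plan is to combine a local existence result in the higher-regularity class $\mathbf{H}^{2,0}\cap\mathbf{H}^{0,1}$ (the analogue of Theorem~\ref{Theorem 2}, but with one extra spatial derivative, which is forced by $p>3$) with an a priori estimate, and then to extend the local solution to all times by a continuation argument. I would first recast (\ref{1.1-1}) in the integral form $u=w+z$ recorded in the Introduction, with $w(t)=U(t)u_0-i\int_0^t U(t-\tau)f(\tau)\,d\tau$ carrying the initial data and the nonlinearity, and $z(t)$ carrying the boundary data $h$ through the Green operator $\B^{-1}\F_s e^{ip^2t}\F_s\B$. Since $p>3$ the nonlinearity is effectively a perturbation, so the decay rate $\langle t\rangle^{1/2-\beta}$ is dictated by the boundary contribution $z$; the whole argument is organised around the weighted norm
\begin{equation*}
\|u\|_X=\sup_{0\le t\le T}\Big(\|u(t)\|_{\mathbf{H}^{2,0}}+\langle t\rangle^{2(\beta-1)}\|Ju(t)\|_{\mathbf{L}^2}+\langle t\rangle^{\beta-\frac12}\|u(t)\|_{\mathbf{L}^\infty}\Big),
\end{equation*}
the goal being to show $\|u\|_X\lesssim\varepsilon$ uniformly in $T$ once the data are of size $\varepsilon$.

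The first substantive step is the analysis of the boundary term $z$. Using the explicit representation together with the factorization of the Green operator, I would integrate by parts in time to transfer one time-derivative onto $h$, so that the hypotheses $|h(t)|\le\varepsilon\langle t\rangle^{-\beta}$ and $|\partial_t h(t)|\le\varepsilon\langle t\rangle^{-1-\beta}$ can be inserted, and then apply a stationary-phase estimate to the oscillatory factor $\F_s e^{ip^2t}\F_s$. This should yield $\|z(t)\|_{\mathbf{L}^\infty}\lesssim\varepsilon\langle t\rangle^{1/2-\beta}$ together with the matching $\mathbf{H}^{2,0}$ bound and the $J$-weighted bound $\|Jz(t)\|_{\mathbf{L}^2}\lesssim\varepsilon\langle t\rangle^{2(1-\beta)}$; it is here that the exponent $1/2-\beta$ first appears and that the compatibility conditions $h(0)=0$ and $\partial_x u_0(0)=0$ are used to annihilate the traces at $x=0$.

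For the nonlinear part I would run the classical energy method and the operator $J$ in parallel. The $\mathbf{H}^{2,0}$ estimate comes from differentiating the equation twice, pairing with $\partial_x^2\bar u$ in $\mathbf{L}^2$, and using the boundary condition $\B u(t,0)=h(t)$ to handle the boundary contributions at $x=0$; the nonlinear term then contributes $\lesssim\|u(t)\|_{\mathbf{L}^\infty}^{p-1}\|u(t)\|_{\mathbf{H}^{2,0}}^2$. Since $[L,J]=0$, the function $Ju$ satisfies the same equation with source $Jf$ (up to a commutator of $J$ with $\B$ at the boundary), and the gauge invariance of $|u|^{p-1}u$ gives $\|Jf\|_{\mathbf{L}^2}\lesssim\|u\|_{\mathbf{L}^\infty}^{p-1}\|Ju\|_{\mathbf{L}^2}$. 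In both estimates the decay $\|u(t)\|_{\mathbf{L}^\infty}\lesssim\langle t\rangle^{1/2-\beta}$ produces a coefficient $\lesssim\varepsilon^{p-1}\langle t\rangle^{(p-1)(1/2-\beta)}$, and the hypothesis $\beta\ge\frac12+\frac1{p-1}$ is precisely what makes $(p-1)(\beta-\frac12)\ge1$, so this coefficient is $\lesssim\varepsilon^{p-1}\langle t\rangle^{-1}$; Gronwall then costs only a small power $\langle t\rangle^{C\varepsilon^{p-1}}$. Feeding the resulting bound for $\|Ju(t)\|_{\mathbf{L}^2}$ into the factorization inequality $\|u(t)\|_{\mathbf{L}^\infty}\lesssim\langle t\rangle^{-1/2}\|u(t)\|_{\mathbf{L}^2}^{1/2}\|Ju(t)\|_{\mathbf{L}^2}^{1/2}$ (with $\|u(t)\|_{\mathbf{L}^2}$ controlled by $\|u\|_{\mathbf{H}^{2,0}}$) recovers the $\mathbf{L}^\infty$ decay, which closes the bootstrap and gives $\|u\|_X\lesssim\varepsilon$.

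I expect the main obstacle to be the incompatibility of both $J$ and the second-order energy identity with the mixed boundary operator $\B=1+\alpha\partial_x$: neither commutes with the boundary condition, so every integration by parts on the half-line generates traces at $x=0$ that are not directly controlled and must be re-expressed through $h,\partial_t h$ and the structure of $\B^{-1}$. The second delicate point is the borderline case $\beta=\frac12+\frac1{p-1}$, where the nonlinear coefficient is only logarithmically non-integrable; there the Gronwall step has to be arranged so that the loss is absorbed into the small factor $\varepsilon^{p-1}$ (equivalently into an arbitrarily small power $\langle t\rangle^{C\varepsilon^{p-1}}$), which is consistent with the claimed rate since $\beta<1$ leaves room below $\langle t\rangle^{1/2-\beta}$. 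Once these two points are settled, uniqueness and continuity in time follow from the contraction estimate of the local theory, and the continuation argument produces the global solution.
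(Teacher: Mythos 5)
Your framework (the splitting $u=w+z$, energy method plus the operator $J$, a bootstrap in which the borderline case $\beta=\frac12+\frac1{p-1}$ is absorbed into $\varepsilon^{p-1}$) matches the paper's general setup, but the step that actually produces the decay rate $\langle t\rangle^{\frac12-\beta}$ — the heart of the theorem — does not work as you propose. Your bootstrap norm postulates $\Vert Ju(t)\Vert_{\esp^2}\lesssim\langle t\rangle^{2(1-\beta)}$ together with a uniform $\mathbf{H}^{2,0}$ bound, and you recover the uniform decay from Lemma \ref{Lemma 1.1}, $\Vert u\Vert_{\esp^\infty}\lesssim t^{-\frac12}\Vert u\Vert_{\esp^2}^{\frac12}\Vert Ju\Vert_{\esp^2}^{\frac12}$. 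Neither ingredient survives scrutiny. The boundary forcing alone makes $\Vert Jz(t)\Vert_{\esp^2}$ grow like $t^{\frac54-\beta}$: by Theorem \ref{Theorem 8} one has $z(t,x)\approx At^{\frac12-\beta}\Lambda(xt^{-\frac12})$, hence $Jz=xz+it\partial_xz\approx At^{1-\beta}\left(\xi\Lambda(\xi)+i\Lambda'(\xi)\right)$ with $\xi=xt^{-\frac12}$, and the change of variables $x=\sqrt{t}\,\xi$ in the $\esp^2_x$ norm costs a further factor $t^{\frac14}$; the paper's own bound (\ref{6.1}) is even weaker, $\langle t\rangle^{\frac32-\beta}$. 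Since $\frac54-\beta>2(1-\beta)$ exactly when $\beta>\frac34$, your postulated $J$-bound cannot be propagated on the range $\beta\in(\frac34,1)$, which is covered by the hypothesis. Worse, even inserting the true growth into Lemma \ref{Lemma 1.1} gives only
\begin{equation*}
\left\Vert u(t)\right\Vert_{\esp^{\infty}}\lesssim t^{-\frac12}\,\varepsilon^{\frac12}\,\left(t^{\frac54-\beta}\right)^{\frac12}=\varepsilon^{\frac12}\,t^{\frac18-\frac{\beta}{2}},
\end{equation*}
and $\frac18-\frac{\beta}{2}>\frac12-\beta$ precisely when $\beta>\frac34$: the square-root interpolation inequality is intrinsically too lossy to yield the stated rate, no matter how the rest of the bootstrap is arranged.

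This is exactly why the paper does not use Lemma \ref{Lemma 1.1} at this point. Section \ref{Section 6} works with a modified norm containing $\langle t\rangle^{-\frac12+\beta}\Vert\F_s\B U(-t)\phi\Vert_{\esp^\infty}$ and invokes the factorization inequality (\ref{4.0-5}), in which the $J$-norm enters \emph{linearly} with prefactor $t^{-\frac34}$ — so that $t^{-\frac34}\cdot t^{\frac54-\beta}=t^{\frac12-\beta}$ — while the leading term is $t^{-\frac12}\Vert\F_s\B U(-t)u\Vert_{\esp^\infty}$. The whole proof then hinges on two estimates that have no counterpart in your proposal: the profile bound $\Vert\F_s\B U(-t)w\Vert_{\esp^\infty}\leq C\varepsilon\langle t\rangle^{1-\beta}$ of (\ref{7.1}), proved by splitting $u=y+e^{-x}h$, deriving the ODE-type equation (\ref{6.9}) for $\F_s\B U(-t)(y+e^{-x}h)$ and estimating the remainders $R_2$, $R_3$ and the boundary source $G$; and the companion bound (\ref{4.20}) for $\F_s\B U(-t)z$, obtained from the explicit representation of $z$ by integration by parts in time. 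Two further discrepancies: the energy norms are \emph{not} uniformly bounded in this problem — the paper's space $\mathbf{Y}$ explicitly allows the growth $\varphi(t)\langle t\rangle^{\gamma_1}$, with $\varphi(t)=\langle t\rangle^{\frac34-\beta}$ for $\beta<\frac34$ and $\gamma_1=\varepsilon^{\frac23(p-1)}$ in the borderline case, so your uniform $\mathbf{H}^{2,0}$ piece is not what the energy identities deliver (the traces at $x=0$ produce non-integrable contributions); and the boundary-trace estimates you defer as "the main obstacle" (the analogues of (\ref{3.13})--(\ref{3.15}), (\ref{4.16}), and the integration by parts on $\tau z(\tau,0)+2i\tau^2\partial_\tau z(\tau,0)$) constitute a substantive part of the argument rather than a technicality, since without them neither the $J$-estimates nor the profile equation can be closed.
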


Theorem \ref{Theorem 7} suggests us the time decay conditions on the
boundary data are effective to the time decay of solutions. Indeed we have

\begin{theorem}
\label{Theorem 8} Let $u$ be the time global solutions constructed in
Theorem \ref{Theorem 7}. We also assume that $\frac{1}{2}+\frac{1}{p-1}%
<\beta <1$ and for some constant $A$ 
\begin{equation*}
h\left( t\right) =A\frac{t}{(1+t)^{\beta +1}}+O\left( \frac{t}{\left\langle
t\right\rangle ^{2+\gamma }}\right) ,\gamma >0.
\end{equation*}
Then we have 
\begin{equation*}
u\left( t,x\right) =At^{\frac{1}{2}-\beta }\Lambda (xt^{-\frac{1}{2}%
})+O\left( \varepsilon t^{1-2\beta +\frac{1}{p-1}}\right) ,
\end{equation*}
where $\Lambda (\xi )\in L^{\infty }(R^{+})$ given by formula 
\begin{equation*}
\Lambda (\xi )=\frac{1}{i\sqrt{2i\pi }}\int_{0}^{1}e^{\frac{i\xi ^{2}}{2(1-y)%
}}\frac{1}{y^{\beta }\sqrt{1-y}}dy.
\end{equation*}
\end{theorem}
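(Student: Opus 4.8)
The plan is to start from the linear decomposition $u=w+z$ introduced in Section~\ref{Section 0}, in which $z$ is the explicit boundary contribution built from $h$ and $w=U(t)u_{0}-i\int_{0}^{t}U(t-\tau)f(\tau)\,d\tau$ carries the initial data and the nonlinearity. The heuristic governing the whole argument is that, under the hypothesis $\frac12+\frac1{p-1}<\beta<1$, the boundary part $z$ dictates the leading profile while $w$ is short-range and is swallowed by the remainder. Thus the proof splits into (i) showing that $w$ contributes only to the error and (ii) computing the precise self-similar asymptotics of $z$.

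For step (i) I would invoke the a~priori bounds of Theorem~\ref{Theorem 7}, namely $\|u(t)\|_{\mathbf L^{\infty}}\le C\langle t\rangle^{1/2-\beta}$ together with the global bound in $\mathbf H^{2,0}\cap\mathbf H^{0,1}$, and feed them into a Duhamel estimate of the nonlinear term. Using the time-decay estimates expressed through $J=x+it\partial_{x}$ from Section~\ref{Section 1}, and optimizing the split of the time integral $\int_{0}^{t}$ near the diagonal $\tau\simeq t$, one obtains that the nonlinear Duhamel term is $O(\varepsilon t^{1-2\beta+1/(p-1)})$; this is exactly the point at which the exponent $1/(p-1)$ enters and at which the condition $\beta>\frac12+\frac1{p-1}$ is used, since it is equivalent to $t^{1-2\beta+1/(p-1)}=o(t^{1/2-\beta})$. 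The free evolution $U(t)u_{0}$ of the initial data is likewise lower order (here $\beta<1$ is used) and is absorbed into the remainder.

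For step (ii) I would insert the hypothesis $h(\tau)=A\,\tau(1+\tau)^{-\beta-1}+O(\tau\langle\tau\rangle^{-2-\gamma})$ into the boundary representation of $z$, which in physical space is a Schr\"odinger boundary integral, schematically of the form $z(t,x)=c\,\mathcal B^{-1}\!\int_{0}^{t}(t-\tau)^{-1/2}e^{ix^{2}/(2(t-\tau))}h(\tau)\,d\tau$, and perform the self-similar change of variables $\tau=ty$, $\xi=xt^{-1/2}$. The phase becomes $e^{i\xi^{2}/(2(1-y))}$, the Jacobian and kernel produce $t^{1/2}(1-y)^{-1/2}$, and the leading large-time behaviour $h(ty)\sim A\,t^{-\beta}y^{-\beta}$ delivers the weight $y^{-\beta}$; collecting powers of $t$ gives precisely $A\,t^{1/2-\beta}\Lambda(\xi)$ with $\Lambda$ as stated. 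The subleading part of $h$, the replacement of $h(ty)$ by its large-argument asymptotic, and the action of $\mathcal B^{-1}=(1+\alpha\partial_{x})^{-1}$ (which reduces to the identity on the self-similar profile to leading order, its commutator being faster decaying) are each shown to contribute only to the remainder; here $\gamma>0$ and $0<\beta<1$ guarantee that the relevant $y$-integrals converge, the singularity $y^{-\beta}$ at $y=0$ being integrable.

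The main obstacle will be the uniform-in-$x$ control of the oscillatory boundary integral near the endpoint $\tau\to t$ (equivalently $y\to1$), where the kernel $(t-\tau)^{-1/2}$ is singular and the phase $e^{i\xi^{2}/(2(1-y))}$ oscillates rapidly for $x\neq0$. I would handle this by an integration-by-parts / van der Corput argument exploiting the non-stationarity of the phase, taking care that all bounds are uniform in $x\in\mathbf R^{+}$, so that the error holds in $\mathbf L^{\infty}(\mathbf R^{+})$ and the limiting profile satisfies $\Lambda\in L^{\infty}(R^{+})$. A secondary technical point is justifying the interchange of the operator $\mathcal B^{-1}$ (and the underlying $\mathcal F_{s}$) with the self-similar limit, which I would carry out through the factorization $U(t)=M(t)D(t)\mathcal F_{s}M(t)$ already employed for the dispersive estimates.
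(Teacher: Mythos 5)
Your step (i) is fine and is essentially what the paper does: the proof of Theorem~\ref{Theorem 8} simply quotes Theorem~\ref{Theorem 7}, which already gives $\left\Vert w\left( t\right) \right\Vert _{\mathbf{L}^{\infty }\left( \mathbf{R}^{+}\right) }\leq C\varepsilon t^{\frac{1}{2}-\beta -\delta }$ with $\delta =\beta -\frac{p+1}{2(p-1)}$, and $\frac{1}{2}-\beta -\delta =1-2\beta +\frac{1}{p-1}$ is exactly the error exponent in the statement; re-deriving this by Duhamel and the $J$-estimates is harmless but not needed.

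The genuine gap is in step (ii), precisely at the point you treat as "schematic". The solution $z$ is \emph{not} given by $c\,\mathcal{B}^{-1}\int_{0}^{t}(t-\tau )^{-1/2}e^{ix^{2}/(2(t-\tau ))}h(\tau )\,d\tau$. By Lemma~\ref{Lemma nonhom} (equivalently \eqref{bfz}),
\begin{equation*}
z(t,x)=\mathcal{B}^{-1}\left\{ \frac{1}{\sqrt{2i\pi }}\int_{0}^{t}e^{\frac{ix^{2}}{2(t-\tau )}}\frac{x}{(t-\tau )^{3/2}}\,h(\tau )\,d\tau \right\} =C\int_{0}^{t}h(\tau )\,I(t-\tau ,x)\,d\tau ,\qquad I(s,x)=\int_{\mathbb{R}}e^{ipx}e^{ip^{2}s}\frac{p}{1+i\alpha p}\,dp ,
\end{equation*}
so the true kernel carries an extra factor $\frac{x}{t-\tau }$ relative to yours, and in the self-similar variables $x=\xi \sqrt{t}$, $t-\tau =t(1-y)$ this factor equals $\frac{\xi }{\sqrt{t}(1-y)}$, i.e. it costs a full power $t^{-1/2}$. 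This makes your two simplifications mutually inconsistent: if $\mathcal{B}^{-1}$ really acted as the identity on the self-similar profile, then the change of variables applied to the true kernel would produce
\begin{equation*}
z(t,x)\approx A\,t^{-\beta }\,\xi \int_{0}^{1}e^{\frac{i\xi ^{2}}{2(1-y)}}\frac{y^{-\beta }}{(1-y)^{3/2}}\,dy ,
\end{equation*}
an amplitude $t^{-\beta }$ with a different profile — not the asserted $At^{\frac{1}{2}-\beta }\Lambda (\xi )$. The half power of $t$ and the shape of $\Lambda$ in the theorem are generated exactly by the interplay between the oscillatory kernel and the Robin symbol $\frac{p}{1+i\alpha p}$, and this is what the paper's Section~\ref{Section 7} actually computes: a stationary-phase expansion of $I(s,x)$ (see \eqref{01}) whose leading amplitude, in self-similar variables, contributes the $\alpha$-dependent factor $\frac{\xi }{2\sqrt{1-y}-i\alpha \xi }$ appearing in \eqref{0011}. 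Your proposal never performs this kernel analysis; it reaches the stated formula only because the answer was built into the postulated kernel. (A further warning that this is the crux: even in the paper, the profile derived in \eqref{0011} and at the end of Section~\ref{Section 7} retains that extra $\alpha$-dependent factor and so does not literally match the $\Lambda$ of the statement — the step you skipped is where all the difficulty, and the paper's own internal tension, resides. Also, with the correct kernel the endpoint $y\to 1$ singularity is $(1-y)^{-3/2}$, not $(1-y)^{-1/2}$, so the van der Corput argument you defer to is needed in a stronger form than you anticipate.)
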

\section{Preliminary estimates\label{Section 1}}

We begin this section by introducing the notation needed in this work. We use $\|\cdot\|_{\esp^p}$ to denote the $\esp^p(\mathbb R^+)$ norm. If necessary, we use subscript to inform which variable we are
concerned with.  The mixed norm $\esp ^p \mathbf L_T^{\infty}$ of $f(x,t)$ is defined as 
$$\|f\|_{\esp ^p \mathbf L_T^{\infty}}=\sup \limits_{t\in T}\|f(t)\|_{\esp^p}.$$

We need time decay estimate of solutions through the operator 
\begin{equation*}
J=x+it\partial _{x}=ite^{\frac{ix^{2}}{2t}}\partial _{x}e^{-\frac{ix^{2}}{2t}%
}.
\end{equation*}

\begin{lemma}
\label{Lemma 1.1}Let $\phi \in \mathbf{H}^{1,0}
\cap \mathbf{H}^{0,1} ,$ then we have 
\begin{equation*}
\left\| \phi \right\| _{\mathbf{L}^{\infty }}\leq \left\{ 
\begin{array}{c}
Ct^{-\frac{1}{2}}\left\| J\phi \right\| _{\mathbf{L}^{2}}^{\frac{1}{2}}\left\| \phi \right\| _{\mathbf{L}^{2} }^{\frac{1}{2}} \\ 
C\left\| \partial _{x}\phi \right\| _{\mathbf{L}^{2} }^{\frac{1}{2}}\left\| \phi \right\| _{\mathbf{L}^{2} }^{\frac{1}{2}}%
\end{array}
\right. .
\end{equation*}
\end{lemma}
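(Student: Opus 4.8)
The plan is to prove the second (time-independent) bound first as a one-dimensional Gagliardo--Nirenberg inequality on the half-line, and then to deduce the first bound from it by conjugating $\phi$ with the phase $e^{-\frac{ix^{2}}{2t}}$ and reusing the second bound. For the second estimate I would write, for $x\in\mathbf{R}^{+}$,
\begin{equation*}
\left\vert \phi (x)\right\vert ^{2}=-\int_{x}^{\infty }\partial _{y}\left\vert \phi (y)\right\vert ^{2}\,dy=-2\int_{x}^{\infty }\func{Re}\left( \overline{\phi }\,\partial _{y}\phi \right) dy,
\end{equation*}
which is legitimate because $\phi \in \mathbf{H}^{1,0}$ forces $\phi (y)\to 0$ as $y\to \infty $; integrating from $x$ towards $+\infty $ is what avoids any boundary contribution at $x=0$. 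By Cauchy--Schwarz this yields $\left\vert \phi (x)\right\vert ^{2}\leq 2\left\Vert \phi \right\Vert _{\mathbf{L}^{2}}\left\Vert \partial _{x}\phi \right\Vert _{\mathbf{L}^{2}}$ uniformly in $x$, hence $\left\Vert \phi \right\Vert _{\mathbf{L}^{\infty }}\leq C\left\Vert \partial _{x}\phi \right\Vert _{\mathbf{L}^{2}}^{1/2}\left\Vert \phi \right\Vert _{\mathbf{L}^{2}}^{1/2}$.

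For the first estimate I would exploit the factorization $J=ite^{\frac{ix^{2}}{2t}}\partial _{x}e^{-\frac{ix^{2}}{2t}}$ recorded just above the statement. Set $\psi :=e^{-\frac{ix^{2}}{2t}}\phi $. Since $\left\vert e^{-\frac{ix^{2}}{2t}}\right\vert =1$, multiplication by this phase is an isometry on both $\mathbf{L}^{2}$ and $\mathbf{L}^{\infty }$, so $\left\Vert \psi \right\Vert _{\mathbf{L}^{\infty }}=\left\Vert \phi \right\Vert _{\mathbf{L}^{\infty }}$ and $\left\Vert \psi \right\Vert _{\mathbf{L}^{2}}=\left\Vert \phi \right\Vert _{\mathbf{L}^{2}}$. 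The product rule gives $\partial _{x}\psi =e^{-\frac{ix^{2}}{2t}}\left( \partial _{x}-\frac{ix}{t}\right) \phi $, so that $ite^{\frac{ix^{2}}{2t}}\partial _{x}\psi =(x+it\partial _{x})\phi =J\phi $, whence $\left\Vert \partial _{x}\psi \right\Vert _{\mathbf{L}^{2}}=t^{-1}\left\Vert J\phi \right\Vert _{\mathbf{L}^{2}}$. Applying the already-proved second estimate to $\psi $ and substituting these identities gives
\begin{equation*}
\left\Vert \phi \right\Vert _{\mathbf{L}^{\infty }}=\left\Vert \psi \right\Vert _{\mathbf{L}^{\infty }}\leq C\left\Vert \partial _{x}\psi \right\Vert _{\mathbf{L}^{2}}^{1/2}\left\Vert \psi \right\Vert _{\mathbf{L}^{2}}^{1/2}=Ct^{-\frac{1}{2}}\left\Vert J\phi \right\Vert _{\mathbf{L}^{2}}^{1/2}\left\Vert \phi \right\Vert _{\mathbf{L}^{2}}^{1/2},
\end{equation*}
which is the first bound.

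The argument is essentially soft, and there is no genuinely hard estimate. The only points requiring care are the verification of the factorization identity via the product rule, and the check that $\psi $ inherits enough regularity for the Gagliardo--Nirenberg step. I expect the latter to be the main (minor) obstacle: one must confirm that $\partial _{x}\psi \in \mathbf{L}^{2}$, which by the formula $\partial _{x}\psi =e^{-\frac{ix^{2}}{2t}}\left( \partial _{x}-\frac{ix}{t}\right) \phi $ amounts exactly to $\partial _{x}\phi \in \mathbf{L}^{2}$ (from $\phi \in \mathbf{H}^{1,0}$) together with $x\phi \in \mathbf{L}^{2}$ (from $\phi \in \mathbf{H}^{0,1}$). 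Thus the hypothesis $\phi \in \mathbf{H}^{1,0}\cap \mathbf{H}^{0,1}$ is precisely what guarantees the right-hand side of the time-decay bound is finite, closing the estimate.
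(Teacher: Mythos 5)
Your proposal is correct and takes essentially the same approach as the paper: both arguments rest on writing $\left\vert \phi (x)\right\vert ^{2}$ as an integral of $\partial _{x}\left\vert \cdot \right\vert ^{2}$ over $(x,\infty )$, applying Cauchy--Schwarz, and exploiting the factorization $J=ite^{\frac{ix^{2}}{2t}}\partial _{x}e^{-\frac{ix^{2}}{2t}}$. The only difference is bookkeeping --- the paper inserts the unimodular phase directly into the integration-by-parts computation to get the $t^{-1/2}$ estimate first (asserting the second bound follows ``in the same way''), whereas you prove the plain Gagliardo--Nirenberg bound first and transfer it to the conjugated function $\psi =e^{-\frac{ix^{2}}{2t}}\phi $, a slightly cleaner ordering that also sidesteps the paper's harmless miswriting of the phase as $e^{-\frac{ix^{2}}{4t}}$.
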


\begin{proof}
By integration by parts, we find that 
\begin{eqnarray*}
\left| \phi \left( x\right) \right| ^{2} &=&-\int_{x}^{\infty }\partial
_{x}\left| \phi \left( x\right) \right| ^{2}dx=-\int_{x}^{\infty }\partial
_{x}\left| e^{-\frac{ix^{2}}{4t}}\phi \left( x\right) \right| ^{2}dx \\
&=&-\int_{x}^{\infty }\partial _{x}e^{-\frac{ix^{2}}{4t}}\phi \left(
x\right) \cdot \overline{e^{-\frac{ix^{2}}{4t}}\phi \left( x\right) }+e^{-%
\frac{ix^{2}}{4t}}\phi \left( x\right) \cdot \overline{\partial _{x}e^{-%
\frac{ix^{2}}{4t}}\phi \left( x\right) }dx \\
&=&\frac{1}{it}\int_{x}^{\infty }J\phi \left( x\right) \cdot \overline{e^{-%
\frac{ix^{2}}{4t}}\phi \left( x\right) }-e^{-\frac{ix^{2}}{4t}}\phi \left(
x\right) \cdot \overline{J\phi \left( x\right) }dx.
\end{eqnarray*}
Hence we have 
\begin{equation*}
\left\| \phi \right\| _{\mathbf{L}^{\infty }}\leq Ct^{-\frac{1}{2}}\left\| J\phi \right\| _{\mathbf{L}^{2} }^{\frac{1}{2}}\left\| \phi \right\| _{\mathbf{L}^{2} }^{\frac{1}{2}}.
\end{equation*}
In the same way we also have the second estimate.
\end{proof}

\begin{lemma}
\label{Lemma 1.2}We let $v\in \mathbf{L}^{2}
\cap \mathbf{H}^{0,1} $ and $f\left( t\right)
=\left| v\right| ^{p-1}v,p\geq 1,$ then 
\begin{equation*}
\left\| f\left( t\right) \right\| _{\mathbf{L}^{2}\left( \mathbf{R}%
^{+}\right) }\leq Ct^{-\frac{p-1}{2}}\left\| Jv\right\| _{\mathbf{L}%
^{2} }^{\frac{p-1}{2}}\left\| v\right\| _{%
\mathbf{L}^{2}}^{\frac{p-1}{2}}\left\|
v\right\| _{\mathbf{L}^{2}},
\end{equation*}
\begin{equation*}
\left\| \partial _{x}f\left( t\right) \right\| _{\mathbf{L}^{2}\left( 
\mathbf{R}^{+}\right) }\leq \left\{ 
\begin{array}{c}
Ct^{-\frac{p-1}{2}}\left\| Jv\right\| _{\mathbf{L}^{2}\left( \mathbf{R}%
^{+}\right) }^{\frac{p-1}{2}}\left\| v\right\| _{\mathbf{L}^{2}\left( 
\mathbf{R}^{+}\right) }^{\frac{p-1}{2}}\left\| \partial _{x}v\right\| _{%
\mathbf{L}^{2}} \\ 
C\left\| \partial _{x}v\right\| _{\mathbf{L}^{2}
}^{\frac{p-1}{2}}\left\| v\right\| _{\mathbf{L}^{2}\left( \mathbf{R}%
^{+}\right) }^{\frac{p-1}{2}}\left\| \partial _{x}v\right\| _{\mathbf{L}%
^{2} }%
\end{array}
\right. ,
\end{equation*}
\begin{equation*}
\left\| \partial _{t}f\left( t\right) \right\| _{\mathbf{L}^{2}\left( 
\mathbf{R}^{+}\right) }\leq \left\{ 
\begin{array}{c}
Ct^{-\frac{p-1}{2}}\left\| Jv\right\| _{\mathbf{L}^{2}\left( \mathbf{R}%
^{+}\right) }^{\frac{p-1}{2}}\left\| v\right\| _{\mathbf{L}^{2}\left( 
\mathbf{R}^{+}\right) }^{\frac{p-1}{2}}\left\| \partial _{t}v\right\| _{%
\mathbf{L}^{2}} \\ 
C\left\| \partial _{x}v\right\| _{\mathbf{L}^{2}
}^{\frac{p-1}{2}}\left\| v\right\| _{\mathbf{L}^{2}\left( \mathbf{R}%
^{+}\right) }^{\frac{p-1}{2}}\left\| \partial _{t}v\right\| _{\mathbf{L}%
^{2} }%
\end{array}
\right.
\end{equation*}
\end{lemma}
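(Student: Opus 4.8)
The plan is to reduce each of the three bounds to a product of a power of $\left\Vert v\right\Vert _{\mathbf{L}^{\infty }}$ and a single $\mathbf{L}^{2}$ norm (of $v$, $\partial _{x}v$, or $\partial _{t}v$ respectively), and then to invoke Lemma \ref{Lemma 1.1} to control $\left\Vert v\right\Vert _{\mathbf{L}^{\infty }}$. The two alternatives appearing in the derivative estimates will correspond exactly to the two alternatives offered by Lemma \ref{Lemma 1.1}.

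First, for $f$ itself I would use the pointwise identity $\left\vert f\right\vert =\left\vert v\right\vert ^{p}$, so that
\begin{equation*}
\left\Vert f\right\Vert _{\mathbf{L}^{2}}^{2}=\int_{0}^{\infty }\left\vert v\right\vert ^{2p}dx\leq \left\Vert v\right\Vert _{\mathbf{L}^{\infty }}^{2\left( p-1\right) }\int_{0}^{\infty }\left\vert v\right\vert ^{2}dx=\left\Vert v\right\Vert _{\mathbf{L}^{\infty }}^{2\left( p-1\right) }\left\Vert v\right\Vert _{\mathbf{L}^{2}}^{2}.
\end{equation*}
Taking square roots and substituting the first bound of Lemma \ref{Lemma 1.1}, namely $\left\Vert v\right\Vert _{\mathbf{L}^{\infty }}\leq Ct^{-\frac{1}{2}}\left\Vert Jv\right\Vert _{\mathbf{L}^{2}}^{\frac{1}{2}}\left\Vert v\right\Vert _{\mathbf{L}^{2}}^{\frac{1}{2}}$, gives precisely the claimed estimate for $\left\Vert f\right\Vert _{\mathbf{L}^{2}}$.

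For the derivatives I would first record the pointwise chain-rule bound. Writing $\left\vert v\right\vert ^{p-1}=\left( v\bar{v}\right) ^{\frac{p-1}{2}}$ and differentiating, one finds $\left\vert \partial _{x}\left( \left\vert v\right\vert ^{p-1}v\right) \right\vert \leq p\left\vert v\right\vert ^{p-1}\left\vert \partial _{x}v\right\vert $ wherever $v\neq 0$ (the zero set is negligible for the $\mathbf{L}^{2}$ integral). Consequently
\begin{equation*}
\left\Vert \partial _{x}f\right\Vert _{\mathbf{L}^{2}}^{2}\leq C\int_{0}^{\infty }\left\vert v\right\vert ^{2\left( p-1\right) }\left\vert \partial _{x}v\right\vert ^{2}dx\leq C\left\Vert v\right\Vert _{\mathbf{L}^{\infty }}^{2\left( p-1\right) }\left\Vert \partial _{x}v\right\Vert _{\mathbf{L}^{2}}^{2},
\end{equation*}
and the identical computation with $\partial _{t}$ replacing $\partial _{x}$ yields $\left\Vert \partial _{t}f\right\Vert _{\mathbf{L}^{2}}\leq C\left\Vert v\right\Vert _{\mathbf{L}^{\infty }}^{p-1}\left\Vert \partial _{t}v\right\Vert _{\mathbf{L}^{2}}$. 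Applying the first bound of Lemma \ref{Lemma 1.1} to $\left\Vert v\right\Vert _{\mathbf{L}^{\infty }}$ then produces the top lines of both derivative estimates (those carrying the $t^{-\frac{p-1}{2}}$ factor together with $\left\Vert Jv\right\Vert _{\mathbf{L}^{2}}^{\frac{p-1}{2}}$), while applying the second bound $\left\Vert v\right\Vert _{\mathbf{L}^{\infty }}\leq C\left\Vert \partial _{x}v\right\Vert _{\mathbf{L}^{2}}^{\frac{1}{2}}\left\Vert v\right\Vert _{\mathbf{L}^{2}}^{\frac{1}{2}}$ produces the bottom lines.

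The argument is essentially routine Gagliardo--Nirenberg interpolation, so I do not anticipate a genuine obstacle; the only point requiring a little care is the differentiation of $\left\vert v\right\vert ^{p-1}v$ for non-integer $p$, where one must check that the singularity of $\left\vert v\right\vert ^{p-1}$ at $v=0$ does no harm. Since $p\geq 1$ gives $p-1\geq 0$, the formal chain rule is valid almost everywhere and the factor $\left\vert v\right\vert ^{p-1}\left\vert \partial _{x}v\right\vert $ remains square integrable, so this step is standard.
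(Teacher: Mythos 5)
Your proposal is correct and follows essentially the same route as the paper: both reduce each quantity to $C\left\Vert v\right\Vert _{\mathbf{L}^{\infty }}^{p-1}$ times the $\mathbf{L}^{2}$ norm of $v$, $\partial _{x}v$, or $\partial _{t}v$ via the chain rule, and then invoke the two alternatives of Lemma \ref{Lemma 1.1} to bound $\left\Vert v\right\Vert _{\mathbf{L}^{\infty }}$, which is exactly where the two lines of each derivative estimate come from. The only cosmetic difference is that the paper writes out the explicit algebraic identities for $\partial _{x}f$ (and $Jf$) before passing to the pointwise bound, whereas you go directly to the inequality $\left\vert \partial _{x}f\right\vert \leq p\left\vert v\right\vert ^{p-1}\left\vert \partial _{x}v\right\vert $.
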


and 
\begin{equation*}
\left\| Jf\left( t\right) \right\| _{\mathbf{L}^{2}\left( \mathbf{R}%
^{+}\right) }\leq \left\{ 
\begin{array}{c}
Ct^{-\frac{p-1}{2}}\left\| v\right\| _{\mathbf{L}^{2}\left( \mathbf{R}%
^{+}\right) }^{\frac{p-1}{2}}\left\| Jv\right\| _{\mathbf{L}^{2}\left( 
\mathbf{R}^{+}\right) }^{\frac{p+1}{2}} \\ 
C\left\| v\right\| _{\mathbf{L}^{2}}^{\frac{p-1%
}{2}}\left\| \partial _{x}v\right\| _{\mathbf{L}^{2}\left( \mathbf{R}%
^{+}\right) }^{\frac{p-1}{2}}\left\| Jv\right\| _{\mathbf{L}^{2}\left( 
\mathbf{R}^{+}\right) }%
\end{array}
\right. .
\end{equation*}

\begin{proof}
By a direct calculation 
\begin{equation*}
Jf=J\left| v\right| ^{p-1}v=\frac{p+1}{2}\left| v\right| ^{p-1}Jv-\frac{p-1}{%
2}\left| v\right| ^{p-3}v^{2}\overline{Jv},
\end{equation*}
\begin{equation*}
\partial _{x}f=\left| v\right| ^{p-1}\partial _{x}v+\frac{p-1}{2}\left|
v\right| ^{p-3}\left( \partial _{x}\left| v\right| ^{2}\right) v
\end{equation*}
Hence 
\begin{eqnarray*}
\left\| Jf\right\| _{\mathbf{L}^{2}} &\leq
&C\left\| v\right\| _{\mathbf{L}^{\infty }
}^{p-1}\left\| Jv\right\| _{\mathbf{L}^{2}}, \\
\left\| \partial _{x}f\right\| _{\mathbf{L}^{2}
} &\leq &C\left\| v\right\| _{\mathbf{L}^{\infty }\left( \mathbf{R}%
^{+}\right) }^{p-1}\left\| \partial _{x}v\right\| _{\mathbf{L}^{2}\left( 
\mathbf{R}^{+}\right) }, \\
\left\| \partial _{t}f\right\| _{\mathbf{L}^{2}
} &\leq &C\left\| v\right\| _{\mathbf{L}^{\infty }\left( \mathbf{R}%
^{+}\right) }^{p-1}\left\| \partial _{t}v\right\| _{\mathbf{L}^{2}\left( 
\mathbf{R}^{+}\right) }.
\end{eqnarray*}
We have by Lemma \ref{Lemma 1.1} 
\begin{equation*}
\left\| Jf\right\| _{\mathbf{L}^{2}}\leq
\left\{ 
\begin{array}{c}
Ct^{-\frac{p-1}{2}}\left\| Jv\right\| _{\mathbf{L}^{2}\left( \mathbf{R}%
^{+}\right) }^{\frac{p+1}{2}}\left\| v\right\| _{\mathbf{L}^{2}\left( 
\mathbf{R}^{+}\right) }^{\frac{p-1}{2}} \\ 
C\left\| v\right\| _{\mathbf{L}^{2}}^{\frac{p-1%
}{2}}\left\| \partial _{x}v\right\| _{\mathbf{L}^{2}\left( \mathbf{R}%
^{+}\right) }^{\frac{p-1}{2}}\left\| Jv\right\| _{\mathbf{L}^{2}\left( 
\mathbf{R}^{+}\right) }%
\end{array}
\right. .
\end{equation*}
In the same way as in the proof of the last estimate, we have other
estimates and so we omit their proofs. This completes the proof of the lemma.
\end{proof}

\begin{lemma}
 \label{Lemma 1.3}
 The operator $\mathcal B^{-1}:=(1+\alpha \partial_x)^{-1}$ satisfies
$$\|\mathcal{B}^{-1}\phi\|_{\esp^ \infty}\leq C \|\phi\|_{\esp^ 2}.$$
$$\mathcal{B}^{-1}-1=-\alpha \mathcal{B}^{-1}\partial_x$$
\end{lemma}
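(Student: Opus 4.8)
The plan is to reduce the statement to an explicit integral kernel for $\mathcal{B}^{-1}=(1+\alpha\partial_x)^{-1}$ and then estimate that kernel. First I would compute $\mathcal{B}^{-1}$ by solving the first–order linear ODE directly: setting $\psi=\mathcal{B}^{-1}\phi$ means $\psi+\alpha\partial_x\psi=\phi$ on $\mathbb{R}^+$. Multiplying by the integrating factor $e^{x/\alpha}$ and choosing the unique solution that remains bounded as $x\to\infty$ together with $\psi(0)=0$ (the relevant normalization here, since $\mathcal{B}^{-1}$ is applied to Fourier sine transforms, which vanish at the origin) yields
\begin{equation*}
\mathcal{B}^{-1}\phi(x)=\frac{1}{\alpha}\int_{0}^{x}e^{-(x-y)/\alpha}\phi(y)\,dy .
\end{equation*}
A direct differentiation gives $(1+\alpha\partial_x)\mathcal{B}^{-1}\phi=\phi$, confirming this is genuinely the inverse.

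For the first estimate I would bound this representation pointwise via Cauchy--Schwarz. The kernel $k(x,y)=\alpha^{-1}e^{-(x-y)/\alpha}\mathbf{1}_{\{0<y<x\}}$ satisfies $\int_{0}^{x}|k(x,y)|^{2}\,dy\le \tfrac{1}{2\alpha}$ uniformly in $x$, so that $|\mathcal{B}^{-1}\phi(x)|\le (2\alpha)^{-1/2}\|\phi\|_{\esp^{2}}$ for every $x$, and hence $\|\mathcal{B}^{-1}\phi\|_{\esp^{\infty}}\le C\|\phi\|_{\esp^{2}}$. Equivalently one may invoke Young's inequality for convolution against the $\esp^{2}$ kernel $\alpha^{-1}e^{-x/\alpha}\mathbf{1}_{\{x>0\}}$.

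The operator identity is purely algebraic and requires no kernel: composing on the left with $\mathcal{B}^{-1}$ and using $\mathcal{B}=1+\alpha\partial_x$,
\begin{equation*}
\mathcal{B}^{-1}-1=\mathcal{B}^{-1}-\mathcal{B}^{-1}\mathcal{B}=\mathcal{B}^{-1}(1-\mathcal{B})=-\alpha\,\mathcal{B}^{-1}\partial_x .
\end{equation*}

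I expect the only genuinely delicate step to be the derivation of the kernel, specifically the accompanying choice of boundary condition and the sign of $\alpha$, which together determine that the bounded solution is the one selected (a different choice would add a term $Ce^{-x/\alpha}$ and spoil the clean $\esp^2\to\esp^\infty$ bound unless $C=0$). Once the representation is fixed, everything reduces to Cauchy--Schwarz and the elementary algebraic manipulation above.
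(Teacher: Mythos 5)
Your route is genuinely different from the paper's: you invert $\mathcal{B}$ in physical space by solving the ODE and bounding the resulting kernel by Cauchy--Schwarz, whereas the paper never writes a kernel at all; it takes $\mathcal{B}^{-1}$ to be the operator defined through the sine transform by $\mathcal{B}^{-1}\mathcal{F}_s=\mathcal{F}^{-1}\frac{1}{1+i\alpha p}$ and gets the $\mathbf{L}^{2}\to\mathbf{L}^{\infty}$ bound from the H\"older inequality and Plancherel, using only that $\left\|\frac{1}{1+i\alpha p}\right\|_{\mathbf{L}^{2}}$ is finite. Your derivation of $\mathcal{B}^{-1}-1=-\alpha\mathcal{B}^{-1}\partial_{x}$ coincides with the paper's (both invoke $\mathcal{B}^{-1}\mathcal{B}=1$, which strictly speaking holds only on functions vanishing at $x=0$ — a caveat the paper shares, and consistent with how the identity is later applied).

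The genuine gap is the sign of $\alpha$: everything you wrote presupposes $\alpha>0$, while the paper assumes only $\alpha\in\mathbf{R}$, $\alpha\neq 0$. If $\alpha<0$, the exponent $-(x-y)/\alpha$ is positive on $\{0<y<x\}$, your kernel grows exponentially, the claimed bound $\int_{0}^{x}|k(x,y)|^{2}\,dy\le\frac{1}{2\alpha}$ is false (its right-hand side is negative), and the solution normalized by $\psi(0)=0$ is generically unbounded; the bounded inverse for $\alpha<0$ is instead $\psi(x)=-\frac{1}{\alpha}\int_{x}^{\infty}e^{-(x-y)/\alpha}\phi(y)\,dy$, which cannot also satisfy $\psi(0)=0$. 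You flag the sign as delicate but never resolve it, and your selection principle is in fact backwards for $\alpha>0$: there the homogeneous solution $e^{-x/\alpha}$ decays, so \emph{every} solution is bounded and $\psi(0)=0$ merely picks one right inverse among many — and not the one the paper uses. The paper's $\mathcal{B}^{-1}$ is the symbol-defined operator, which for $\alpha>0$ differs from yours by a rank-one correction proportional to $e^{-x/\alpha}$; that specific realization is what is used downstream, e.g. to compute $z(t,0)$ and $\partial_{x}z(t,0)$ from the symbol $\frac{p}{1+i\alpha p}$ and to factorize $U(t)$. So as written your argument establishes the two displayed properties only for $\alpha>0$, and for a different operator than the one the lemma refers to in context. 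Both defects disappear if the estimate is run on the Fourier side, as the paper does, since $\int_{-\infty}^{\infty}\frac{dp}{1+\alpha^{2}p^{2}}=\pi/|\alpha|$ is uniform in the sign of $\alpha$.
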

\begin{proof}
By virtue of
$$\mathcal{B}^{-1} \mathcal{F}_s =\mathcal{F}^{-1}\frac{1}{1+i\alpha p},$$
we have
$$\mathcal{B}^{-1}\phi=\mathcal{B}^{-1}\mathcal{F}_s\mathcal{F}_s \phi=\mathcal{F}^{-1}\frac{1}{1+i\alpha p}\mathcal{F}_s\phi,$$
Therefore, from the H\"older inequality and the Plancherel theorem, we obtain
\begin{equation}
\|\mathcal{B}^{-1}\phi\|_{\esp^\infty}\leq C \left\|\frac{1}{1+i\alpha p}\right\|_{\esp^2}\|\mathcal{F}_s\phi\|_{\esp^ 2}\leq C\|\phi\|_{\esp ^2}.
\end{equation}
On the other hand, from $\mathcal{B}^{-1}\mathcal{B}=1$ we conclude $\mathcal{B}^{-1}-1=-\alpha \mathcal{B}^{-1}\partial_x$.
\end{proof}
\section{Local existence in time of solutions\label{Section 2}}

We consider the linearized equation of (\ref{1.1-1}) 
\begin{equation}
\left\{ 
\begin{array}{c}
Lu^{\left( n+1\right) }=\lambda \left| u^{\left( n\right) }\right|
^{p-1}u^{\left( n\right) },x\text{ }\mathbf{\in R}^{+},t>0, \\ 
u^{\left( n+1\right) }(0,x)=u_{0}(x),\text{ }x\text{ }\mathbf{\in R}^{+}, \\ 
u^{\left( n+1\right) }\left( t,0\right) +\alpha u^{\left( n+1\right) }_x\left( t,0\right) =h\left( t\right) ,\text{ }%
\end{array}
\right.  \label{2.2}
\end{equation}
for $n\geq 2$ and 
\begin{equation*}
u^{\left( 1\right) }\in \mathbf{C}\left( \left[ 0,\infty \right) ;\mathbf{H}%
^{1,0} \cap \mathbf{H}^{0,1} \right)
\end{equation*}
is the solution of 
\begin{equation*}
\left\{ 
\begin{array}{c}
Lu^{\left( 1\right) }=0,x\mathbf{\in R}^{+},t>0, \\ 
u^{\left( 1\right) }(0,x)=u_{0}(x),\text{ }x\text{ }\mathbf{\in R}^{+}, \\ 
u^{\left( 1\right) }\left( t,0\right) +\alpha u^{\left( 1\right) }_x\left( t,0\right)=h\left( t\right) .%
\end{array}
\right.
\end{equation*}
We assume that the compatibility condition $u^{\left( n+1\right) }\left(
0,0\right) =u_{0}^{\left( n+1\right) }\left( 0\right) =h\left( 0\right) =0$
holds.

For simplicity, we let $u^{\left( n+1\right) }=v,$ $f=\lambda \left|
u^{\left( n\right) }\right| ^{p-1}u^{\left( n\right) }$. We divide $%
u^{\left( n+1\right) }$ into two parts 
\begin{equation*}
u^{\left( n+1\right) }=v=w+z,
\end{equation*}
where $w$ is the solution of homogeneous boundary condition such that 
\begin{equation}
\left\{ 
\begin{array}{c}
Lw=f,\text{ }x\text{\ }\mathbf{\in R}^{+},\text{ }t>0, \\ 
w(0,x)=u_{0}(x),x\mathbf{\in R}^{+}, \\ 
w\left( t,0\right)+\alpha w_x(t,0) =0%
\end{array}
\right.  \label{2.3}
\end{equation}
and $z$ is the solution with zero initial data such that 
\begin{equation}
\left\{ 
\begin{array}{c}
Lz=0,x\mathbf{\in R}^{+},\text{ }t>0, \\ 
z(0,x)=0,\text{ \ }x\text{ \ }\mathbf{\in R}^{+}, \\ 
z\left( t,0\right) +\alpha z_x(t,0)=h\left( t\right) .%
\end{array}
\right.  \label{2.4}
\end{equation}

Multiplying both sides of (\ref{2.3}) and (\ref{2.4}) by $\overline{w}$ and $%
\overline{z},$ respectively, integrating in space and taking the imaginary
part, we obtain 
\begin{eqnarray*}
&&\frac{1}{2}\frac{d}{dt}\left\| w\left( t\right) \right\| _{\mathbf{L}%
^{2} }^{2} \\
&=&-\frac{1}{2}\text{Im}\int_{0}^{\infty }\partial _{x}\left( \partial
_{x}w(t,x)\cdot \overline{w}(t\mathbf{,}x)\right) dx+\text{Im}%
\int_{0}^{\infty }f(t\mathbf{,}x)\overline{w}(t\mathbf{,}x)dx
\end{eqnarray*}
and 
\begin{equation*}
\frac{1}{2}\frac{d}{dt}\left\| z\left( t\right) \right\| _{\mathbf{L}%
^{2} }^{2}=-\frac{1}{2}\text{Im}\int_{0}^{\infty
}\partial _{x}\left( \partial _{x}z(t,x)\cdot \overline{z}(t\mathbf{,}%
x)\right) dx.
\end{equation*}
By the boundary conditions, since $\alpha \in \mathbb R$, we find that 
\begin{equation*}
\text{Im } \int_{0}^{\infty }\partial _{x}\left( \partial _{x}w(t,x)\cdot \overline{w}(t%
\mathbf{,}x)\right) dx=-\alpha \text{Im } \left( \partial_x w(t,0)\overline{\partial_x w(t,0)}\right)=0,
\end{equation*}
\begin{equation*}
\text{Im } \int_{0}^{\infty }\partial _{x}\left( \partial _{x}z(t,x)\cdot \overline{z}(t%
\mathbf{,}x)\right) dx=-\text{Im }\left( {\partial_x z}(t,0)\cdot\overline{h(t)}\right)
\end{equation*}
Hence we obtain 
\begin{equation}
\left\| w\left( t\right) \right\| _{\mathbf{L}^{2} }^2\leq \left\| u_{0}\right\| _{\mathbf{L}^{2} }^2+\int_{0}^{t}\left\| f\left( \tau \right) \right\| _{\mathbf{L}^{2}}\left\| w\left( \tau \right) \right\| _{\mathbf{L}^{2}}d\tau  \label{2.5}
\end{equation}
and 
\begin{equation}
\left\| z\left( t\right) \right\| _{\mathbf{L}^{2}\left( \mathbf{R}%
^{+}\right) }^{2}\leq \int_{0}^{t}\left|\partial_x z\left( \tau ,0\right) \right|
\left| h\left( \tau \right) \right| d\tau .  \label{2.6}
\end{equation}

We differentiate (\ref{2.3}) and (\ref{2.4}) with respect to time $t$ and in
the same way as in the proofs of (\ref{2.5}) and (\ref{2.6}), we get with
the identity $i\partial _{t}w\left( 0\right) =-\frac{1}{2}\partial
_{x}^{2}u_{0}+\lambda f\left( 0,x\right) $
\begin{eqnarray}
&&\left\| \partial _{t}w\left( t\right) \right\| _{\mathbf{L}^{2}\left( 
\mathbf{R}^{+}\right) }  \notag \\
&\leq &\left\| \partial _{t}w\left( 0\right) \right\| _{\mathbf{L}^{2}\left( 
\mathbf{R}^{+}\right) }+\int_{0}^{t}\left\| \partial _{t}f\left( \tau
\right) \right\| _{\mathbf{L}^{2} }\|\partial_\tau w\|_{\esp^2}d\tau  \notag
\\
&\leq &C\left( \left\| \partial _{x}^{2}u_{0}\right\| _{\mathbf{L}^{2}\left( 
\mathbf{R}^{+}\right) }+\left\| f\left( 0\right) \right\| _{\mathbf{L}%
^{2} }+\int_{0}^{t}\left\| \partial _{\tau
}f\left( \tau \right) \right\| _{\mathbf{L}^{2}
}\|\partial_\tau w\|_{\esp^2}d\tau \right)  \label{2.91}
\end{eqnarray}
and 
\begin{equation}
\left\| \partial _{t}z\left( t\right) \right\| _{\mathbf{L}^{2}\left( 
\mathbf{R}^{+}\right) }^{2}\leq \frac{1}{|\alpha| }\int_{0}^{t}\left| \partial _{\tau }z\left(
\tau ,0\right) \right| \left| \partial _{\tau }h\left( \tau \right) \right|
d\tau .  \label{2.10bb}
\end{equation}

In the same way, we have 
\begin{eqnarray*}
&&\frac{1}{2}\frac{d}{dt}\left\| \partial _{x}w\left( t\right) \right\| _{%
\mathbf{L}^{2}\left( \mathbf{R}^{+}\right) }^{2} \\
&=&-\frac{1}{2}\text{Im}\int_{0}^{\infty }\partial _{x}\left( \partial
_{x}^{2}w(t\mathbf{,}x)\cdot \partial _{x}\overline{w}(t,x)\right) dx+\text{%
Im}\int_{0}^{\infty }\partial _{x}f(t,x)\cdot \partial _{x}\overline{w}%
(t,x)dx
\end{eqnarray*}

\begin{eqnarray*}
\frac{1}{2}\frac{d}{dt}\left\| \partial _{x}z\left( t\right) \right\| _{%
\mathbf{L}^{2} }^{2} 
&=&-\frac{1}{2}\text{Im}\int_{0}^{\infty }\partial _{x}\left( \partial
_{x}^{2}z(t\mathbf{,}x)\cdot \partial _{x}\overline{z}(t,x)\right) dx \\
&=&-\frac{1}{2}\text{Im}\int_{0}^{\infty }\partial _{x}\left( -2i\partial
_{t}z(t\mathbf{,}x)\cdot \partial _{x}\overline{z}(t,x)\right) dx \\
&=&-\text{Re }\left( \partial _{t}z(t\mathbf{,}0)\cdot \overline{\partial_x z(t,0)}\right) ,
\end{eqnarray*}
therefore
\begin{equation}\label{2.7}
\left\| \partial _{x}w\left( t\right) \right\| _{%
\mathbf{L}^{2}\left( \mathbf{R}^{+}\right) }^{2}\leq C\left|  \int \limits_0^t \text{Im }(\partial
_{x}^{2}w(\tau\mathbf{,}0)\cdot \partial _{x}\overline{w}(\tau,0))d \tau\right|  +C \int_{0}^{t}\|\partial_x f(\tau)\|_{\esp^ 2}\|\partial _x w\|_{\esp^ 2}d\tau
\end{equation}
\begin{equation}
\left\| \partial _{x}z\left( t\right) \right\| _{\mathbf{L}^{2}\left( 
\mathbf{R}^{+}\right) }^{2}\leq 2\int_{0}^{t}\left| \partial _{\tau }z\left(
\tau ,0\right) \right| \left| \partial_x z(\tau,0)\right| d\tau .
\label{2.8}
\end{equation}

We multiply both sides of (\ref{2.4}) by $J$ and use the
energy method to obtain 
\begin{eqnarray}
&&\frac{1}{2}\frac{d}{dt}\left\| Jw\left( t\right) \right\| _{\mathbf{L}%
^{2}\left( \mathbf{R}^{+}\right) }^{2}  \notag \\
&=&-\frac{1}{2}\text{Im}\int_{0}^{\infty }\partial _{x}\left( \partial
_{x}Jw(t\mathbf{,}x)\cdot \overline{Jw}(t,x)\right) dx+\text{Im}%
\int_{0}^{\infty }Jf(t,x)\cdot \overline{Jw}(t,x)dx  \label{2.11}
\end{eqnarray}
and
\begin{equation}
\frac{1}{2}\frac{d}{dt}\left\| Jz\left( t\right) \right\| _{\mathbf{L}%
^{2} }^{2}=-\frac{1}{2}\text{Im}\int_{0}^{\infty
}\partial _{x}\left( \partial _{x}Jz(t\mathbf{,}x)\cdot \overline{Jz}%
(t,x)\right) dx.  \label{2.12}
\end{equation}
We observe
We consider the first terms of the right hand side of (\ref{2.11}) and (\ref%
{2.12}) to have 
\begin{eqnarray}
&&\int_{0}^{\infty }\partial _{x}\left( \partial _{x}Jw\cdot \overline{Jw}%
\right) dx  \notag \\
&=&\int_{0}^{\infty }\partial _{x}\left( \left( 1+x\partial _{x}\right)
w\cdot \overline{\left( x+it\partial _{x}\right) w}\right) dx  \notag \\
&&+it\int_{0}^{\infty }\partial _{x}\left( \partial _{x}^{2}w\cdot \overline{%
\left( x+it\partial _{x}\right) w}\right) dx\\
&=&-it w(t,0)\overline{\partial_x w(t,0)}+t^2\partial_x^2 w(t,0)\overline{\partial_x w(t,0)} \label{2.13}
\end{eqnarray}
 and similarly 
\begin{eqnarray*}
&&\int_{0}^{\infty }\partial _{x}\left( \partial _{x}Jz\cdot \overline{Jz}%
\right) dx \\
&=&-it\int_{0}^{\infty }\partial _{x}\left( z\cdot \overline{\partial _{x}z}%
\right) dx+t^{2}\int_{0}^{\infty }\partial _{x}\left( \partial
_{x}^{2}z\cdot \overline{\partial _{x}z}\right) dx.
\end{eqnarray*}
We use the identity $\partial _{x}^{2}z=-2i\partial _{t}z$ to find and 
\begin{equation}
\int_{0}^{\infty }\partial _{x}\left( \partial _{x}Jz\cdot \overline{Jz}%
\right) dx=\left( -itz(t,0)-2it^{2}\partial _{t}z(t,0)
\right) \cdot \overline{\partial _{x}z\left( t,0\right) }.  \label{2.14}
\end{equation}
We apply (\ref{2.13}) and (\ref{2.14}) to (\ref{2.11}) and (\ref{2.12}),
respectively to get 
\begin{equation}
\begin{array}{l}
\left\| Jw\left( t\right) \right\| _{\mathbf{L}%
^{2}\left( \mathbf{R}^{+}\right) }^{2}\leq\displaystyle  C\int \limits_0^t |i\tau w(\tau,0)+\tau^2\partial_x^2 w(\tau,0)||\overline{\partial_x w(t,0)} |d\tau\\
\hspace{3cm}\displaystyle +C \int_{0}^{t
}\|Jf(\tau)\|_{\esp^2} \|Jw(\tau)\|_{\esp^ 2}d\tau  \label{2.15}
\end{array}
\end{equation}
and 
\begin{equation}\label{2.140}
\left\Vert Jz\left( t\right) \right\Vert _{\mathbf{L}^{2}\left( \mathbf{R}%
^{+}\right) }^{2}\leq \int_{0}^{t}|\tau z(\tau,0)+2i\tau^{2}\partial _{\tau}z(\tau,0)||\partial _{x}z\left( t,0\right)|
\end{equation}

We divide the proof of Theorem \ref{Theorem 2} into several parts. We
introduce the function space 
\begin{equation*}
\mathbf{X}_{T}=\left\{ \phi \left( t\right) \in \mathbf{C}\left( \left[ 0,T%
\right] ;\mathbf{X}\right) ;\left\| \phi \right\| _{\mathbf{X}%
_{T}}=\sup_{t\in \left[ 0,T\right] }\left\| \phi \left( t\right) \right\| _{%
\mathbf{X}}<\infty \right\} ,
\end{equation*}
where 
\begin{eqnarray*}\left\| \phi \left( t\right) \right\| _{\mathbf{X}}^{2} &=&\left\| \phi
\left( t\right) \right\| _{\mathbf{L}^{2}
}^{2}+\left\| \partial _{x}\phi \left( t\right) \right\| _{\mathbf{L}%
^{2} }^{2}+\left\| J\phi \left( t\right)
\right\| _{\mathbf{L}^{2} }^{2} \\
&&+\left\| \partial _{x}^{2}\phi \left( t\right) \right\| _{\mathbf{L}%
^{2} }^{2}+\left\| \partial _{t}\phi \left(
t\right) \right\| _{\mathbf{L}^{2} }^{2}.
\end{eqnarray*}
We first prove
\begin{lemma}\label{Lemma nonhom}
 We assume $h\in \mathbf C([0,T])$, with $h(0)=0$  and 
\begin{equation*}
\sup_{t\in \left[ 0,T\right] }\left| h\left( t\right) \right| +\sup_{t\in \left[ 0,T\right] }\left| \partial _{t}h\left( t\right) \right| \leq \rho^\rho .
\end{equation*}
 Then the solution of \eqref{2.4} is given by
 \begin{equation}
  z(t,x)=\mathcal{B}^{-1}\left\{\frac{1}{\sqrt{2i\pi }}\int_{0}^{t}e^{\frac{ix^{2}}{2\tau }}\frac{x}{\tau\sqrt{\tau }}h\left( t-\tau \right) d\tau .\right\} 
\end{equation}  
 Furthermore we have the estimate such that 
\begin{equation*}
\left\| z\right\| _{\mathbf{X}_{T}}\leq C\rho ^{p}T^{\frac{3}{2}}\left( 1+T^{\frac{3}{2}}+T^{\frac{5}{2}}\right) .
\end{equation*}
\end{lemma}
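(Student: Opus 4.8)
The plan is to produce the closed form of $z$ first and then estimate it in $\mathbf{X}_T$, in both steps reducing the mixed condition $\mathcal{B}z(t,0)=h(t)$ to a pure Dirichlet condition through the operator $\mathcal{B}=1+\alpha\partial_x$. First I would introduce $g:=\mathcal{B}z=z+\alpha\partial_x z$. Since $\mathcal{B}$ has constant coefficients it commutes with $L=i\partial_t+\tfrac12\partial_x^2$, so by \eqref{2.4} the new unknown solves the pure Dirichlet problem $Lg=0$, $g(0,x)=0$, $g(t,0)=\mathcal{B}z(t,0)=h(t)$. This half-line problem I would solve by the Fourier sine transform in $x$: integrating $\partial_x^2$ by parts produces a boundary term carrying $g(t,0)=h(t)$, so $\widehat{g}_s(p,t)$ satisfies a linear first order ODE in $t$ with source proportional to $p\,h(t)$; solving with $\widehat{g}_s(p,0)=0$ and inverting (the remaining $p$-integral is Gaussian) gives the Poisson kernel $\tfrac{x}{\sqrt{2i\pi}\,\tau^{3/2}}e^{ix^2/(2\tau)}$, hence $g(t,x)=\tfrac{1}{\sqrt{2i\pi}}\int_0^t e^{ix^2/(2\tau)}\tfrac{x}{\tau\sqrt{\tau}}\,h(t-\tau)\,d\tau$. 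Applying $\mathcal{B}^{-1}$ yields the stated formula $z=\mathcal{B}^{-1}g$. (Equivalently, the Laplace transform in $t$ gives $\widehat{g}(x,s)=\widehat{h}(s)\,e^{-\sqrt{-2is}\,x}$ and the same kernel after inversion.)

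Next I would transfer the $\mathbf{X}_T$ bound from $z$ to $g$ using Lemma~\ref{Lemma 1.3}. The multiplier $1/(1+i\alpha p)$ has modulus at most $1$, so $\mathcal{B}^{-1}$ is a contraction on $\mathbf{L}^2$ and commutes with $\partial_t$ and $\partial_x$; this bounds $\|z\|_{\mathbf{L}^2}$, $\|\partial_x z\|_{\mathbf{L}^2}$, $\|\partial_t z\|_{\mathbf{L}^2}$ by the corresponding norms of $g$, and the equation $\partial_x^2 z=-2i\partial_t z$ bounds the second derivative by $\|\partial_t g\|_{\mathbf{L}^2}$. The only piece not commuting with $\mathcal{B}^{-1}$ is the weight $x$ in $J$; from $[x,\mathcal{B}]=-\alpha$ one gets $[x,\mathcal{B}^{-1}]=\alpha\mathcal{B}^{-2}$, so $Jz=\mathcal{B}^{-1}Jg+\alpha\mathcal{B}^{-2}g$ and $\|Jz\|_{\mathbf{L}^2}\le\|Jg\|_{\mathbf{L}^2}+|\alpha|\,\|g\|_{\mathbf{L}^2}$. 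Thus $\|z\|_{\mathbf{X}_T}\lesssim\|g\|_{\mathbf{X}_T}+\sup_{[0,T]}\|g\|_{\mathbf{L}^2}$, and it remains to estimate the Dirichlet solution $g$.

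This last step is where the work lies, and is the main obstacle. The kernel $\tau^{-3/2}e^{ix^2/(2\tau)}$ is not in $\mathbf{L}^2_x$, so absolute values cannot be taken and the oscillation must be exploited. I would first integrate by parts in $\tau$, using the identity $e^{ix^2/(2\tau)}\tfrac{x}{\tau^{3/2}}=\tfrac{2i\sqrt{\tau}}{x}\,\partial_\tau e^{ix^2/(2\tau)}$: the boundary contributions vanish because $h(0)=0$ kills the endpoint $\tau=t$ and the $\sqrt{\tau}$ weight kills $\tau=0$, and the surviving integrand carries only a $\tau^{-1/2}$ singularity together with $h$ and $\partial_t h$. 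Combining the sharp data bound $|h(s)|\le C\rho^p\min(1,s)$ (a consequence of $h(0)=0$) with the scaling $\tau=t\sigma$, $x=\xi\sqrt{t}$ and oscillatory-integral estimates in $x$ yields the $\mathbf{L}^2_x$ bound for $g$. The quantities $\partial_x g$, $\partial_t g$ (hence $\partial_x^2 g$) and above all $Jg$ are handled the same way: applying $J$ to the kernel gives $JK=e^{ix^2/(2\tau)}\tau^{-3/2}\big(x^2(\tau-t)/\tau+it\big)$, which carries extra factors $t$, $\tau$ and $x^2$, and a second integration by parts in $\tau$—again legitimate because $h(0)=0$—turns the factor $(t-\tau)$ into $\partial_t h$ while keeping all integrals convergent.

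Collecting the five components and tracking the powers of $t\le T$ then produces the claimed bound $\|z\|_{\mathbf{X}_T}\le C\rho^p T^{3/2}\big(1+T^{3/2}+T^{5/2}\big)$, the data entering linearly. The explicit factor $t$ inside $J$ and the term $x^2(\tau-t)/\tau$ are exactly what force the highest powers $T^{5/2}$, so the $J$-weighted oscillatory integral is the most delicate estimate of the proof; the two ingredients that make it work are the repeated integration by parts in $\tau$, legitimate precisely because $h(0)=0$ removes every boundary term, and the use of the oscillation in $x$ to overcome the non-integrability of the kernel.
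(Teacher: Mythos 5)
Your reduction $g:=\mathcal{B}z$, the derivation of the Dirichlet kernel formula for $g$, and the transfer of estimates through $\mathcal{B}^{-1}$ (the $\mathbf{L}^{2}$-boundedness of the multiplier $1/(1+i\alpha p)$ together with the commutator $[x,\mathcal{B}^{-1}]=\alpha\mathcal{B}^{-2}$, so that $Jz=\mathcal{B}^{-1}Jg+\alpha\mathcal{B}^{-2}g$) are all correct. But for the estimates you then take a genuinely different route from the paper, and it is exactly there that your argument breaks. The paper never estimates the oscillatory kernel in $\mathbf{L}^{2}_{x}$ at all: since $Lz=0$, the energy identities \eqref{2.6}, \eqref{2.8}, \eqref{2.10bb}, \eqref{2.140} express the growth of $\left\Vert z\right\Vert_{\mathbf{L}^{2}}^{2}$, $\left\Vert \partial_{x}z\right\Vert_{\mathbf{L}^{2}}^{2}$, $\left\Vert \partial_{t}z\right\Vert_{\mathbf{L}^{2}}^{2}$, $\left\Vert Jz\right\Vert_{\mathbf{L}^{2}}^{2}$ as pure boundary fluxes at $x=0$, and the only analytic input needed is the pointwise bound $|z(t,0)|+|\partial_{x}z(t,0)|+|\partial_{t}z(t,0)|\leq C\rho^{p}\sqrt{t}$, which follows from the absolutely convergent one-dimensional representations \eqref{3.13}--\eqref{3.15}. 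This sidesteps every difficulty your plan must confront in $\mathbf{L}^{2}_{x}$.

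The concrete gap is in your treatment of $Jz$, which you yourself single out as the delicate step. Your identity $JK=e^{ix^{2}/(2\tau)}\tau^{-3/2}\bigl(x^{2}(\tau-t)/\tau+it\bigr)$ is correct, but the representation $Jg=\int_{0}^{t}(JK)(\tau,x)\,h(t-\tau)\,d\tau$ on which your second integration by parts operates does not exist, even as an improper integral: near $\tau=0$ the worst piece is $x^{2}(\tau-t)\tau^{-5/2}e^{ix^{2}/(2\tau)}h(t-\tau)\approx -t\,x^{2}h(t)\,\tau^{-5/2}e^{ix^{2}/(2\tau)}$, and with $\sigma=1/\tau$,
\begin{equation*}
\int_{\epsilon}^{c}\tau^{-5/2}e^{\frac{ix^{2}}{2\tau}}\,d\tau
=\int_{1/c}^{1/\epsilon}\sigma^{1/2}e^{\frac{ix^{2}\sigma}{2}}\,d\sigma ,
\end{equation*}
whose modulus grows like $\epsilon^{-1/2}x^{-2}$. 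So differentiating under the (only conditionally convergent) integral defining $g$ is not permitted, and the failure cannot be cured "because $h(0)=0$": the troublesome endpoint is $\tau=0$, where $h(t-\tau)=h(t)\neq 0$; the hypothesis $h(0)=0$ only kills the endpoint $\tau=t$. The repair, which is what the paper effectively does, is to differentiate the substituted form $g=\frac{2}{\sqrt{2i\pi}}\int_{x/\sqrt{t}}^{\infty}e^{iy^{2}/2}h\bigl(t-\tfrac{x^{2}}{y^{2}}\bigr)\,dy$, where the boundary term produced by $\partial_{x}$ sits at $y=x/\sqrt{t}$, i.e.\ at $\tau=t$, and is killed by $h(0)=0$; this yields the absolutely convergent formula $\partial_{x}g=C\int_{0}^{t}e^{ix^{2}/(2\tau)}\tau^{-1/2}\partial_{t}h(t-\tau)\,d\tau$ (the paper's formula for $\partial_{x}z$ before applying $\mathcal{B}^{-1}$), after which $Jg=xg+it\partial_{x}g$ can be estimated term by term. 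A secondary weakness: your $\mathbf{L}^{2}_{x}$ bound for $g$ itself is only sketched, and the pointwise bounds of the form $|g|\lesssim \rho^{p}\min\bigl(1,\sqrt{t}(1+t)/x\bigr)$ that your splitting produces lose powers of $T$; if you insist on the direct route, Plancherel in $x$, i.e.\ $\left\Vert g\right\Vert_{\mathbf{L}^{2}_{x}}=C\bigl\Vert p\int_{0}^{t}e^{ip^{2}\tau/2}h(t-\tau)\,d\tau\bigr\Vert_{\mathbf{L}^{2}_{p}}$ combined with one integration by parts in $\tau$, is the cleaner way to execute it.
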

\begin{proof}
By changing of variable$\frac{x}{\sqrt{\tau }}=y$%
\begin{equation*}
z\left( t,x\right) =\mathcal{B}^{-1}\left\{\frac{2}{\sqrt{2i\pi }}\int_{\frac{x}{\sqrt{t}}}^{\infty
}e^{\frac{iy^{2}}{2}}h\left( t-\frac{x^{2}}{y^{2}}\right) dy\right\}
\end{equation*}
which implies 
\begin{equation*}
\mathcal B z\left( t,0\right) =\frac{2}{\sqrt{2i\pi }}h\left( t\right) \int_{0}^{\infty
}e^{\frac{iy^{2}}{2}}dy=h\left( t\right) .
\end{equation*}
Using the commutator relations $[\partial_x, \mathcal{B}^{-1}]=0$ and by a direct calculation with $h\left( 0\right) =0$ we have
\begin{equation*}
 \begin{array}{rcl}
   \partial_x z(t,x)\hspace{-0.3cm}&=&\displaystyle\mathcal{B}^{-1}\partial_x \left\{\frac{2}{\sqrt{2i\pi }}\int_{\frac{x}{\sqrt{t}}}^{\infty
}e^{\frac{iy^{2}}{2}}h\left( t-\frac{x^{2}}{y^{2}}\right) dy\right\}\\
\hspace{-0.3cm}&=&\mathcal{B}^{-1}\left\{\displaystyle\frac{2}{\sqrt{2i\pi }}\int_{0}^{t}e^{\frac{ix^{2}}{2\tau }}\frac{1}{\sqrt{\tau }}\partial _{t}h\left( t-\tau \right) d\tau \right\},  
 \end{array}
\end{equation*}
\begin{equation*}
\partial _{x}^{2}z\left( t,x\right) =\mathcal{B}^{-1}\left\{\frac{2i}{\sqrt{2i\pi }}\int_{0}^{t}e^{%
\frac{ix^{2}}{2\tau }}\frac{x}{\tau \sqrt{\tau }}\partial _{t}h\left( t-\tau
\right) d\tau\right\}
\end{equation*}
and 
\begin{equation*}
\partial _{t}z\left( t,x\right) =-\mathcal{B}^{-1}\left\{\frac{1}{\sqrt{2i\pi }}\int_{0}^{t}e^{\frac{%
ix^{2}}{2\tau }}\frac{x}{\tau \sqrt{\tau }}\partial _{t}h\left( t-\tau
\right) d\tau \right\}.
\end{equation*}
Therefore we find that $z$ is the solution of 
\begin{equation*}
i\partial _{t}z+\frac{1}{2}\partial _{x}^{2}z=0
\end{equation*}
with $\mathcal B z\left( t,0\right) =h\left( t\right) .$  

To estimate $z(t,0), \ \partial_x(t,0)$ and $\partial_tz(t,0)$ we use
$$e^{\frac{ix^{2}}{2\tau }}\frac{x}{\tau\sqrt{\tau }}=\mathcal{F}_s\{e^{i\frac{p^2}{2}\tau}p\},$$
to rewrite $z$ as
\begin{equation}\label{bfz}
z(t,x)=\mathcal B^{-1}\mathcal F_s \left\{p\int\limits_0^te^{i\frac{p^2}{2}\tau} h(t-\tau)d\tau\right\}.
\end{equation}
By virtue of 
$$\mathcal B^{-1}\mathcal{F}_s=\mathcal{F}^{-1}\frac{1}{1+i\alpha p}$$
we have
\begin{equation}
 \begin{array}{rcl}
   z(t,0)\hspace{-0.3cm}&=&\hspace{-0.3cm}\displaystyle \int \limits_0^t h(t-\tau) \int \limits_{-\infty}^\infty  e^{i\frac{p^2}{2}\tau}\frac{p}{1+i\alpha p }dp\ d\tau
 \end{array}
\end{equation}
Using $\mathcal{F}^{-1}\{e^{i\frac{p^2}{2}\tau}\}=\frac{\sqrt{\pi}}{\sqrt{i\tau}}, \ \ \ \mathcal F_c\{e^{-y}\} =\frac{1}{1+\alpha^2 p^2},$
we get
$$\int \limits_{-\infty}^\infty  e^{i\frac{p^2}{2}\tau}\frac{1}{1+i\alpha p }dp=\frac{C}{\sqrt{\tau}}\int\limits_{0}^{\infty}e^{-y}e^{\alpha^2 \frac{iy^2}{2\tau}}dy$$
as consequence
$$\int \limits_{-\infty}^\infty  e^{i\frac{p^2}{2}\tau}\frac{p}{1+i\alpha p }dp=\frac{\sqrt{\pi}}{\alpha\sqrt{i\tau}}-\frac{\sqrt{2}}{\sqrt{\tau}}\int\limits_{0}^{\infty}e^{-y}e^{\alpha^2 \frac{iy^2}{2\tau}}dy=\frac{C}{\sqrt{\tau}}$$ thus
\begin{equation}\label{3.13}
 \begin{array}{rcl}
   z(t,0)\hspace{-0.3cm}&=&\hspace{-0.3cm}\displaystyle  C \int \limits_0^t \frac{h(t-\tau)}{\sqrt{\tau}}\ d\tau
 \end{array}
 \end{equation}
 In the same way we have
\begin{equation}\label{3.14}
 \begin{array}{rcl}
   \partial_x z(t,0)\hspace{-0.3cm}&=&\hspace{-0.3cm}\displaystyle C\int \limits_0^t \frac{1}{\sqrt{\tau}}\partial_{\tau}h(t-\tau) \ d\tau
 \end{array}
\end{equation}
\begin{equation}\label{3.15}
 \begin{array}{rcl}
   \partial_tz(t,0)\hspace{-0.3cm}&=&\hspace{-0.3cm}\displaystyle C \int \limits_0^t \frac{1}{\sqrt{\tau}}\partial_{\tau}h(t-\tau) d\tau
 \end{array}
\end{equation}
by the conditions on $h$ we have the estimates 
\begin{equation}
|z(t,0)|+|\partial_tz(t,0)|+\left| \partial _{x}z\left( t,0\right) \right| \leq C\rho^\rho \sqrt{t}.
\label{2.19}
\end{equation}
We apply (\ref{2.19}) to (\ref{2.6}), (\ref{2.8}) and (\ref{2.12}) to get 
\begin{equation}
\left\Vert z\left( t\right) \right\Vert _{\mathbf{L}^{2}\left( \mathbf{R}%
^{+}\right) }^{2}\leq \rho ^{p}\int_{0}^{t}\sqrt{\tau }\left\vert h\left(
\tau \right) \right\vert d\tau \leq \frac{2}{3}C\rho ^{2p}t^{\frac{3}{2}},  \label{l}
\end{equation}
\begin{eqnarray}
\left\Vert \partial _{x}z\left( t\right) \right\Vert _{\mathbf{L}^{2}\left( 
\mathbf{R}^{+}\right) }^{2} &\leq &2 C \rho ^{p}\int_{0}^{t}\tau
d\tau  \leq \rho ^{2p}t^2
\label{l0}
\end{eqnarray}
\begin{eqnarray}
\left\Vert \partial _{t}z\left( t\right) \right\Vert _{\mathbf{L}^{2}\left( 
\mathbf{R}^{+}\right) }^{2} &\leq &2\rho ^{p}\int_{0}^{t}\sqrt{\tau}\left\vert \partial _{\tau }h\left( \tau
\right) \right\vert d\tau  \leq  C \rho ^{2p}t^{\frac{3}{2}},
\label{l1}
\end{eqnarray}
From the equation for $z$ we also have 
\begin{equation*}
\left\Vert \partial _{x}^{2}z\left( t\right) \right\Vert _{\mathbf{L}%
^{2} }\leq 2\left\Vert \partial _{t}z\left(
t\right) \right\Vert _{\mathbf{L}^{2} }.
\end{equation*}
Therefore we get 
\begin{eqnarray}
&&\left\Vert z\left( t\right) \right\Vert _{\mathbf{L}^{2}\left( \mathbf{R}%
^{+}\right) }+\left\Vert \partial _{x}z\left( t\right) \right\Vert _{\mathbf{%
L}^{2} }  \notag 
+\left\Vert \partial _{t}z\left( t\right) \right\Vert _{\mathbf{L}%
^{2} }+\left\Vert \partial _{x}^{2}z\left(
t\right) \right\Vert _{\mathbf{L}^{2} }\leq
3\rho ^{p}t^{\frac{3}{2}}\left( 1+t^{\frac{1}{2}}\right) .  \label{2.18}
\end{eqnarray}
By (\ref{2.14}) and (\ref{2.19}) 
\begin{eqnarray*}
\left\Vert Jz\left( t\right) \right\Vert _{\mathbf{L}^{2}\left( \mathbf{R}%
^{+}\right) }^{2} &\leq &C \int_{0}^{t}\left\vert \tau z\left( \tau ,0\right)
+2\tau ^{2}\partial _{\tau }z\left( \tau ,0\right) \right\vert \left\vert
z_x(\tau,0) \right\vert d\tau \\
&\leq &C \rho ^{2p}\left( \frac{t^3}{3}+2\frac{t^4}{2}\right) .
\end{eqnarray*}

Hence, we have the lemma.
\end{proof}

Let us consider the estimate of solutions to the homogeneous problem \eqref{2.3}. We have

\begin{lemma}
 We assume that\label{Lemma hom.}
 \begin{equation}
  \left\Vert u_{0}\right\Vert _{\mathbf{L}^{2}}^{2}+\left\Vert \partial _{x}u_{0}\right\Vert _{\mathbf{L}^{2} }^{2}+\left\Vert xu_{0}\right\Vert _{\mathbf{L}^{2} }^{2}+\|u_0\|_{\esp^\infty}^2\leq \varepsilon ^{2} 
 \end{equation}
and 
\begin{equation*}
\left\Vert f\left( t\right) \right\Vert _{%
\mathbf{L}^{2}\mathbf{L}^{\infty}_T }+
\left\Vert \partial _{x}f\left( t\right) \right\Vert _{\mathbf{L}^{2}\mathbf{L}^{\infty}_T }+
\left\Vert xf\left(t\right) \right\Vert _{\mathbf{L}^{2}\mathbf{L}^\infty_T}\leq
\delta .
\end{equation*}
Then the solution of \eqref{2.3} is given by 
\begin{equation}\label{t}
 w(t)=U(t)u_0+\int \limits_{0}^{t}U(t-\tau)f(\tau) d\tau,
\end{equation}
where
\begin{equation}\label{UM}
 U(t)\phi=\mathcal B^{-1}\mathcal{F}_se^{\frac{i}{2} p^2t }\mathcal{F}_s\mathcal B \phi, \ \ \mathcal{B}=(1+\alpha \partial_x).  
\end{equation}
 Moreover the solution $w_M$ of \eqref{2.3} satisfies 
 \begin{equation}\label{des}
  \|w\|_{\mathbf X_T}^2\leq 3C\varepsilon^2+C\delta \epsilon T +C \delta T^2.
 \end{equation}

\end{lemma}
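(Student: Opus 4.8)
The plan is to establish the two assertions of the lemma in turn: first the solution formula \eqref{t}--\eqref{UM}, and then the energy bound \eqref{des}.

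For the representation formula I would exploit that $\mathcal{B}=1+\alpha\partial_x$ has constant coefficients and hence commutes with $L=i\partial_t+\frac12\partial_x^2$. Setting $v=\mathcal{B}w$, the homogeneous mixed boundary condition $w(t,0)+\alpha w_x(t,0)=0$ is precisely $v(t,0)=\mathcal{B}w(t,0)=0$, so $v$ satisfies a \emph{Dirichlet} condition at $x=0$, while $Lv=\mathcal{B}Lw=\mathcal{B}f$ and $v(0,x)=\mathcal{B}u_0$. Since the Fourier sine transform $\mathcal{F}_s$ diagonalizes $\partial_x^2$ on the half line under Dirichlet data, the $v$-problem is solved by $v(t)=\mathcal{F}_s e^{\frac{i}{2}p^2 t}\mathcal{F}_s\mathcal{B}u_0+\int_0^t\mathcal{F}_s e^{\frac{i}{2}p^2(t-\tau)}\mathcal{F}_s\mathcal{B}f(\tau)\,d\tau$. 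Applying $\mathcal{B}^{-1}$, whose boundedness and commutation $[\partial_x,\mathcal{B}^{-1}]=0$ come from Lemma \ref{Lemma 1.3}, returns $w=\mathcal{B}^{-1}v$, which is exactly \eqref{t} with $U(t)$ as in \eqref{UM}; a direct differentiation in the spirit of the proof of Lemma \ref{Lemma nonhom} then confirms $Lw=f$, $w(0)=u_0$, and the mixed boundary condition.

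For the energy bound I would assemble the five pieces of $\|w\|_{\mathbf{X}}^2$ from the differential identities already derived above. The $\mathbf{L}^2$ estimate \eqref{2.5} is immediate, since the boundary contribution is $-\alpha\,\mathrm{Im}\,|\partial_x w(t,0)|^2=0$, after which $\|f(\tau)\|_{\mathbf{L}^2}\le\delta$ closes it. For $\|\partial_x w\|_{\mathbf{L}^2}^2$ and $\|Jw\|_{\mathbf{L}^2}^2$ I would rewrite the boundary terms in \eqref{2.7} and \eqref{2.15} using the boundary relation $w(\tau,0)=-\alpha\partial_x w(\tau,0)$ together with the equation $\partial_x^2 w=2f-2i\partial_t w$ to express every boundary value through $w(\tau,0)$, $\partial_x w(\tau,0)$ and $f(\tau,0)$, and then control these traces by the second estimate of Lemma \ref{Lemma 1.1}, $|\phi(0)|\le C\|\phi\|_{\mathbf{L}^2}^{1/2}\|\partial_x\phi\|_{\mathbf{L}^2}^{1/2}$, which gives $|w(\tau,0)|,\,|\partial_x w(\tau,0)|\le C\|w\|_{\mathbf{X}}$ and $|f(\tau,0)|\le C\delta$. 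The $\partial_t w$ estimate \eqref{2.91} uses the identity $i\partial_t w(0)=-\frac12\partial_x^2 u_0+\lambda f(0)$ to bound the initial term by the data, and finally $\|\partial_x^2 w\|_{\mathbf{L}^2}\le 2\|f\|_{\mathbf{L}^2}+2\|\partial_t w\|_{\mathbf{L}^2}$ recovers the second-derivative piece directly from the equation.

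The nonlinear integrals $\int_0^t\|\partial_x f\|\|\partial_x w\|$, $\int_0^t\|\partial_\tau f\|\|\partial_\tau w\|$ and $\int_0^t\|Jf\|\|Jw\|$ I would bound by factoring out $\sup_\tau(\|f\|_{\mathbf{L}^2}+\|\partial_x f\|_{\mathbf{L}^2}+\|Jf\|_{\mathbf{L}^2})\le\delta$ and $\|w\|_{\mathbf{X}_\tau}\le\|w\|_{\mathbf{X}_T}$, producing factors $\delta\,\|w\|_{\mathbf{X}_T}\,T$ and, from the time-weighted boundary traces, the factor $\delta T^2$; running a standard continuity (bootstrap) argument in which $\|w\|_{\mathbf{X}_T}$ is controlled by $\sim\varepsilon$ on the right then yields \eqref{des}. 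I expect the genuine difficulty to lie in the boundary terms of the $Jw$ identity \eqref{2.15}, where the weights $\tau$ and $\tau^2$ multiply boundary traces: there one must simultaneously invoke the boundary condition, the equation, and the trace estimate, and verify that the surviving real contributions (of the form $\alpha\tau|\partial_x w(\tau,0)|^2$ and $\alpha\tau^2\partial_\tau|\partial_x w(\tau,0)|^2$) either reduce by an integration by parts in time or are absorbed into the $\delta\varepsilon T$ and $\delta T^2$ terms.
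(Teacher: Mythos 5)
Your derivation of the representation formula \eqref{t}--\eqref{UM} is sound and in fact more explicit than the paper's own treatment (which simply invokes the method of \cite{Kaikina}): conjugating by $\mathcal{B}$, so that $v=\mathcal{B}w$ solves a Dirichlet problem diagonalized by $\mathcal{F}_s$, and then returning via $\mathcal{B}^{-1}$ is exactly the structure behind \eqref{UM}. Likewise, your boundary-term algebra (substituting $\partial_x w(\tau,0)=-\frac{1}{\alpha}w(\tau,0)$ and $\partial_x^2w=2f-2i\partial_tw$ at $x=0$) coincides with the paper's. The genuine gap is in how you then close the estimates: you bound the traces by $|w(\tau,0)|,\,|\partial_xw(\tau,0)|\le C\|w\|_{\mathbf{X}}$ via Lemma \ref{Lemma 1.1} and appeal to a ``standard continuity (bootstrap) argument.'' This cannot work as stated. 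Problem \eqref{2.3} is \emph{linear} in $w$ with external forcing $f$, so there is no smallness or superlinearity in $w$ to exploit: inserting your trace bound, the boundary contribution $\frac{1}{\alpha}|w(t,0)|^2$ in the $\partial_xw$-identity becomes $C_1\|w\|_{\mathbf{X}_T}^2$ with $C_1\sim C/|\alpha|$ in no way small, and the self-referential inequality $\|w\|_{\mathbf{X}_T}^2\le C\varepsilon^2+C_1\|w\|_{\mathbf{X}_T}^2+C\delta T\|w\|_{\mathbf{X}_T}+\cdots$ yields nothing when $C_1\ge1$; a continuity argument excludes no values. The situation is worse in \eqref{2.15}, where the traces carry the weights $\tau$ and $\tau^2$, so your bound produces terms of size $T^2\|w\|_{\mathbf{X}_T}^2$ that cannot be absorbed at all.

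What the paper does instead, and what is missing from your proposal, is to estimate the trace by the \emph{data} rather than by the solution: since $\left.|U(t)\phi|\right|_{x=0}\le\|\phi\|_{\mathbf{H}^{1}}$, Duhamel's formula \eqref{t} gives $|w(t,0)|\le\epsilon+\delta t$, a bound completely independent of $\|w\|_{\mathbf{X}_T}$. Feeding this into the boundary terms of \eqref{2.7} and \eqref{2.15} is precisely what produces the right-hand side $3C\varepsilon^2+C\delta\epsilon T+C\delta T^2$ of \eqref{des} with no quadratic self-reference. (For the $\partial_xw$-piece alone your route could be repaired: keep the product structure $|w(t,0)|^2\le C\|w(t)\|_{\mathbf{L}^2}\|\partial_xw(t)\|_{\mathbf{L}^2}$, apply Young's inequality, absorb $\eta\|\partial_xw(t)\|_{\mathbf{L}^2}^2$ into the left side at the same time $t$, and use the already-established $\mathbf{L}^2$ bound. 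But no analogous absorption exists for the time-weighted $J$-boundary terms, so the solution-independent trace bound coming from the representation formula is really needed there.)
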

\begin{proof}
By a similar method that was applied in \cite{Kaikina} we obtain  that the solution is 
given by \eqref{t}.
We have by (\ref{2.5}), \eqref{2.91} and Lemma \ref{Lemma 1.1}
\begin{equation} \label{L2w}
 \begin{array}{l}
  \left\| w\left( t\right) \right\| _{\mathbf{L}^{2} }^2+\|\partial_t w\|_{\esp^2}^2
\leq \left\| u_{0}\right\| _{\mathbf{L}^{2}}^2+\|\partial^2_x u_0\|_{\esp^2}\\
\hspace{3.5cm}\displaystyle +C\int \limits_{0}^t\left(\|f(\tau)\|_{\esp^{2}}\|w(\tau)\|_{\esp^2}+\|\partial_\tau f(\tau)\|_{\esp^{2}}\|\partial_\tau w(\tau)\|_{\esp^2}\right)d\tau\\
\hspace{3cm}\leq \varepsilon^2 +C\delta\|w\|_{\mathbf X_T}T
 \end{array}
\end{equation}
We also have by the identity $i\partial _{t}w+\frac{1}{2}\partial
_{x}^{2}w=f $ and (\ref{2.91}) 
\begin{equation*}\label{Lwx}
\left\| \partial _{x}^{2}w\left( t\right) \right\| _{\mathbf{L}^{2}}\leq 2\left\| \partial _{t}w\left( t\right) \right\|
_{\mathbf{L}^{2} }+2\delta T\leq \left\|
\partial _{x}^{2}u_{0}\right\| _{\mathbf{L}^{2}
}+3\delta T.
\end{equation*}
To estimate $\|\partial_xw\|_{\esp^2}$ and $\|Jw\|_{\esp^2}$,
we need a estimation for  $w(t,0)$. 
SInce  $|U(t)\phi\mid_{x=0} |\leq\|\phi\|_{\mathbf H^{1}} $ via \eqref{t} we conclude
\begin{equation}
|w(t,0)|\leq \epsilon +\delta t.
\end{equation}
From boundary condition we have 
\begin{equation*}
\text{Im }\left( \partial_x^2 w(\tau,0)\overline{\partial_xw(\tau,0)}\right)=\frac{2}{\alpha}\text{Im} \left(f(\tau,0)\overline{w(\tau,0)} \right)+\frac{1}{\alpha}\frac{d}{dt}|w(\tau,0)|^2,
\end{equation*}
therefore
\begin{equation}
\int\limits_0^t \text{Im }\left( \partial_x^2 w(\tau,0)\overline{\partial_xw(\tau,0)}\right)=\frac{1}{\alpha}|w(\tau,0)|^2+\frac{2}{\alpha}\int\limits_0^t \text{Im } \left(f(\tau,0)\overline{w(\tau,0)} \right)d\tau,
\end{equation}
thus \eqref{2.7} can be rewrite as
\begin{equation}
\begin{array}{l}
\left\| \partial _{x}w\left( t\right) \right\| _{%
\mathbf{L}^{2} }^{2}\\
\displaystyle \leq \frac{C}{|\alpha|}|w(\tau,0)|^2+\frac{2}{\alpha}\int\limits_0^t |f(\tau,0){w(\tau,0)} |d\tau+C \int_{0}^{t}\|\partial_x f(\tau)\|_{\esp^ 2}\|\partial _x w\|_{\esp^ 2}d\tau
\end{array}
\end{equation}
therefore
\begin{equation}
\left\| \partial _{x}w\left( t\right) \right\| _{%
\mathbf{L}^{2} }^{2}\leq C\epsilon^2+C\delta T^2+C\delta T\|w\|_{\mathbf X_T}.
\end{equation}
In the same way we can prove
\begin{equation}
\left\| Jw\left( t\right) \right\| _{\mathbf{L}%
^{2}}^{2}\leq \left\Vert xu_{0}\right\Vert _{\mathbf{L}^{2}\left( 
\mathbf{R}^{+}\right) }^{2}+C\delta\epsilon (T+T^2).
\end{equation}
Therefore we have the lemma.
\end{proof}

We consider the estimate of solutions to \eqref{2.2}.

\begin{lemma}
\label{Lemma 2.5} We assume that
 \begin{equation}
  \left\Vert u_{0}\right\Vert _{\mathbf{L}^{2}}^{2}+\left\Vert \partial _{x}u_{0}\right\Vert _{\mathbf{L}^{2} }^{2}+\left\Vert xu_{0}\right\Vert _{\mathbf{L}^{2} }^{2}+\|u_0\|_{\esp^\infty}^2\leq \varepsilon ^{2} ,
 \end{equation}
 \begin{equation*}
\sup_{t\in \left[ 0,T\right] }\left| h\left( t\right) \right| +\sup_{t\in \left[ 0,T\right] }\left| \partial _{t}h\left( t\right) \right| \leq \rho^p .
\end{equation*}
and
\begin{equation*}
\left\Vert u^{\left( n\right) }\right\Vert _{\mathbf{X}_{T}}\leq
3\varepsilon,
\end{equation*}
with
$$T=O\left(min \left\{\frac{1}{4C|\lambda|p2^{p-1}3^p\varepsilon^{p-1}}, \frac{1}{4C\varepsilon^{\frac{p-1}{2}}},\frac{1}{8C\varepsilon^{2(p-1)}}\right\} \right)$$
Then we have for the solution of \eqref{2.2}
\begin{equation*}
\left\Vert u^{\left( n+1\right) }\right\Vert _{\mathbf{X}_{T}}\leq
3\varepsilon .
\end{equation*}
\end{lemma}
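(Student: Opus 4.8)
The plan is to reuse the splitting already introduced for \eqref{2.2}: set $u^{(n+1)}=v=w+z$, where $w$ solves the homogeneous-boundary problem \eqref{2.3} with forcing $f=\lambda \left\vert u^{(n)}\right\vert ^{p-1}u^{(n)}$ and initial datum $u_{0}$, and $z$ solves the zero-forcing problem \eqref{2.4} carrying the boundary datum $h$. Since $w(t,0)+\alpha w_{x}(t,0)=0$ and $z(t,0)+\alpha z_{x}(t,0)=h(t)$, the sum obeys $Lv=f$, $v(0)=u_{0}$ and $v(t,0)+\alpha v_{x}(t,0)=h(t)$, so $v$ is exactly the solution of \eqref{2.2}. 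Hence $\left\Vert u^{(n+1)}\right\Vert _{\mathbf{X}_{T}}\leq \left\Vert w\right\Vert _{\mathbf{X}_{T}}+\left\Vert z\right\Vert _{\mathbf{X}_{T}}$, and the whole task reduces to feeding Lemma~\ref{Lemma hom.} and Lemma~\ref{Lemma nonhom} with the correct values of $\delta $ and $\rho $ and then choosing $T$ small.

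First I would bound the nonlinear forcing. Applying Lemma~\ref{Lemma 1.2} with $v=u^{(n)}$ and invoking the induction hypothesis $\left\Vert u^{(n)}\right\Vert _{\mathbf{X}_{T}}\leq 3\varepsilon $, I would estimate $\delta :=\left\Vert f\right\Vert _{\esp^{2}\esp_{T}^{\infty }}+\left\Vert \partial _{x}f\right\Vert _{\esp^{2}\esp_{T}^{\infty }}+\left\Vert xf\right\Vert _{\esp^{2}\esp_{T}^{\infty }}$. The crucial choice here is to use the \emph{second} (time-singularity-free) form of each bound in Lemma~\ref{Lemma 1.2}, i.e. the version resting on $\left\Vert \partial _{x}v\right\Vert _{\esp^{2}}$ through the second estimate of Lemma~\ref{Lemma 1.1}, because the factor $t^{-\frac{p-1}{2}}$ of the first form is not integrable near $t=0$ once $p\geq 3$. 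For the weight $x$ I would write $xf=Jf-it\partial _{x}f$, so that $\left\Vert xf\right\Vert _{\esp^{2}}\leq \left\Vert Jf\right\Vert _{\esp^{2}}+T\left\Vert \partial _{x}f\right\Vert _{\esp^{2}}$. This produces $\delta \leq C\left\vert \lambda \right\vert p2^{p-1}(3\varepsilon )^{p}(1+T)$, i.e. a forcing of size $\varepsilon ^{p}$ up to harmless powers of $T$.

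Next I would substitute this $\delta $ into Lemma~\ref{Lemma hom.}, giving $\left\Vert w\right\Vert _{\mathbf{X}_{T}}^{2}\leq 3C\varepsilon ^{2}+C\delta \varepsilon T+C\delta T^{2}$, and substitute $\rho ^{p}\geq \sup_{[0,T]}\left( \left\vert h\right\vert +\left\vert \partial _{t}h\right\vert \right) $ into Lemma~\ref{Lemma nonhom}, giving $\left\Vert z\right\Vert _{\mathbf{X}_{T}}\leq C\rho ^{p}T^{\frac{3}{2}}\left( 1+T^{\frac{3}{2}}+T^{\frac{5}{2}}\right) $. Adding the two and expanding, $\left\Vert u^{(n+1)}\right\Vert _{\mathbf{X}_{T}}^{2}$ is a leading term $\leq 3C\varepsilon ^{2}$ plus remainders each carrying a strictly positive power of $T$ times $\varepsilon ^{p+1}$, $\varepsilon ^{p}$ or $\rho ^{p}$. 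The three entries of the minimum defining $T$ are exactly calibrated to push these below $\varepsilon ^{2}$: the entry $\propto \varepsilon ^{-(p-1)}$ absorbs the nonlinear term $C\delta \varepsilon T\sim \varepsilon ^{p+1}T$; the entry $\propto \varepsilon ^{-(p-1)/2}$ controls the fastest boundary growth, which comes from $\left\Vert Jz\right\Vert _{\esp^{2}}^{2}\sim \rho ^{2p}t^{4}$ under $\rho \lesssim \varepsilon $; and the entry $\propto \varepsilon ^{-2(p-1)}$ dominates the remaining, higher-power $T$-remainders. Choosing $T$ as in the hypothesis makes every $T$-dependent term at most a fixed fraction of $\varepsilon ^{2}$, so the total is at most $9\varepsilon ^{2}$ and $\left\Vert u^{(n+1)}\right\Vert _{\mathbf{X}_{T}}\leq 3\varepsilon $, closing the induction.

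The hard part is not any single estimate, since those are already delivered by Lemma~\ref{Lemma hom.} and Lemma~\ref{Lemma nonhom}, but the uniform-in-$n$ bookkeeping in the last step: one must track, across all five components of the $\mathbf{X}$-norm (the $\esp^{2}$, $\partial _{x}$, $\partial _{t}$, $\partial _{x}^{2}$ and $J$ pieces), exactly which power of $\varepsilon $ and which power of $T$ each remainder carries, and verify that the three explicit thresholds in the minimum for $T$ simultaneously dominate all of them, with $\rho \lesssim \varepsilon $ so that the boundary contribution is absorbed on the same footing. Getting these exponents to line up is the only genuinely delicate point; once they do, the closure of the Picard scheme is immediate.
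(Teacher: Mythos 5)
Your proposal follows essentially the same route as the paper's own proof: the same decomposition $u^{(n+1)}=w+z$ into the homogeneous-boundary and zero-data pieces, the same use of Lemma~\ref{Lemma 1.2} with the induction hypothesis to produce a forcing of size $O(\varepsilon^{p})$, the same invocation of Lemma~\ref{Lemma hom.} and Lemma~\ref{Lemma nonhom}, and the same absorption of all $T$-dependent remainders via the stated minimum. Your added care about which branch of Lemma~\ref{Lemma 1.2} to use (the non-singular-in-time one) and the identity $xf=Jf-it\partial_{x}f$ only makes explicit what the paper's terse proof leaves implicit, so the argument is correct and matches the paper.
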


\begin{proof}
 In \cite{EsHaKa} was proved that $f(u)=\lambda|u^{(n)}|^pu^{(n)}$, by Lemma \eqref{Lemma 1.2}  we have 
 $$\|f\|_{\esp^2}+\|\partial_x f\|_{\esp^ 2}+\|\mathcal{J}f\|_{\esp^2}\leq C \|u^{(n)}\|_{\mathbf{X}_T}\leq C|\lambda|p2^{p-1}3^p\varepsilon^p.$$ 
 As our solution of the IBVP can be factored as $u^{(n)}=w^{(n)}+z$, from Lemmas \eqref{Lemma hom.} and \eqref{Lemma nonhom} with $\rho^p=|\lambda|p2^{p-1}3^p\varepsilon^p$ we obtain
 \begin{equation}
 \begin{array}{rcl}
  \|u^{(n+1)}\|_{ \mathbf{X}_{T}}&\leq &\|w^{(n+1)}\|_{\mathbf{X}_T}+\|z\|_{\mathbf{X}_T}\\
  &\leq&2C\varepsilon+C|\lambda|p2^{p-1}3^p\varepsilon^pT+4C\varepsilon^p`T^{\frac{3}{2}}(1+T^{\frac{3}{2}}+T^{\frac{5}{2}})\\
  &<&3\varepsilon
 \end{array}
 \end{equation}
here, we used the assumptions on $T$.
\end{proof}

\begin{lemma}\label{Lemma 2.6}
 Let $\{u^{n+1}\}$ be the solution of  \eqref{2.2} satisfying $\|u^{k}\|_{\mathbf{X}^T}\leq 3\varepsilon$ for any $k$.
 Then there exists a time $T>0$ such that the difference of solutions $ X^{(n+1)}:=u^{(n+1)}-u^{(n)}=w^{(n+1)}-w^{(n)}$ satisfies 
 \begin{equation}
  \|X^{(n+1)}\|_{\mathbf X_T}\leq \frac{1}{2}\|X^{(n)}\|_{\mathbf X_{T}},  \ \ \text{for }n\geq 3.
 \end{equation}
\end{lemma}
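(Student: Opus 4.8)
The plan is to reduce the contraction estimate to the homogeneous-boundary bounds of Lemma~\ref{Lemma hom.} by exploiting the cancellation already indicated in the statement. Since the component $z$ solving \eqref{2.4} depends only on the boundary datum $h$ and not on the iteration index, it drops out of the difference, so that $X^{(n+1)}=w^{(n+1)}-w^{(n)}$ solves
\[
LX^{(n+1)}=g,\qquad X^{(n+1)}(0,x)=0,\qquad X^{(n+1)}(t,0)+\alpha\,\partial_x X^{(n+1)}(t,0)=0,
\]
with source $g:=\lambda\bigl(|u^{(n)}|^{p-1}u^{(n)}-|u^{(n-1)}|^{p-1}u^{(n-1)}\bigr)$. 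This is exactly the problem analysed in Lemma~\ref{Lemma hom.}, but now with \emph{zero} initial data, so the $u_0$-contributions vanish and only the source survives. The index restriction $n\ge 3$ simply ensures that both $u^{(n)}$ and $u^{(n-1)}$ are genuine iterates of \eqref{2.2}.

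First I would rerun the energy identities \eqref{2.5}, \eqref{2.91}, \eqref{2.7} and \eqref{2.11} with $X^{(n+1)}$ in place of $w$ and $g$ in place of $f$. Because $X^{(n+1)}$ satisfies the homogeneous Robin condition, the boundary term collapses exactly as for $w$, namely $\mathrm{Im}\bigl(\partial_x X^{(n+1)}(t,0)\overline{X^{(n+1)}(t,0)}\bigr)=-\alpha\,\mathrm{Im}\,|\partial_x X^{(n+1)}(t,0)|^2=0$; the boundary contributions in the $\partial_x$- and $J$-estimates are treated by the same algebraic manipulation used in Lemma~\ref{Lemma hom.}, together with $|X^{(n+1)}(t,0)|\le\int_0^t\|g(\tau)\|_{\mathbf H^1}\,d\tau$. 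Using $X^{(n+1)}(0,\cdot)=0$, the four identities combine into
\[
\|X^{(n+1)}\|_{\mathbf X_T}^2\le C\,\Delta\,T\,\|X^{(n+1)}\|_{\mathbf X_T}+C\,\Delta^2\,T^2,
\]
where $\Delta:=\|g\|_{\esp^2\esp_T^\infty}+\|\partial_x g\|_{\esp^2\esp_T^\infty}+\|\partial_t g\|_{\esp^2\esp_T^\infty}+\|Jg\|_{\esp^2\esp_T^\infty}$.

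The heart of the argument is to bound $\Delta$ by a small multiple of $\|X^{(n)}\|_{\mathbf X_T}$. For this I would use the pointwise inequality $\bigl||a|^{p-1}a-|b|^{p-1}b\bigr|\le C\bigl(|a|^{p-1}+|b|^{p-1}\bigr)|a-b|$ and its derivative analogue, combined with the uniform-in-time control $\|u^{(k)}\|_{\esp^\infty}\le C\|\partial_x u^{(k)}\|_{\esp^2}^{1/2}\|u^{(k)}\|_{\esp^2}^{1/2}\le C\varepsilon$ furnished by the second estimate of Lemma~\ref{Lemma 1.1} and the hypothesis $\|u^{(k)}\|_{\mathbf X_T}\le 3\varepsilon$. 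The $\esp^2$-, $\partial_x$- and $\partial_t$-parts are then estimated as in Lemma~\ref{Lemma 1.2}, splitting for instance $\partial_x g$ into $\bigl(f'(u^{(n)})-f'(u^{(n-1)})\bigr)\partial_x u^{(n)}+f'(u^{(n-1)})\partial_x X^{(n)}$ and absorbing every $\esp^\infty$-factor through the preceding bound; this yields a contribution $\le C\varepsilon^{p-1}\|X^{(n)}\|_{\mathbf X_T}$.

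The main obstacle is the $J$-derivative term, since $J$ does not commute with the nonlinearity. Here I would invoke the identity from the proof of Lemma~\ref{Lemma 1.2}, $J(|v|^{p-1}v)=\tfrac{p+1}{2}|v|^{p-1}Jv-\tfrac{p-1}{2}|v|^{p-3}v^2\overline{Jv}$, applied to $u^{(n)}$ and $u^{(n-1)}$ and subtracted, which produces factors of the type $\bigl(|u^{(n)}|^{p-1}-|u^{(n-1)}|^{p-1}\bigr)Ju^{(n)}$, controlled by the Lipschitz bound and $\|Ju^{(n)}\|_{\esp^2}\le 3\varepsilon$, and factors $|u^{(n-1)}|^{p-1}JX^{(n)}$, controlled directly by $\|JX^{(n)}\|_{\esp^2}\le\|X^{(n)}\|_{\mathbf X_T}$; again all $\esp^\infty$-factors are $\le C\varepsilon$, giving $\|Jg\|_{\esp^2}\le C\varepsilon^{p-1}\|X^{(n)}\|_{\mathbf X_T}$. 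Hence $\Delta\le C\varepsilon^{p-1}\|X^{(n)}\|_{\mathbf X_T}$; inserting this into the energy inequality and solving the resulting quadratic inequality in $\|X^{(n+1)}\|_{\mathbf X_T}$ gives $\|X^{(n+1)}\|_{\mathbf X_T}\le C\varepsilon^{p-1}T\,\|X^{(n)}\|_{\mathbf X_T}$, and choosing $T$ as in Lemma~\ref{Lemma 2.5} forces the prefactor to be at most $\tfrac12$, which is the desired contraction.
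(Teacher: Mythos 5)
Your proposal is correct and takes essentially the approach the paper intends: the paper gives no argument of its own for this lemma (it defers to the analogous contraction estimate in \cite{EsHaKa}), and what you reconstruct --- cancellation of the boundary part $z$, the energy identities of Lemma \ref{Lemma hom.} applied to the difference $X^{(n+1)}$ with zero initial data and homogeneous Robin condition, Lipschitz bounds on the nonlinearity closed via Lemmas \ref{Lemma 1.1} and \ref{Lemma 1.2}, and smallness of $T$ --- is exactly that argument adapted to the present setting. The only points you leave implicit (that $g(0,x)=0$ because all iterates share the initial datum $u_0$, and that one may shrink $T$ to make the contraction constant at most $\tfrac12$) are at or above the level of detail the paper itself tolerates.
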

The proof of previous Lemma is similar to that found in \cite{EsHaKa}. From Lemmas \eqref{Lemma 2.5},
\eqref{Lemma 2.6}  and the contraction mapping principle, we have Theorem \eqref{Theorem 2}.

\section{Global existence in time of solutions \label{Section 4}}
We divide the solutions into two parts such that $u=w+z$ as in
Section \ref{Section 2}, where 
\begin{equation}
\left\{ 
\begin{array}{c}
Lw=\lambda \left\vert u\right\vert ^{2}u,\text{ }x\text{ }\mathbf{\in R}%
^{+},t>0, \\ 
w(0,x)=u_{0}(x),x\text{ }\mathbf{\in R}^{+}, \\ 
w\left( t,0\right) =0,\text{ }%
\end{array}%
\right.   \label{4.3}
\end{equation}%
\begin{equation}
\left\{ 
\begin{array}{c}
Lz=0,x\mathbf{\in R}^{+},t>0, \\ 
z(0,x)=0,x\text{ }\mathbf{\in R}^{+}, \\ 
z\left( t,0\right) =h\left( t\right) .\text{ }%
\end{array}%
\right.   \label{4.4}
\end{equation}
We introduce the functional space
$$\mathbf X_{T}=\left\{u(t)\in \mathbb{C}\left([0,T);\mathbf{H}^{1,0}  \cap \mathbf{H}^{0,1}(\mathbf{R}^+  )\right) ;\|u\|_{\mathbf{X}_T}<\infty\right\}$$
\begin{eqnarray*}
\left\Vert u\right\Vert _{\mathbf{X}_{T}}^{2} &=&\sup_{t\in \left[
0,T\right) }\left\langle t\right\rangle ^{-2\gamma }\left\Vert u\left(
t\right) \right\Vert _{\mathbf{X}}^{2} \\
&&+\sup_{t\in \left[ 0,T\right) }\left\Vert Ju\left( t\right) \right\Vert _{%
\mathbf{L}^{2} }^{2}\left\langle t\right\rangle
^{-\frac{1}{2}+2\gamma }+\sup_{t\in \left[ 0,T\right) }\left\Vert
u\left( t\right) \right\Vert _{\mathbf{L}^{\infty }\left( \mathbf{R}%
^{+}\right) }^{2}\left\langle t\right\rangle 
\end{eqnarray*}%
and 

\begin{eqnarray*}
\left\| u\left( t\right) \right\| _{\mathbf{X}}^{2} &=&\left\| \phi
\left( t\right) \right\| _{\mathbf{L}^{2}
}^{2}+\left\| \partial _{x}u\left( t\right) \right\| _{\mathbf{L}%
^{2} }^{2}+\left\| \partial _{x}^{2}u\left( t\right) \right\| _{\mathbf{L}%
^{2} }^{2}+\left\| \partial _{t}u\left(
t\right) \right\| _{\mathbf{L}^{2} }^{2}.
\end{eqnarray*}
 By Theorem \ref{Theorem 2}, we note that 
\begin{equation*}
\left\| u\right\| _{\mathbf{X}_{1}}\leq 3\varepsilon
\end{equation*}
if we take $\varepsilon $ small enough. We now prove that for any time $%
\widetilde{T},$ the estimate 
\begin{equation}
\left\| u\right\| _{\mathbf{X}_{\widetilde{T}}}^{2}<\varepsilon ^{\frac{4}{3}%
}  \label{1}
\end{equation}
holds. If the above estimate does not hold, then we can find a finite time $%
T $ such that 
\begin{equation}
\left\| u\right\| _{\mathbf{X}_{T}}^{2}=\varepsilon ^{\frac{4}{3}}.
\label{2}
\end{equation}
However Lemma \ref{Lemma 4.1} and Lemma \ref{Lemma 4.2} below show that $T$
satisfying (\ref{2}) does not exist. This is the desired contradiction.
Namely for any time $\widetilde{T},$ we have (\ref{1}).

\begin{lemma}\label{Lemma 4.1}
 Let $u$ be the solution of \eqref{1.1-1} satisfying $\|u\|_{\mathbf X_T}^2=\varepsilon^{\frac{4}{3}}$. Then we have
 \begin{equation*}
\sup_{t\in \left[ 0,T\right) }\left\langle t\right\rangle ^{-2\varepsilon
}\left\| u\left( t\right) \right\| _{\mathbf{X}}^{2}+\sup_{t\in \left[
0,T\right) }\left\langle t\right\rangle ^{-\frac{1}{2}+2\varepsilon }\left\|
Ju\left( t\right) \right\| _{\mathbf{L}^{2}}^{2}\leq C\varepsilon ^{2}.
\end{equation*}
\end{lemma}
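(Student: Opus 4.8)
The plan is to run the continuity (bootstrap) argument announced just before the lemma: assuming the borderline identity $\|u\|_{\mathbf X_T}^2=\varepsilon^{4/3}$, I would re-estimate each piece of the left-hand side and show it is in fact $\le C\varepsilon^2<\varepsilon^{4/3}$, which is the sought contradiction. I use the factorization $u=w+z$ of \eqref{4.3}--\eqref{4.4} and the three consequences of the bootstrap hypothesis,
$$\|u(t)\|_{\mathbf X}\le\varepsilon^{2/3}\langle t\rangle^{\gamma},\qquad \|Ju(t)\|_{\esp^2}\le\varepsilon^{2/3}\langle t\rangle^{\frac14-\gamma},\qquad \|u(t)\|_{\esp^\infty}\le\varepsilon^{2/3}\langle t\rangle^{-\frac12},$$
where $\gamma\;(\ge\varepsilon^{1/3}$, as in Theorem~\ref{Theorem 4}$)$ is the weight of the $\mathbf X_T$-norm.

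The first step is to record the decay of the cubic nonlinearity $f=\lambda|u|^2u$, the key being to estimate it through the \emph{bootstrapped} $\esp^\infty$-bound rather than by reconstructing $\esp^\infty$ from $Ju$ via Lemma~\ref{Lemma 1.1}. As in the proof of Lemma~\ref{Lemma 1.2},
$$\|f\|_{\esp^2}+\|\partial_xf\|_{\esp^2}+\|\partial_tf\|_{\esp^2}\le C\|u\|_{\esp^\infty}^2\,\|u\|_{\mathbf X}\le C\varepsilon^2\langle t\rangle^{-1+\gamma},\qquad \|Jf\|_{\esp^2}\le C\|u\|_{\esp^\infty}^2\|Ju\|_{\esp^2}\le C\varepsilon^2\langle t\rangle^{-\frac34-\gamma}.$$
These are the sharp rates; the bound obtained from the first line of Lemma~\ref{Lemma 1.2} would lose a factor $\langle t\rangle^{1/4}$ and the $J$-estimate would not close.

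Next I feed these into the energy identities of Section~\ref{Section 2}. For $\|w\|_{\esp^2}$, $\|\partial_tw\|_{\esp^2}$ and, through $\partial_x^2w=2f-2i\partial_tw$, $\|\partial_x^2w\|_{\esp^2}$, the identities \eqref{2.5} and \eqref{2.91} produce a Gronwall bound whose source is $\int_0^t\|f\|_{\esp^2}\|w\|_{\esp^2}d\tau\le C\varepsilon^{8/3}\int_0^t\langle\tau\rangle^{-1+2\gamma}d\tau\le C\gamma^{-1}\varepsilon^{8/3}\langle t\rangle^{2\gamma}$; after the weight $\langle t\rangle^{-2\gamma}$ this is $C\gamma^{-1}\varepsilon^{8/3}\le C\varepsilon^{7/3}\le C\varepsilon^2$, which is exactly where the hypothesis $\gamma\ge\varepsilon^{1/3}$ is spent (absorbing the logarithmically divergent $\gamma^{-1}$). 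The interior part of the $J$-identity \eqref{2.11} contributes $\int_0^t\|Jf\|_{\esp^2}\|Jw\|_{\esp^2}d\tau\le C\varepsilon^{8/3}\int_0^t\langle\tau\rangle^{-\frac12-2\gamma}d\tau\le C\varepsilon^{8/3}\langle t\rangle^{\frac12-2\gamma}$, which the weight $\langle t\rangle^{-\frac12+2\gamma}$ converts into $C\varepsilon^{8/3}\le C\varepsilon^2$ with no loss. For $\|\partial_xw\|_{\esp^2}$ I use \eqref{2.7} rewritten through the boundary condition as in Lemma~\ref{Lemma hom.}, whose boundary contribution $\tfrac{C}{|\alpha|}|w(t,0)|^2+\tfrac{C}{|\alpha|}\int_0^t|f(\tau,0)||w(\tau,0)|\,d\tau$ is controlled by the trace bound $|w(t,0)|\le\|u_0\|_{\mathbf H^{1}}+\int_0^t\|f\|_{\mathbf H^{1}}d\tau\le C\varepsilon\langle t\rangle^{\gamma}$ coming from \eqref{t} together with $|f(\tau,0)|\le|\lambda|\|u\|_{\esp^\infty}^3\le C\varepsilon^2\langle\tau\rangle^{-3/2}$. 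The inhomogeneous part $z$ is handled by \eqref{bfz}--\eqref{3.15}: inserting the decay hypotheses on $h,\partial_th,\partial_t^2h$ into the trace formulas and splitting each convolution at $\tau=t/2$ gives decaying bounds for $z(t,0),\partial_xz(t,0),\partial_tz(t,0)$, which fed into \eqref{2.6}, \eqref{2.8}, \eqref{2.10bb} and \eqref{2.140} bound $\|z\|_{\mathbf X}$ and $\|Jz\|_{\esp^2}$ by $C\varepsilon$ times powers of $\langle t\rangle$ at least as favorable as the target weights.

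The main obstacle is the boundary part of the $J$-estimate, i.e.\ the terms of \eqref{2.13} that survive in \eqref{2.15}, namely $\int_0^t|\tau w(\tau,0)+\tau^2\partial_x^2w(\tau,0)|\,|\partial_xw(\tau,0)|\,d\tau$, which carry the growing factors $\tau$ and $\tau^2$. These can be absorbed into the admissible growth $\langle t\rangle^{\frac12-2\gamma}$ only if the relevant traces decay fast enough. Using the homogeneous normalization $w(\tau,0)=0$ of \eqref{4.3}, the first term drops and $\partial_x^2w(\tau,0)=2f(\tau,0)$, while $f(\tau,0)=\lambda|u(\tau,0)|^2u(\tau,0)$ inherits the fast decay of the boundary data through $u(\tau,0)$; the remaining integral $\int_0^t\tau^2|f(\tau,0)||\partial_xw(\tau,0)|\,d\tau$ must then be shown to stay below $C\varepsilon^2\langle t\rangle^{\frac12-2\gamma}$. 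Producing a Neumann-trace estimate for $\partial_xw(\tau,0)$ with enough decay (of order $\langle\tau\rangle^{-1/2}$) to beat the factor $\tau^2$ is the delicate step, and it is precisely there that the improved regularity and decay assumptions on $h$, in particular the control of $\partial_t^2h$, are needed. Collecting the $w$- and $z$-contributions then yields the stated bound $\le C\varepsilon^2$.
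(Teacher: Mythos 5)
Your treatment of the interior terms (estimating the cubic nonlinearity through the bootstrapped $\esp^\infty$ bound, the Gronwall step for $\|w\|_{\esp^2}$, $\|\partial_t w\|_{\esp^2}$, $\|\partial_x^2 w\|_{\esp^2}$, and the $z$-part via the trace formulas \eqref{3.13}--\eqref{3.15} split at $\tau=t/2$) matches the paper's proof. But there is a genuine gap exactly at the point you yourself call ``the main obstacle'': the boundary contribution to the $J$-estimate \eqref{2.15} for $w$. Your resolution --- ``using the homogeneous normalization $w(\tau,0)=0$ of \eqref{4.3}, the first term drops and $\partial_x^2 w(\tau,0)=2f(\tau,0)$'' --- is not available in this paper. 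The boundary condition here is the mixed (Robin) one, $\mathcal{B}w(t,0)=w(t,0)+\alpha\partial_x w(t,0)=0$ (the display \eqref{4.3} writing $w(t,0)=0$ is a typo carried over from the Dirichlet paper \cite{EsHaKa}); the Dirichlet trace $w(t,0)$ is \emph{not} zero, and indeed your own estimate of $\|\partial_x w\|_{\esp^2}$ a few lines earlier uses the Robin identity of Lemma~\ref{Lemma hom.}, which presupposes $w(t,0)\neq 0$. So the proposal is internally inconsistent, and the term $\int_0^t\tau\,|w(\tau,0)|\,|\partial_x w(\tau,0)|\,d\tau$ does not drop.

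Moreover, the crude trace bound you do establish, $|w(t,0)|\leq C\varepsilon\langle t\rangle^{\gamma}$, cannot close this term: with $\partial_x w(\tau,0)=-w(\tau,0)/\alpha$ it produces $\int_0^t\tau|w(\tau,0)|^2d\tau\sim\varepsilon^2 t^{2+2\gamma}$, far above your own stated target $C\varepsilon^2\langle t\rangle^{\frac12-2\gamma}$. What the paper does instead --- and what is missing from your proposal --- is a genuine \emph{decay} estimate of the Dirichlet trace of $w$: using the factorization results \eqref{KR}, \eqref{KR2} from \cite{kaikinaRobin} (a stationary-phase representation of $U(t)$ whose leading term vanishes as $x\to 0$), applied both to the free part $U(t)u_0$ and to the Duhamel part via the computations \eqref{k0}--\eqref{k5}, the paper obtains \eqref{4.16}, i.e.\ $|w(t,0)|\lesssim t^{-\frac34}\|u_0\|_{\mathbf{H}^{0,1}}+t^{-\frac14}\|u\|_{\mathbf{X}_T}^3$, and only this time decay makes the boundary terms in \eqref{2.7} and \eqref{2.15} compatible with the weights appearing in Lemma~\ref{Lemma 4.1}. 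Note also that your closing suggestion --- that the assumptions on $h$, in particular on $\partial_t^2h$, would supply the missing trace decay for $w$ --- cannot work: $w$ solves the homogeneous-boundary problem, so its traces are governed by $u_0$ and the nonlinearity alone; $h$ enters only through $z$, whose treatment you already have.
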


\begin{proof}
In the same way as in the proof of Lemma \ref{Lemma hom.}, we get by the
energy method 

\begin{equation*}
 \begin{array}{rcl}
\left\| w\left( t\right) \right\| _{\esp^ 2}+\left\| \partial_t w\left( t\right) \right\| _{\esp^ 2}+\left\|\partial_x^2 w\left( t\right) \right\| _{\esp^ 2} &\leq &\displaystyle \|u_0\|_{\mathbf X}^ 2+\displaystyle  C \varepsilon^{\frac{2}{3}}\int \limits_{0}^t\|f(\tau)\|_{\mathbf X}d\tau,
 \end{array}
\end{equation*}
where $f(\tau)=|u|^2u.$ Since
$$\left\| f\left( \tau\right) \right\| _{\mathbf X }\leq C \langle \tau \rangle^{-1+\varepsilon}\|u\|_{\mathbf X_T}^3 $$
we have
\begin{equation}
   \left\| w\left( t\right) \right\| _{\esp^ 2}+\left\| \partial_t w\left( t\right) \right\| _{\esp^ 2}+\left\|\partial_x^2 w\left( t\right) \right\| _{\esp^ 2} \leq \|u_0\|_{\mathbf X}^ 2+C\langle t\rangle^{-2\varepsilon}\|u\|^4_{\mathbf X_T}.\label{4.7}
\end{equation}
From \eqref{2.7} and \eqref{2.15}, we observe that to estimate $\|\partial_x w\|_{\esp^ 2}$ and $\|J w\|_{\esp^ 2}$ we first need to calculate $w(t,0)$. 
In \cite{kaikinaRobin} was proved that
\begin{equation}
 \|J U(t)\phi\|_{\esp^2}\leq C \|\phi\|_{\mathbf{H}^{0,1}}.\label{KR}
\end{equation}
\begin{equation}\label{KR2}
U(t)u_0=\frac{1}{\sqrt{2it}}\Psi\left( \frac{x}{2\tau}\right) +Ct^{-\frac{3}{4}}\|JU(t)u_0\|_{\esp^ 2}
\end{equation} with 
$$\Psi(x,t)=e^{i\frac{x^2}{2t}}\left.\frac{1}{1-i\alpha x/2t}\mathcal{F}_s\mathcal{B}u_0\right|_{p=x/2t}.$$
Since $\lim \limits_{x\to 0}\Psi(x,t)=0$, by virtue of \eqref{t} and \eqref{KR2} we obtain
\begin{equation}
\begin{array}{l}\label{k0}
w(t,0)=\displaystyle t^{-\frac{3}{4}}\|JU(t)u_0\|_{\esp^ 2}+\lim \limits_{x\to 0}\int \limits_0^ t U(t-\tau)|u|^2u(\tau)d\tau
\end{array}
\end{equation}
From the definition of the operator $U$ given in \eqref{UM}
we have
\begin{equation}\label{k1}
\int \limits_{0}^t U(t-\tau)|u|^2u(\tau)d\tau=U(t)\psi, \ \ \psi:=\int\limits_0^ t \mathcal{B}^{-1}\mathcal{F}_s e^{\frac{i}{2} p^2\tau }\mathcal{F}_s\mathcal B |u|^2u(\tau)d\tau,
\end{equation}
newly, applying \eqref{KR2}
we obtain
\begin{equation}
\lim\limits_{x\to 0}\int \limits_{0}^t U(t-\tau)|u|^2u(\tau)d\tau=\lim\limits_{x\to 0}\frac{1}{\sqrt{2it}}e^{i\frac{x^2}{2t}}\left.\frac{1}{1-i\alpha x/2t}\mathcal{F}_s\mathcal{B}\psi\right|_{p=x/2t}+Ct^{-\frac{3}{4}}\|JU(t)\psi\|_{\esp^2}.
\end{equation}
For $\psi$ given by \eqref{k1} we have
$$\lim \limits_{x\to 0}\mathcal{F}_s\mathcal{B}\psi=\lim \limits_{x\to 0}\frac{1}{1-i\alpha x/2t}\int \limits_0^t e^{i(\frac{x^2}{2t})^2\tau}\left(\mathcal{F}_s-\alpha\frac{x}{2t}\mathcal{F}_c\right)|u|^2u(\tau)d\tau=0,$$
therefore
\begin{equation}\label{k2}
\lim\limits_{x\to 0}\int \limits_{0}^t U(t-\tau)|u|^2u(\tau)d\tau=Ct^{-\frac{3}{4}}\|JU(t)\psi\|_{\esp^2}.
\end{equation}
From \eqref{k0}-\eqref{k2} we have
\begin{equation}\label{k3}
w(t,0)=C \displaystyle t^{-\frac{3}{4}}\left(\|JU(t)u_0\|_{\esp^ 2}+\|JU(t)\psi\|_{\esp^2}\right),\  \ \psi:=\int\limits_0^ t \mathcal{B}^{-1}\mathcal{F}_s e^{\frac{i}{2} p^2\tau }\mathcal{F}_s\mathcal B |u|^2u(\tau)d\tau,
\end{equation}
By direct calculation, 
\begin{equation*}
\begin{array}{rcl}
JU(t)\psi&=&\displaystyle \int\limits_0^t \partial_p \frac{1}{1+i\alpha p}e^{i\frac{p^2}{2}\tau} \mathcal{F}_s\mathcal B |u|^2u(\tau)d\tau\\
&=&\displaystyle \int\limits_0^t e^{i\frac{p^2}{2}\tau}\frac{1}{1+i\alpha p}\left[-i\tau |u|^2u(\tau,0)+(\mathcal{F}_s-\alpha p \mathcal{F}_c)\right]J|u|^2u(\tau )d\tau\\
&&\displaystyle +\int\limits_0^t e^{i\frac{p^2}{2}\tau}\left[\partial_p \left( \frac{1}{1+i\alpha p}\right) \mathcal{F}_s |u|^2u(\tau )-\partial_p\left( \frac{\alpha p}{1+i\alpha p}\right)\mathcal{F}_c |u|^2u(\tau ) \right]d\tau,
\end{array}
\end{equation*}
therefore, using the Plhancherel theorem, we obtain
\begin{equation}\label{k4}
\begin{array}{rcl}
\|JU(t)\psi\|_{\esp^ 2}&\leq& \displaystyle C \int\limits_0^t \left( \|J|u|^2u(\tau)\|_{\esp^2}+\||u|^2u(\tau)\|_{\esp^2}+\tau |u(\tau,0)|^3\right)d\tau\\
&\leq & \displaystyle C \left( \langle t \rangle^{\frac{1}{4}-\gamma}+\langle t\rangle^{\gamma}\right) \|u\|_{\mathbf X_T}^3+C \int\limits_0^t\tau | u(\tau,0)|^3d\tau.
\end{array}
\end{equation}
Note that $|u(0,\tau)|\leq \|u\|_{\esp^\infty}$, therefore 
\begin{equation}
 \int\limits_1^t\tau | u(\tau,0)|^3d\tau\leq C\left(\int \limits_0^ t \tau |u(\tau,0)|^ 3+\sup _t t^{\frac{3}{2}}\|u\|^3_{\esp^ 2}\int \limits_0^ 1 \frac{1}{\sqrt{\tau}}d\tau \right)\leq \langle  t \rangle^{\frac{1}{2}}\|u\|_{\mathbf X_T}\label{k5},
\end{equation}
applying the above estimate to \eqref{k3}, we obtain
\begin{equation}\label{4.16}
w(t,0)=C \displaystyle t^{-\frac{3}{4}}\|u_0\|_{\mathbf{H}^{0,1}}+t^{-\frac{1}{4}}\|u\|_{\mathbf X_T}^ 3,
\end{equation}
here we used \eqref{KR}.
Combining \eqref{4.16} with \eqref{2.7}, \eqref{2.15} and \eqref{4.7} we have
\begin{equation}
\sup_{t\in \left[ 0,T\right) }\left\langle t\right\rangle ^{-2\gamma
}\left\| w\left( t\right) \right\| _{\mathbf{X}}^{2}+\sup_{t\in \left[
0,T\right) }\left\langle t\right\rangle ^{-1+2\gamma }\left\|
Jw\left( t\right) \right\| _{\mathbf{L}^{2}
}^{2}\leq C\varepsilon ^{2}.  \label{4.9}
\end{equation}
We next show a-priori estimate of $z\left( t\right) $. We start with (see (%
\ref{2.6}) -(\ref{2.140})) 
\begin{equation}
\left\| z\left( t\right) \right\| _{\mathbf{L}^{2}\left( \mathbf{R}%
^{+}\right) }^{2}\leq \int_{0}^{t}\left|\partial_x z\left( \tau ,0\right) \right|
\left| h\left( \tau \right) \right| d\tau .  
\label{4.0-1}
\end{equation}
\begin{equation}
\left\| \partial _{x}z\left( t\right) \right\| _{\mathbf{L}^{2}\left( 
\mathbf{R}^{+}\right) }^{2}\leq 2\int_{0}^{t}\left| \partial _{\tau }z\left(
\tau ,0\right) \right| \left| \partial_x z(\tau,0)\right| d\tau . \label{4.0-2}
\end{equation}
\begin{equation}
\left\| \partial _{t}z\left( t\right) \right\| _{\mathbf{L}^{2}\left( 
\mathbf{R}^{+}\right) }^{2}\leq C\int_{0}^{t}\left| \partial _{\tau }z\left(
\tau ,0\right) \right| \left| \partial _{\tau }h\left( \tau \right) \right|
d\tau . \label{4.0-3}
\end{equation}
and 
\begin{equation}
\left\Vert Jz\left( t\right) \right\Vert _{\mathbf{L}^{2}\left( \mathbf{R}%
^{+}\right) }^{2}\leq \int_{0}^{t}|\tau z(\tau,0)+2i\tau^{2}\partial _{\tau}z(\tau,0)||\partial _{x}z\left( t,0\right)|.  \label{4.5}
\end{equation}
We have by (\ref{3.14}) and integration by parts 
\begin{eqnarray*}
&&\partial _{x}z\left( t,0\right) \\
&=&C\int_{0}^{t}\frac{1}{\sqrt{\tau }}\partial _{t}h\left( t-\tau \right)
d\tau =\left\{ \int_{0}^{\frac{t}{2}}+\int_{\frac{t}{2}}^{t}\right\} \frac{1%
}{\sqrt{\tau }}\partial _{t}h\left( t-\tau \right) d\tau \\
&=&C\int_{0}^{\frac{t}{2}}\frac{1}{\sqrt{\tau }}\partial _{t}h\left( t-\tau
\right) d\tau -\int_{\frac{t}{2}}^{t}\frac{1}{\sqrt{\tau }}\partial _{\tau
}h\left( t-\tau \right) d\tau \\
&=&C\int_{0}^{\frac{t}{2}}\frac{1}{\sqrt{\tau }}\partial _{t}h\left( t-\tau
\right) d\tau -\int_{\frac{t}{2}}^{t}\partial _{\tau }\left( \frac{1}{\sqrt{%
\tau }}h\left( t-\tau \right) \right) d\tau \\
&&+C\int_{\frac{t}{2}}^{t}\frac{1}{2}\frac{1}{\tau \sqrt{\tau }}h\left(
t-\tau \right) d\tau .
\end{eqnarray*}
By $\left| \partial _{t}h\left( t\right) \right| \leq C\varepsilon
\left\langle t\right\rangle ^{-\frac{7}{4}-\gamma },\left| h\left( t\right)
\right| \leq C\varepsilon \left\langle t\right\rangle ^{-\frac{3}{4}-\gamma
} $ and $h\left( 0\right) =0$ we have 
\begin{equation}
\left| \partial _{x}z\left( t,0\right) \right| \leq C\varepsilon
\left\langle t\right\rangle ^{-\frac{5}{4}-\gamma }
\end{equation}
in a similar way we can obtain
\begin{equation}\label{4.5a}
\left| z\left( t,0\right) \right| \leq C\varepsilon
\left\langle t\right\rangle ^{-\frac{1}{4}-\gamma },\ \ \ \left| \partial _{t}z\left( t,0\right) \right| \leq C\varepsilon
\left\langle t\right\rangle ^{-\frac{5}{4}-\gamma }
\end{equation}
which implies by \eqref{4.0-1}-\eqref{4.0-3}
\begin{eqnarray}
\left\| z\left( t\right) \right\| _{\mathbf{L}^{2}}^2+\left\|\partial_x z\left( t\right) \right\| _{\mathbf{L}^{2}\left( \mathbf{R}^{+}\right) }^{2} +\left\| \partial_t z\left( t\right) \right\| _{\mathbf{L}^{2}}^2
&\leq &C\varepsilon ^{2}\int_{0}^{t}\left\langle \tau \right\rangle
^{-1-\gamma }d\tau \leq C\varepsilon ^{2}  \label{4.11}
\end{eqnarray}
To estimate $\|Jz\|_{\esp^2(\mathbf R^+)}$ we need to analize  
$itz\left( t,0\right) +2it^{2}\partial _{t}z(t,0) $, for this via \eqref{3.13}-\eqref{3.15},
we have by a direct computation 
\begin{eqnarray}
&&itz\left( t,0\right) +2it^{2}\partial _{t}z(t,0)  \notag \\
&=&C\frac{t}{\sqrt{2i\pi }}\int_{0}^{t}\frac{1}{\sqrt{\tau }}h\left( t-\tau
\right) d\tau -2C\frac{t^{2}}{\sqrt{2i\pi }}\int_{0}^{t}\frac{1}{\sqrt{\tau }}%
\partial _{\tau }h\left( t-\tau \right) d\tau  \notag \\
&=&C\frac{t}{\sqrt{2i\pi }}\int_{\frac{t}{2}}^{t}\frac{1}{\sqrt{\tau }}%
h\left( t-\tau \right) d\tau -2C\frac{t^{2}}{\sqrt{2i\pi }}\int_{\frac{t}{2}%
}^{t}\frac{1}{\sqrt{\tau }}\partial _{\tau }h\left( t-\tau \right) d\tau 
\notag \\
&&+C\frac{t}{\sqrt{2i\pi }}\int_{0}^{\frac{t}{2}}\frac{1}{\sqrt{\tau }}%
h\left( t-\tau \right) d\tau -2C\frac{t^{2}}{\sqrt{2i\pi }}\int_{0}^{\frac{t}{%
2}}\frac{1}{\sqrt{\tau }}\partial _{\tau }h\left( t-\tau \right) d\tau .
\label{4.6}
\end{eqnarray}
We apply the integration by parts to the first and second terms of the right
hand side to find that 
\begin{eqnarray*}
&&\frac{t}{\sqrt{2i\pi }}\int_{\frac{t}{2}}^{t}\frac{1}{\sqrt{\tau }}h\left(
t-\tau \right) d\tau -2\frac{t^{2}}{\sqrt{2i\pi }}\int_{\frac{t}{2}}^{t}%
\frac{1}{\sqrt{\tau }}\partial _{\tau }h\left( t-\tau \right) d\tau \\
&=&-2\frac{t^{2}}{\sqrt{2i\pi }}\int_{\frac{t}{2}}^{t}\partial _{\tau
}\left( \frac{1}{\sqrt{\tau }}h\left( t-\tau \right) \right) d\tau \\
&&+\frac{t}{\sqrt{2i\pi }}\int_{\frac{t}{2}}^{t}\frac{1}{\sqrt{\tau }}%
h\left( t-\tau \right) d\tau -\frac{t^{2}}{\sqrt{2i\pi }}\int_{\frac{t}{2}%
}^{t}\frac{1}{\tau \sqrt{\tau }}h\left( t-\tau \right) d\tau \\
&=&\frac{2t^{\frac{3}{2}}}{\sqrt{i\pi }}h\left( \frac{t}{2}\right) -\frac{%
t^{2}}{\sqrt{2i\pi }}\int_{\frac{t}{2}}^{t}\left( \frac{t-\tau }{t\tau }%
\right) \frac{1}{\sqrt{\tau }}h\left( t-\tau \right) d\tau .
\end{eqnarray*}
Hence 
\begin{eqnarray}
\left\vert \frac{t}{\sqrt{2i\pi }}\int_{\frac{t}{2}}^{t}\frac{1}{\sqrt{%
\tau }}h\left( t-\tau \right) d\tau\right. & -&\left.2\frac{t^{2}}{\sqrt{2i\pi }}\int_{\frac{%
t}{2}}^{t}\frac{1}{\sqrt{\tau }}\partial _{\tau }h\left( t-\tau \right)
d\tau \right\vert  \notag \\
&\leq &C\varepsilon\left\langle t\right\rangle
^{\frac{3}{4}-\gamma }+C\varepsilon \int_{\frac{t}{2}}^{t}\frac{1}{\sqrt{\tau }%
\left\langle t-\tau \right\rangle ^{\gamma }}d\tau  \notag \\
&\leq &C\varepsilon\left\langle t\right\rangle 
^{\frac{3}{4}-\gamma } \label{4.7 1}
\end{eqnarray}
The third and fourth terms of the right hand sides of (\ref{4.6}) are
estimated from above by 
\begin{eqnarray}
&&C\varepsilon ^{3}t\int_{0}^{\frac{t}{2}}\frac{1}{\sqrt{\tau }\left\langle
t-\tau \right\rangle ^{1+\gamma }}d\tau +C\varepsilon ^{3}t^{2}\int_{0}^{%
\frac{t}{2}}\frac{1}{\sqrt{\tau }\left\langle t-\tau \right\rangle
^{2+\gamma }}d\tau  \notag \\
&\leq &C\varepsilon \left\langle t\right\rangle
^{\frac{3}{4}-\gamma },  \label{4.7-1}
\end{eqnarray}
where we have used the assumption such that $\left\vert \partial _{t}h\left(
t\right) \right\vert \leq C\varepsilon \left\langle t\right\rangle
^{-\frac{7}{4}-\gamma }.$ We apply (\ref{4.7 1}) and (\ref{4.7-1}) to (\ref{4.6}) to
have 
\begin{equation*}
\left\vert tz\left( t,0\right) +2t^{2}\partial _{t}z(t,0)\right\vert \leq
C\varepsilon\left\langle t\right\rangle ^{\frac{3}{4}-\gamma }
\end{equation*}
for $t\geq 1$, which gives us 
\begin{eqnarray*}
\left\Vert Jz\left( t\right) \right\Vert _{\mathbf{L}^{2}\left( \mathbf{R}%
^{+}\right) }^{2} &\leq &C\varepsilon ^{2}\int_{0}^{t}\left\langle \tau
\right\rangle ^{-\frac{5}{4}-\gamma }\langle \tau
\rangle ^{\frac{3}{4}-\gamma }d\tau  \leq
C\varepsilon ^{2}\left\langle t\right\rangle ^{\frac{1}{2}-2\gamma }
\end{eqnarray*}
for $\gamma \geq 0.$
This gives us the desired estimate
\begin{equation}\label{4.10}
\left\langle t\right\rangle ^{-\frac{1}{2}+2\gamma }\left\Vert Jz\left( t\right) \right\Vert _{\mathbf{L}^{2}\left( \mathbf{R}%
^{+}\right) }^{2}\leq C\epsilon^2.
\end{equation}
From (\ref{4.11})-(\ref{4.10}), the estimate 
\begin{equation}
\sup_{t\in \left[ 0,T\right) }\left\| z\left( t\right) \right\| _{\mathbf{X}%
}^{2}+\sup_{t\in \left[ 0,T\right) }\left\langle t\right\rangle ^{-\frac{1}{2%
}+2\gamma }\left\| Jz\left( t\right) \right\| _{\mathbf{L}^{2}\left( \mathbf{%
R}^{+}\right) }^{2}\leq C\varepsilon ^{2}  \label{4.12}.
\end{equation}
follows. Since $u\left( t\right) =w\left( t\right) +z\left( t\right) ,$ by (
\ref{4.9}) and (\ref{4.12}) 
\begin{equation*}
\sup_{t\in \left[ 0,T\right) }\left\langle t\right\rangle ^{-2\varepsilon
}\left\| u\left( t\right) \right\| _{\mathbf{X}}^{2}+\sup_{t\in \left[
0,T\right) }\left\langle t\right\rangle ^{-\frac{1}{2}+2\varepsilon }\left\|
Ju\left( t\right) \right\| _{\mathbf{L}^{2}
}^{2}\leq C\varepsilon ^{2}.
\end{equation*}
This completes the proof of the Lemma.
\end{proof}

In order to prove the a-priori estimate of solutions in the uniform norm we use the factorization technique of the evolution operator used in  \cite{EsHaKa}. We note
\begin{equation}
U(t)\psi=\mathcal B^{-1}U_D(t) \mathcal B \psi=\mathcal B^{-1}MD_t\mathcal{F}_s M \mathcal B \psi
\end{equation}
where $M=e^{i\frac{|x|^2}{2t}}$, $D_t\phi=\frac{1}{\sqrt{it}}\phi(\frac{x}{\sqrt{t}})$.
By a direct calculation we have
\begin{equation*}
U\left( t\right) ^{-1}=\mathcal B^{-1}U_D^{-1}(t) \mathcal B =\mathcal B^{-1}U_D(-t) \mathcal B =U\left(
-t\right) .
\end{equation*}
We have
\begin{eqnarray*}
\psi \left( t\right) &=&U\left( t\right) U\left( -t\right) \psi
\left( t\right) =\mathcal B^{-1}MD_t\mathcal{F}_s M \mathcal B U\left( -t\right) \psi \left(
t\right) \\
&=&MD_t\mathcal{F}_s  \mathcal B U\left( -t\right) \psi \left( t\right) +\mathcal B^{-1}MD_t\mathcal{F}_s (M -1)\mathcal B  U\left( -t\right) \psi \left( t\right)\\
&&+(\mathcal B^{-1}-1)MD_t\mathcal{F}_s  \mathcal B U\left( -t\right) \psi \left( t\right) 
\end{eqnarray*}
which implies via Lemma \ref{Lemma 1.3} 
\begin{equation}
\begin{array}{l}
 \left\| \psi \left( t\right) \right\| _{\mathbf{L}^{\infty }\left( \mathbf{R}%
^{+}\right) }\leq Ct^{-\frac{1}{2}}\left\|\mathcal{F}_s \mathcal B U(-t) \psi \left( t\right) \right\| _{\mathbf{L}^{\infty }\left( \mathbf{%
R}^{+}\right) }\\
\hspace{2cm}+Ct^{-\frac{3}{4}}\left\| xU\left( -t\right) \psi \left(
t\right) \right\| _{\mathbf{L}^{2}}+\|\partial_xMD_t\mathcal{F}_s  \mathcal B U\left( -t\right) \psi \|_{\esp^2}
\end{array}
\label{4.0-b}
\end{equation}

Via integration by parts, if $\mathcal{B}\psi(0)=0$ we have
\begin{align*}
 xU(-t)\psi=&\int \limits_{-\infty}^\infty \partial_p(e^{ipx})e^{-i\frac{p^2}{2}t}\frac{1}{1+i\alpha p}\mathcal{F}_s\mathcal B\psi(p) dp\\
= &\int \limits_{-\infty}^\infty e^{ipx}e^{-i\frac{p^2}{2}t}\frac{1}{1+i\alpha p}(ipt-\partial_p)\mathcal{F}_s\mathcal B\psi(p)  dp\\
 &+\int \limits_{-\infty}^\infty e^{ipx}e^{-i\frac{p^2}{2}t}\frac{i\alpha }{(1+i\alpha p)^2}\mathcal{F}_s\mathcal B\psi(p)  dp\\
 =&\int \limits_{-\infty}^\infty e^{ipx}e^{-i\frac{p^2}{2}t}\frac{1}{1+i\alpha p}\F_c (\mathcal{J }+1) \psi dp\\
 &+\int \limits_{-\infty}^\infty e^{ipx}e^{-i\frac{p^2}{2}t}\frac{i\alpha }{(1+i\alpha p)^2}\F_s\mathcal{B}\psi(p) dp\\
\end{align*}
Hence, via Plancherel theorem, we have 
\begin{eqnarray}
\label{rf6}\\
\left\| xU\left( -t\right) \psi\right\| _{\mathbf{L}^{2}} &\leq &\left\| \frac{1}{1+i\alpha p}\F_c( \mathcal{J}+1) \psi\right\| _{%
\mathbf{L}^{2} }+\left\|\frac{i\alpha }{(1+i\alpha p)^2}\left\{ \mathcal{F}_s-i\alpha p \mathcal F_c\right\}\psi(p) \right\|_{\esp^ 2} \notag \\
&\leq&C\left(\left\| \mathcal J\psi\right\| _{\mathbf{L}^{2} }+\|\psi\|_{\esp^2}\right).\notag
\end{eqnarray}
In the same way we can prove
\begin{equation}\label{rf60}
 \left\| x\partial_x U\left( -t\right) \psi\right\| _{\mathbf{L}^{2}}\leq C \left(\left\| \mathcal J\psi\right\| _{\mathbf{L}^{2}}+\|\psi\|_{\esp^2}+\|\partial_x \psi\|_{\esp^ 2}\right).\
\end{equation}
On the other hand, we have
\begin{equation}
 \begin{array}{rcl}
  \partial_xMD_t\mathcal{F}_s  \mathcal B U\left( -t\right) \psi=t^{-1}MD_t\mathcal F_cMD_t\mathcal F_c(p\mathcal B\psi)
 \end{array}
\end{equation}
hence
\begin{equation}
 \| \partial_xMD_t\mathcal{F}_s  \mathcal B U\left( -t\right) \psi\|_{\esp^2}\leq Ct^{-2}\|\psi\|_{\mathbf H^{1,1}}
\end{equation}
Therefore by (\ref{4.0-b}) 
\begin{equation}
\left\| \psi\left( t\right) \right\| _{\mathbf{L}^{\infty }\left( \mathbf{R}%
^{+}\right) }\leq Ct^{-\frac{1}{2}}\left\| \mathcal{F}_{s}\mathcal B U\left( -t\right)
\psi\right\| _{\mathbf{L}^{\infty }}+Ct^{-\frac{3}{%
4}}\left(\left\| \mathcal J\psi\right\| _{\mathbf{L}^{2}}+\left\| \psi\right\| _{\mathbf{L}^{2}}\right)+Ct^{-2}\|\psi\|_{\mathbf H^{1,1}}.
\label{4.0-5}
\end{equation}

\begin{lemma}
\label{Lemma 4.2}Let $u$ be the solution of (\ref{1.1-1}) . Then we have 
\begin{equation*}
\left\| u\left( t\right) \right\| _{\mathbf{L}^{\infty }\left( \mathbf{R}%
^{+}\right) }\left\langle t\right\rangle ^{\frac{1}{2}}\leq C\varepsilon .
\end{equation*}
\end{lemma}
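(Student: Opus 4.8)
The plan is to exploit the splitting $u=w+z$ already set up in this section, where $w$ solves the homogeneous mixed problem \eqref{4.3} (so that $\mathcal{B}w(t,0)=0$) and $z$ solves \eqref{4.4}, and to bound each piece separately in $\esp^\infty(\mathbf R^+)$ with decay $\langle t\rangle^{-1/2}$. For $t\le 1$ the bound is immediate from $\|u\|_{\mathbf X_1}\le 3\varepsilon$ and the second inequality of Lemma~\ref{Lemma 1.1}, so I only need to treat $t\ge 1$.

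For the boundary part $z$ I would argue directly from the explicit representation \eqref{bfz} and the traces \eqref{3.13}--\eqref{3.15}, rather than through the factorization. Applying the first inequality of Lemma~\ref{Lemma 1.1} to $z$ and inserting the bound $\|z\|_{\esp^2}\le C\varepsilon$ from \eqref{4.11} together with the growth of $\|Jz\|_{\esp^2}$ controlled in \eqref{4.10}, and using the hypotheses $|h(t)|\le\varepsilon\langle t\rangle^{-3/4-\gamma}$ and $|\partial_t h(t)|\le\varepsilon\langle t\rangle^{-7/4-\gamma}$, gives the time decay of $\|z(t)\|_{\esp^\infty}$. The point is that the decay imposed on $h$ and $\partial_t h$ is exactly what keeps the boundary-generated traces and the weighted norm $\|Jz\|_{\esp^2}$ small enough for the interpolation to close; no resonance structure enters here.

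For the homogeneous part $w$ I would apply the factorization estimate \eqref{4.0-5} with $\psi=w$, which is legitimate precisely because $\mathcal{B}w(t,0)=0$. The lower-order terms on the right of \eqref{4.0-5}, namely $t^{-3/4}\bigl(\|Jw\|_{\esp^2}+\|w\|_{\esp^2}\bigr)$ and $t^{-2}\|w\|_{\mathbf H^{1,1}}$, all decay strictly faster than $\langle t\rangle^{-1/2}$: writing $w=u-z$ and inserting the a priori bounds of Lemma~\ref{Lemma 4.1}, one has $\|Ju\|_{\esp^2}\le C\varepsilon\langle t\rangle^{1/4-\varepsilon}$ and $\|u\|_{\mathbf X}\le C\varepsilon\langle t\rangle^{\varepsilon}$, so the prefactor $t^{-3/4}$ produces $\langle t\rangle^{-1/2-\varepsilon}+\langle t\rangle^{-3/4+\varepsilon}$. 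Thus the whole statement is reduced to controlling the leading term $t^{-1/2}\|\mathcal{F}_s\mathcal{B}U(-t)w\|_{\esp^\infty}$, i.e. to showing that the frequency profile $\varphi(t):=\mathcal{F}_s\mathcal{B}U(-t)w(t)$ stays bounded by $C\varepsilon$ in $\esp^\infty$.

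The main obstacle is this last step. Writing the Duhamel formula \eqref{t} for $w$, applying $\mathcal{F}_s\mathcal{B}U(-t)$, and using $\mathcal{F}_s\mathcal{B}U(-\tau)=e^{-\frac{i}{2}p^2\tau}\mathcal{F}_s\mathcal{B}$ (from \eqref{UM} and $\mathcal{F}_s^2=\mathrm{id}$), I obtain
\begin{equation*}
\varphi(t)=\mathcal{F}_s\mathcal{B}u_0+\lambda\int_0^t e^{-\frac{i}{2}p^2\tau}\,\mathcal{F}_s\mathcal{B}\bigl(|u|^2u\bigr)(\tau)\,d\tau ,
\end{equation*}
whose first term is bounded by $C\|\mathcal{B}u_0\|_{\esp^1}\le C\varepsilon$. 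A naive absolute estimate of the integrand is only of size $\tau^{-1}$ up to a slowly growing factor and so fails to be integrable; the cubic (critical) resonance must therefore be extracted. I would substitute the leading large-time asymptotics $u\sim MD_t\mathcal{F}_s\varphi$ afforded by \eqref{KR2} and the self-similar form of $U$, isolate the stationary, non-oscillatory part of the cubic term, which is of the shape $\tfrac{i\lambda}{\tau}|\varphi|^2\varphi$, and bound the oscillatory remainder by quantities that are integrable in $\tau$ using Lemma~\ref{Lemma 1.2} and the bounds of Lemma~\ref{Lemma 4.1}. Since $\lambda\in\mathbf R$ the resonant term is purely imaginary times $\varphi$, so $\partial_\tau|\varphi|^2=0$ pointwise in $p$ and $|\varphi|$ is conserved; the logarithmic divergence coming from the $\tau^{-1}$ factor is thereby confined to the phase and does not affect $\|\varphi(t)\|_{\esp^\infty}$. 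This yields $\|\varphi(t)\|_{\esp^\infty}\le C\varepsilon$ uniformly in $t$, which combined with the previous paragraph completes the proof. The delicate point throughout is the justification that the difference between $u$ and its leading profile contributes only time-integrable errors, and it is here that the smallness $\|u\|_{\mathbf X_T}^2=\varepsilon^{4/3}$ and the gain $t^{-3/4}$ in \eqref{4.0-5} are used.
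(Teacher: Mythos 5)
Your reduction to $t\ge 1$ and your use of the decomposition $u=w+z$ are fine, but both halves of your argument have genuine gaps. For $z$: the interpolation route fails quantitatively. From \eqref{4.10} one only has $\left\Vert Jz(t)\right\Vert _{\esp^{2}}\leq C\varepsilon \left\langle t\right\rangle ^{\frac{1}{4}-\gamma }$, so the first inequality of Lemma \ref{Lemma 1.1} gives
\begin{equation*}
\left\Vert z(t)\right\Vert _{\esp^{\infty }}\leq Ct^{-\frac{1}{2}}\left\Vert
Jz\right\Vert _{\esp^{2}}^{\frac{1}{2}}\left\Vert z\right\Vert _{\esp
^{2}}^{\frac{1}{2}}\leq C\varepsilon \,t^{-\frac{1}{2}}\left\langle
t\right\rangle ^{\frac{1}{8}-\frac{\gamma }{2}},
\end{equation*}
i.e.\ decay of order $t^{-\frac{3}{8}-\frac{\gamma }{2}}$ only; since $\gamma$ is small (of order $\varepsilon ^{1/3}$), this is strictly weaker than $t^{-1/2}$ and the lemma does not follow. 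The square root on $\left\Vert Jz\right\Vert$ is the culprit: this is exactly why the paper does \emph{not} use Lemma \ref{Lemma 1.1} for $z$, but instead the factorization estimate \eqref{4.0-5}, where $\left\Vert Jz\right\Vert _{\esp^{2}}$ enters with the better prefactor $t^{-3/4}$, yielding $t^{-3/4}\cdot \varepsilon \left\langle t\right\rangle ^{\frac{1}{4}-\gamma }\leq C\varepsilon t^{-\frac{1}{2}-\gamma }$. But \eqref{4.0-5} also requires the uniform profile bound $\left\Vert \mathcal{F}_{s}\mathcal{B}U(-t)z\right\Vert _{\esp^{\infty }}\leq C\varepsilon$ (estimate \eqref{4}, obtained by reducing to the Dirichlet evolution via \eqref{zd} and the oscillatory-integral analysis of \cite{EsHaKa}) and the bound $\left\Vert z\right\Vert _{\mathbf{H}^{1,1}}\leq C\varepsilon \left\langle t\right\rangle ^{\frac{1}{4}-\gamma }$ of \eqref{4.20.0}; none of these ingredients appears in your plan.

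For $w$: the resonance extraction as you set it up does not close, because the equation for $w$ is $Lw=\lambda |u|^{2}u$, not $\lambda |w|^{2}w$. Passing to the profile $\varphi =\mathcal{F}_{s}\mathcal{B}U(-t)w$, the stationary part of the cubic term is $\lambda t^{-1}\left\vert \varphi _{u}\right\vert ^{2}\varphi _{u}$ with $\varphi _{u}=\varphi +\psi $, $\psi =\mathcal{F}_{s}\mathcal{B}U(-t)z$. The boundary profile $\psi$ is bounded but does \emph{not} decay: by \eqref{4.18} it converges to $A(\xi )=\int_{0}^{\infty }i\xi e^{-\frac{i\xi ^{2}}{2}\tau }h(\tau )d\tau$. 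Hence the cross terms $t^{-1}\left( |\varphi |^{2}\psi +\varphi ^{2}\overline{\psi }+\dots \right)$ are of size $\varepsilon ^{3}t^{-1}$, non-integrable, and not of the form $(\text{real})\cdot \varphi$, so they can neither be absorbed into a phase nor integrated up; your conclusion that $|\varphi |$ is conserved modulo integrable errors therefore fails. The paper circumvents this by splitting $u=y+e^{-x}h$ and writing the resonant ODE \eqref{H}, \eqref{4.21} for the profile of $u$ itself, $\Phi =\mathcal{F}_{s}\mathcal{B}U(-t)\left( y+e^{-x}h\right)$, whose resonant coefficient $\left\vert \mathcal{F}_{s}\mathcal{B}U(-t)y\right\vert ^{2}$ is real; the price is the boundary forcing $G=i\partial _{t}\mathcal{F}_{s}\mathcal{B}U(-t)z=-\xi e^{-\frac{i\xi ^{2}}{2}t}h(t)$, which is not absolutely integrable in time (as $|h(t)|\sim \varepsilon \left\langle t\right\rangle ^{-\frac{3}{4}-\gamma }$) and must be handled by a separate oscillatory-integral estimate. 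Only after obtaining $\left\Vert \Phi \right\Vert _{\esp^{\infty }}\leq C\varepsilon$ does one recover \eqref{4.22} for $w$ by subtracting $\psi$. In short, you identified the resonance phenomenon correctly but attached it to the wrong profile, and you omitted both the non-decaying $z$-interaction and the boundary forcing, which are precisely the new difficulties of the inhomogeneous problem.
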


\begin{proof}
We have by a direct calculation 
\begin{eqnarray*}
z=\frac{2}{\sqrt{2\pi i}}\mathcal{B}^{-1}\int \limits_{0}^{t}h(t-\tau)\frac{x}{t\sqrt{\tau}}e^{i\frac{x^2}{2\tau}}d\tau=\mathcal{B}^{-1}z_{D}
\end{eqnarray*}
where
$z_D$ is the  solution of the initial boundary problem
\begin{equation}
\left\{ 
\begin{array}{c}
Lz_D=0,x\mathbf{\in R}^{+},\text{ }t>0, \\ 
z_D(0,x)=0,\text{ \ }x\text{ \ }\mathbf{\in R}^{+}, \\ 
z_D\left( t,0\right)=h\left( t\right) .%
\end{array}
\right.  
\end{equation}
Therefore
\begin{equation}
 \F_s\mathcal B U(-t)z=\F_sU_D(-t)z_D,\label{zd}
\end{equation}
In \cite{EsHaKa}, Lemma 4,2, we proved 
$$|\F_sU_D(-t)z_D|\leq C\varepsilon,$$ if $|h(t)|\leq C\varepsilon^2\langle t \rangle^{-\frac{1}{2}-\gamma}$, as consequence
\begin{equation}
 \|\F_s\mathcal B U(-t)z\|_{\esp^\infty}\leq C\varepsilon. \label{4}
\end{equation}
Now we estimate $\|z\|_{\mathbf H^{1,1}}$, for this via \eqref{rf6}, \eqref{rf60}  and \eqref{KR} we observe
\begin{equation}
\begin{array}{rcl}\label{4.40}
\|z\|_{\mathbf H^{1,1}}&=&\|U(-t)U(t)z\|_{\mathbf H^{1,1}}\leq C\left( \|U(t)z\|_{\mathbf H^1}+\|JU(t)z\|_{\esp^ 2} \right)\\
&\leq& C  \left( \|z\|_{\mathbf H^1}+\|xz\|_{\esp^ 2} \right),
\end{array}
\end{equation}
as consequence, we need estimate $\|xz\|_{\esp^ 2}.$
From \eqref{bfz} we have
\begin{equation}
z(t,x)=\int\limits_0^t I(x,\tau)h(t-\tau)d\tau, \ \ I(\tau,x)=\int\limits_{\mathbb R}e^{ipx} e^{i\frac{p^2}{2}\tau}\frac{p}{1+i\alpha p} dp.
\end{equation}
Let us define $I_b$ as
\begin{equation}
I_b(\tau,x)=\int\limits_{\mathbb R}e^{ipx} e^{-b|p|}e^{i\frac{p^2}{2}\tau}\frac{p}{1+i\alpha p} dp,  \ \ b>0,
\end{equation}
we note $$I(\tau,x)=\lim\limits_{b\to 0}I_b(\tau,x).$$ On the other hand, via integration by parts we observe
\begin{equation}
\begin{array}{rcl}
xI(\tau,x)&=&\lim\limits_{b\to 0}xI_b(\tau,x)\\
&=&\lim\limits_{b\to 0} \displaystyle -i\int\limits_{\mathbb R}\partial_p(e^{ipx}) e^{-b|p|}e^{i\frac{p^2}{2}\tau}\frac{p}{1+i\alpha p}dp\\
&=&\displaystyle  i\int\limits_{\mathbb R}e^{ipx}e^{\frac{i\tau p^2}{2}} \left[ 
\dfrac{1}{(1+i\alpha p)^2}+\dfrac{i\tau p^2}{1+i\alpha p} \right]
dp\end{array}
\end{equation}
this implies
$$
\begin{array}{rcl}
xz(t,x)=\displaystyle i\int\limits_0^t h(t-\tau)\int\limits_{\mathbb R}e^{ipx}e^{\frac{i\tau p^2}{2}} 
\dfrac{1}{(1+i\alpha p)^2}
dp\ d\tau\\\hspace{2cm}+i\displaystyle \int\limits_0^t \tau h(t-\tau)\int\limits_{\mathbb R}e^{ipx}\partial_\tau(e^{\frac{i\tau p^2}{2}}) \dfrac{1}{1+i\alpha p} dp \ d\tau
\end{array} $$
integrating by parts we get
\begin{equation}
\begin{array}{rcl}\label{r5}
xz(x,t)&=&\displaystyle i\int\limits_0^t h(t-\tau)\int\limits_{\mathbb R}e^{ipx}e^{\frac{i\tau p^2}{2}} 
\dfrac{1}{(1+i\alpha p)^2}
dp\ d\tau\\
&&\displaystyle +i\int\limits_0^t \left(h(t-\tau)-\tau\partial_\tau h(t-\tau)\right)\int\limits_{\mathbb R}e^{ipx}e^{\frac{i\tau p^2}{2}} \dfrac{1}{1+i\alpha p} dp \ d\tau.
\end{array}
\end{equation}
since $\left| h\left( t\right) \right|+\langle t\rangle\left| \partial_\tau h\left( t\right) \right|+ \langle t\rangle^ 2\left| \partial_\tau h\left( t\right) \right|\leq C\varepsilon \left\langle t\right\rangle ^{-\frac{3}{4}-\gamma },$ we obtain $\|xz\|_{\esp^ 2}\leq C\epsilon \langle t\rangle^{\frac{1}{4}-\gamma}$. From \eqref{4.40}
\begin{equation}
\|z\|_{\mathbf{H}^{1,1}} \leq C\epsilon \langle t\rangle^{\frac{1}{4}-\gamma}\label{4.20.0}
\end{equation}
By (\ref{4.0-5}), (\ref{4}) and (\ref{2}) 
\begin{equation}
\left\| z\left( t\right) \right\| _{\mathbf{L}^{\infty }\left( \mathbf{R}%
^{+}\right) }\leq C\varepsilon t^{-\frac{1}{2}}+Ct^{-\frac{3}{4}}\left\|
Jz\left( t\right) \right\| _{\mathbf{L}^{2}
}+Ct^{-\frac{3}{2}} \|z\|_{\mathbf H^{1,1}}\leq C\varepsilon t^{-\frac{1}{2}} \label{4.20.a}
\end{equation}
if $\left| h\left( t\right) \right|+\langle t\rangle\left| \partial_\tau h\left( t\right) \right|+ \langle t\rangle^ 2\left| \partial_\tau h\left( t\right) \right|\leq C\varepsilon \left\langle t\right\rangle ^{-\frac{3}{4}-\gamma }$.


We next estimate $\left\Vert w\left( t\right) \right\Vert _{\mathbf{L}%
^{\infty } }\left\langle t\right\rangle ^{\frac{1%
}{2}}.$ For this we write 
\begin{equation*}
u=y+e^{-x}h\left( t\right) ,
\end{equation*}%
where $\mathcal{B} y\left( t,0\right) =0.$ \ \ We represent the nonlinear term as 
\begin{eqnarray}\label{nonlinear term}
\left\vert u\right\vert ^{2}u &=&\left\vert y+e^{-x}h\left( t\right)
\right\vert ^{2}\left( y+e^{-x}h\left( t\right) \right) \\
&=&\left\vert y\right\vert ^{2}y+R,
\end{eqnarray}%
where 
\begin{eqnarray*}
R &=&e^{-x}y^2\overline{h}(t)+2e^{-x}|y|^2h+2ye^{-2x}|h|^2+e^{-2x}h^2\overline{y}+e^{-3x}|h|^2h
\end{eqnarray*}%
Applying $\mathcal{F}_s \mathcal B U(-t) $ to both sides \eqref{4.3}
 we obtain by using the factorization formula of the evolution operator 
 \begin{equation}\label{H}
   \begin{array}{lcr}
  i\partial _{t}\mathcal{F}_s \mathcal B U(-t)  \left( y+e^{-x}h\left(
t\right) \right) \\
=\lambda \mathcal{F}_s  U_D(-t) \mathcal B\left| \mathcal B^{-1} U_D(t) \mathcal B U\left( -t\right) y \right|^ 2 \mathcal B^{-1} U_D(t) \mathcal B U\left( -t\right) y+ \lambda \mathcal{F}_s  U_D(-t) \mathcal B R+G\\
=\lambda \mathcal{F}_s  U_D(-t)\left|\tilde{y}\right|^ 2\tilde{y}+\lambda \mathcal{F}_s  U_D(-t) R_1+ \lambda \mathcal{F}_s \mathcal B U(-t)  R+G
\end{array}
 \end{equation}
where
$$\tilde{y}=U_D(t) \mathcal B U\left( -t\right) y $$
$$\begin{array}{lcr}
   R_1=\mathcal{B}(|\mathcal{B}^{-1}\tilde{y}|^2\mathcal{B}^{-1}\tilde{y})-3|\tilde y|^2\tilde y
  \end{array}
$$
$$G=i\partial _{t}\mathcal{F}_s \mathcal B U(-t)  z$$
 Since $U_D(t)=MD_{t}\F_sM$ and $U(-t)=B^{-1}U_{D}(-t)B$ we have
  \begin{equation}
   \begin{array}{lcr}
  i\partial _{t}\mathcal{F}_s \mathcal B  U(-t) \left( y+e^{-x}h\left(
t\right) \right)\\
=\lambda\mathcal{F}_s  U_D(-t) \left|MD_{t}\F_sM \mathcal B U\left( -t\right) y\right|^ 2  MD_{t}\F_sM \mathcal B U\left( -t\right) y\\
\hspace{7cm}+R_1+ \lambda \mathcal{F}_s \mathcal B U_D(-t)  R+G\\
=t^{-1}\lambda\mathcal{F}_s  M^{-1}\F_s ^{-1}\left|\F_sM \mathcal B U\left( -t\right) y \right|^ 2  \F_sM \mathcal B U\left( -t\right) y+R_1+ \lambda \mathcal{F}_s \mathcal B  U_D(-t)  R+G\\
=t^{-1}\lambda\mathcal{F}_s  M^{-1}\F_s ^{-1}\left|\F_sM \mathcal B U\left( -t\right) y \right|^ 2  \F_sM \mathcal B U\left( -t\right) y\\
\hspace{7cm}+R_1+R_2+R_3+ \lambda \mathcal{F}_s \mathcal B  U_D(-t) R+G.
\end{array}
 \end{equation}
 where
$$
 R_2=3\lambda t^{-1}\lambda\mathcal{F}_s ( M^{-1}-1)\F_s^{-1} \left|\F_sM \mathcal B U\left( -t\right) y \right|^ 2  \F_sM \mathcal B U\left( -t\right) y
$$
$$R_3=3\lambda t^{-1}\left(\left|\F_sM \mathcal B U\left( -t\right) y \right|^ 2  \F_sM \mathcal B U\left( -t\right) y-\left|\F_s \mathcal B U\left( -t\right) y \right|^ 2  \F_s\mathcal B U\left( -t\right) y\right)$$
We note,
\begin{equation}
\|R_2(t)\|_{\esp^\infty}\leq Ct^{-\frac{5}{4}}\|x\mathcal{F}_s^{-1}|\Psi|^2\Psi\|_{\esp^2}\label{3r}
\end{equation}
with $\Psi=\mathcal{F}_s M \mathcal B U\left( -t\right) y $, by directly calculation we have
\begin{eqnarray}
\left\| x\mathcal{F}_{s}^{-1}\left|\Psi \right| ^{2}\Psi
\right\| _{\mathbf{L}^{2}} &\leq &C\left\| \partial
_{x}\left| \Psi \right| ^{2}\Psi \right\| _{\mathbf{L}^{2}}
\label{4r} \\
&\leq &\left\| \Psi \right\| _{\mathbf{L}^{\infty }}^{2}\left\| \partial _{x}\Psi \right\| _{\mathbf{L}^{2}}.  \notag
\end{eqnarray}
Also we get 
\begin{align}
\left\| \partial _{x}\Psi \right\| _{\mathbf{L}^{2}}&=\left\|
\partial _{x}\mathcal{F}_{c}MU\left( -t\right) y\right\| _{\mathbf{L}^{2}}\\
&\leq C\left\| xU\left( -t\right) y\right\| _{\mathbf{L}^{2}}\leq C\left(\left\| Jy\right\| _{\mathbf{L}^{2} }+\|y\|_{\esp^2}\right).  \label{5r}
\end{align}
Therefore via (\ref{4r}) and (\ref{5r}) 
\begin{equation}
\left\| x\mathcal{F}_{s}^{-1}\left| \Psi \right| ^{2}\Psi
\right\| _{\mathbf{L}^{2}}\leq \left\| \Psi \right\| _{%
\mathbf{L}^{\infty }}^{2}\left(\left\| Jy\right\| _{\mathbf{L}^{2} }+\|y\|_{\esp^2}\right).  \label{4.001}
\end{equation}
since we have by Lemma \ref{Lemma 4.1}
\begin{equation*}
\left(\left\| Jy\right\| _{\mathbf{L}^{2} }+\|y\|_{\esp^2}\right)\leq C\varepsilon t^{\frac{1}{4}-\varepsilon }
\end{equation*}
if $\left| h\left( t\right) \right| \leq C\varepsilon ^{3}t^{-1-\varepsilon
}.$ 
Thus by \eqref{3r} we obtain
\begin{equation} \label{C}
 \|R_2(t)\|_{\esp^\infty}\leq Ct^{-\frac{5}{4}}\varepsilon^3.
\end{equation}

Similarly we get 
\begin{eqnarray}
\left\| R_{3}\left( t\right) \right\| _{\mathbf{L}^{\infty }(R^{+})} &\leq
&Ct^{-1}\left( \left\| \mathcal{F}_{s}MBU\left( -t\right) u\right\| _{\mathbf{%
L}^{\infty }(\mathbf{R}^{+})}^{2}+\left\| \mathcal{F}_s\mathcal B U\left( -t\right)
u\right\| _{\mathbf{L}^{\infty }(\mathbf{R}^{+})}^{2}\right)  \notag \\
&&\times \left\| \mathcal{F}_{s}\left( M-1\right) BU\left( -t\right)
u\right\| _{\mathbf{L}^{\infty }(\mathbf{R}^{+})}  \notag \\
&\leq &Ct^{-1-\frac{1}{4}}\left\| \mathcal{F}_s\mathcal B U\left( -t\right) u\right\|
_{\mathbf{L}^{\infty }(\mathbf{R}^{+})}^{2}\left(\left\| Ju\right\| _{\mathbf{L}%
^{2}}+\|u\|_{\esp^2}\right)  \notag \\
&\leq &C\varepsilon ^{3}t^{-1-\varepsilon }.  \label{D}
\end{eqnarray}
if $\left| h\left( t\right) \right| \leq C\varepsilon ^{3}t^{-1-\varepsilon
}.$

Furthermore we have 
\begin{eqnarray*}
&&\left\Vert \mathcal{F}_{s} \mathcal BU\left( -t\right) R\right\Vert _{\mathbf{L}%
^{\infty }(\mathbf{R}^{+})}\\
& \notag \leq &C\left\Vert \mathcal B R\right\Vert _{\mathbf{L}^{1}(\mathbf{R}^{+})}  \notag \\
&\leq &C\left(\left\Vert y\right\Vert _{\mathbf{L}^{\infty} (\mathbf{R}^{+})}^2+\left\Vert y\right\Vert _{\mathbf{L}^{\infty} (\mathbf{R}^{+})}\|\partial_x y\|_{\mathbf{L}^{2 }}\right)\left\vert h\left( t\right) \right\vert \notag\\ 
&& \hspace{3cm}+C\left(\left\Vert y\right\Vert _{\mathbf{L}^{2 }}+\|\partial_x y\|_{\mathbf{L}^{2 }}\right)\left\vert h\left( t\right) \right\vert
^{2}+C\left\vert h\left( t\right) \right\vert ^{3} , \notag 
\end{eqnarray*}
By Sobolev embebeding theorem we have
\begin{eqnarray}
 \label{G2}
&&\left\Vert \mathcal{F}_{s} \mathcal BU\left( -t\right) R\right\Vert _{\mathbf{L}%
^{\infty }(\mathbf{R}^{+})}
\\
&\leq &C\left(\left\Vert u\right\Vert _{\mathbf{X}}^{2}+\|u\|_{\esp^\infty}\| u\|_{\mathbf X}\right)\left\vert h\left( t\right) \right\vert+C\|u\|_{\mathbf{X}}|h(t)|^2
+C\left\vert h\left( t\right) \right\vert ^{3}  \notag \\
&\leq &C\varepsilon ^{3}t^{-1-\varepsilon }  \notag 
\end{eqnarray}
because $\left\vert h\left( t\right) \right\vert \leq C\varepsilon ^{3}t^{-1-\varepsilon }.$
On the other hand, since $\tilde{y}=U_D(t) \mathcal B U\left( -t\right) y $ 
we have
\begin{align}
 \left\Vert \mathcal{F}_{s} U_D\left( -t\right) R_1\right\Vert _{\mathbf{L}%
^{\infty }(\mathbf{R}^{+})}\leq& \|R_1\|_{\esp^1 }\notag
\leq C\|\partial_x\tilde y\|_{\esp^2}^2\|\tilde y\|_{\esp^\infty}\\
\leq & C
t^{-1}\|\partial_xU_D\left( -t\right)\mathcal B y\|_{\esp^2}^2\|\mathcal{B}y\|_{\esp^\infty}\\
\leq & C t^{-2}\|\partial_x \mathcal{B}y\|^{\frac{5}{2}}_{\esp^2}\|\mathcal{B}y\|^{\frac{1}{2}}_{\esp^2}\\
\leq& C  t^{-1-\varepsilon} \varepsilon^3
\end{align}

 Thus applying (\ref{C})-(\ref{G2}) into (\ref{H}) we get 
\begin{equation*}
i\partial _{t}\mathcal{F}_{s}\mathcal BU\left( -t\right) \left( y+e^{-x}h\left(
t\right) \right) =\lambda t^{-1}\left\vert \mathcal{F}_s\mathcal B U\left(
-t\right) y\right\vert ^{2}\mathcal{F}_s\mathcal B U\left( -t\right) y+O\left(
\varepsilon ^{3}t^{-1-\varepsilon }\right) .\label{D1}
\end{equation*}%

We rewrite this identity as 
\begin{eqnarray}
&&i\partial _{t}\mathcal{F}_{s}\mathcal BU\left( -t\right) \left( y+e^{-x}h\left(
t\right) \right)  \notag \\
&=&\lambda t^{-1}\left\vert \mathcal{F}_s\mathcal B U\left( -t\right)
y\right\vert ^{2} \mathcal{F}_s\mathcal B U\left( -t\right)\left( y+e^{-x}h\left(
t\right) \right)  \notag \\
&&-\lambda t^{-1}\left\vert \mathcal{F}_s\mathcal B U\left( -t\right)
y\right\vert ^{2}e^{-x}h\left( t\right) +O\left( \varepsilon
^{3}t^{-1-\varepsilon }\right)  \notag \\
&=&\lambda t^{-1}\left\vert \mathcal{F}_s\mathcal B U\left( -t\right)
y\right\vert ^{2}\mathcal{F}_s\mathcal B U\left( -t\right) \left( y+e^{-x}h\left(
t\right) \right) +O\left( \varepsilon ^{3}t^{-1-\varepsilon }\right)
\label{4.21}
\end{eqnarray}%
here we used
\begin{eqnarray*}
&&t^{-1}\left\vert \left\vert \mathcal{F}_{s}\mathcal B U\left( -t\right)
y\right\vert ^{2} \mathcal{F}_{s}\mathcal B U\left( -t\right)e^{-x}h\left( t\right) \right\vert \\
&\leq &Ct^{-1}\left( \left\Vert \mathcal{F}_{s}U_{D}\left( -t\right)
z\right\Vert _{\mathbf{L}^{\infty }(\mathbf{R}^{+})}^{2}+\left\Vert \mathcal{%
F}_{s}U_{D}\left( -t\right) w\right\Vert _{\mathbf{L}^{\infty }(\mathbf{R}%
^{+})}^{2}\right) \left\vert h\left( t\right) \right\vert \\
&\leq &Ct^{-1}\varepsilon ^{2}\left( 1+t^{\frac{1}{4}-\varepsilon }\right)
\left\vert h\left( t\right) \right\vert .
\end{eqnarray*}%
Multiplying both sides of (\ref{4.21}) by 
\begin{equation*}
e^{-i\lambda \int_{1}^{t}\left\vert \mathcal{F}_{s}\mathcal B U\left( -\tau \right)
y\right\vert ^{2}d\tau }
\end{equation*}%
we obtain 
\begin{eqnarray*}
&&i\partial _{t}\left( e^{-i\lambda \int_{1}^{t}\left\vert \mathcal{F}%
_{s}\mathcal BU\left( -\tau \right) y\right\vert ^{2}d\tau }\mathcal{F}%
_{s}\mathcal BU\left( -t\right) \left( y+e^{-x}h\left( t\right) \right) \right) \\
&=&e^{-i\lambda \int_{1}^{t}\left\vert \mathcal{F}_{s}\mathcal B U\left( -\tau
\right) y\right\vert ^{2}d\tau }\left( G+O\left( \varepsilon ^{3}t^{-1-\varepsilon
}\right) \right).
\end{eqnarray*}%
Integrating in time, we obtain 
\begin{eqnarray*}
&&e^{-i\lambda \int_{1}^{t}\tau ^{-1}\left| \mathcal{F}_{s}\mathcal BU\left( -\tau
\right) u\right| ^{2}d\tau }\mathcal{F}\mathcal BU\left( -t\right) u \\
&=&\mathcal{F}\mathcal BU\left( -1\right) u\left( 1\right)
+\int_{1}^{t}e^{-i\lambda \int_{1}^{t}\tau ^{-1}\left| \mathcal{F}%
_{s}\mathcal BU\left( -\tau \right) u\right| ^{2}d\tau }\left( G+O\left( \varepsilon ^{3}t^{-1-\varepsilon
}\right) \right) dt
\end{eqnarray*}
By \eqref{zd} we have
$$G=i\partial_t \mathcal{F}_{s}U_D\left( -t\right) z_D(t,x)=-\xi e^{-i\frac{\xi}{2}t}h (t)$$
hence 
\begin{align}
 &\int_{1}^{t}e^{-i\lambda \int_{1}^{t}\tau ^{-1}\left| \mathcal{F}%
_{s}\mathcal BU\left( -\tau \right) u\right| ^{2}d\tau }\left( G+O\left( \varepsilon ^{3}t^{-1-\varepsilon
}\right) \right) dt\\
&\hspace{1cm}=\int_{1}^{t}e^{-i\lambda \int_{1}^{t}\tau ^{-1}\left| \mathcal{F}%
_{s}\mathcal BU\left( -\tau \right) u\right| ^{2}d\tau }\left( -\xi e^{i\frac{\xi}{2}t}h (t)+O\left( \varepsilon ^{3}t^{-1-\varepsilon
}\right) \right) dt \notag\\
&\hspace{1cm}=C\varepsilon^3-\int_{1}^{t}e^{-i\lambda \int_{1}^{t}\tau ^{-1}\left| \mathcal{F}%
_{s}\mathcal BU\left( -\tau \right) u\right| ^{2}d\tau }\xi e^{-i\frac{\xi}{2}t}h (t)dt \notag
\end{align}
Via a similar process, which was shown by us in \cite{EsHaKa}, we can prove
\begin{align}
 &\left|\int_{1}^{t}e^{-i\lambda \int_{1}^{t}\tau ^{-1}\left| \mathcal{F}%
_{s}\mathcal BU\left( -\tau \right) u\right| ^{2}d\tau }\xi e^{-i\frac{\xi}{2}t}h (t)dt\right|
\leq C\varepsilon\notag
\end{align}
Collecting everything, we arrive at
\begin{equation*}
\left\| \mathcal{F}_{s}\mathcal BU\left( -t\right) (y+e^{-x}h(t))\right\| _{\mathbf{L}^{\infty }(%
\mathbf{R}^{+})}\leq C\varepsilon .
\end{equation*}
Therefore 
\begin{equation}
\left\| \mathcal{F}_{s}\mathcal B U\left( -t\right) w\right\| _{\mathbf{L}^{\infty }(%
\mathbf{R}^{+})}\leq C\varepsilon .  \label{4.22}
\end{equation}
hence
\begin{equation}
\left\| w\right\| _{\mathbf{L}^{\infty }}\leq C\varepsilon
t^{-\frac{1}{2}}\left\| \mathcal{F}_{c}U\left( -t\right) w\right\| _{\mathbf{%
L}^{\infty }}+C\varepsilon t^{-\frac{3}{4}}\left(\left\|
Jw\right\| _{\mathbf{L}^{2}}+\|w\|_{\esp^2}\right)+Ct^{-\frac{3}{2}}\|w\|_{\mathbf{H}^{1,1}}\label{4.301}
\end{equation}
To estimate $\|w\|_{\mathbf{H}^{1,1}}$ we observe $\|w\|_{\mathbf{H}^{1,1}}\leq C \left(\|xw\|_{\esp^2}+\|xw_x\|_{\esp ^2}\right)$. Via Lemma \ref{Lemma hom.} we have
$$w(t)=U(t)u_0+\int _{0}^{t}U(t-\tau)|u|^2u(\tau)d\tau,$$ from \eqref{rf6} and Lemma \ref{Lemma 1.2} we note
\begin{equation}\label{4.302}
\begin{array}{cl}
 \|xw\|_{\esp^2}&\leq \displaystyle C\left(\|\mathcal{J}u_0\|_{\esp^2}+ \|u_0\|_{\mathbf X}  \right)+C\int \limits_{0}^t
 \left(\|\mathcal{J}|u|^2 u\|_{\esp^2}+ \||u|^2 u\|_{\mathbf X}  \right)d\tau\\
 &\leq \displaystyle C \varepsilon+C\int \limits_0^t \|u(\tau)\|_{\esp^\infty}^2\left(\|\mathcal{J}u\|_{\esp^2}+\|u\|_{\esp^2} \right)d\tau\\
 &\leq \displaystyle C \varepsilon+C\int \limits_0^t \varepsilon^3 \langle  \tau \rangle^{-1}\left(
  \langle  \tau \rangle^{\frac{1}{4}-\gamma}+ \langle  \tau \rangle^{\gamma}\right)d\tau\\
  &\leq C\varepsilon(1+\langle  t \rangle^{\frac{1}{4}-\gamma}+ \langle  t \rangle^{\gamma}).
\end{array} 
\end{equation}
Furthermore, from \eqref{rf60}  by a similar process like in the previous estimation we can prove
\begin{equation}\label{4.303}
\begin{array}{cl}
 \|x\partial_xw\|_{\esp^2}
  &\leq C\varepsilon(1+\langle  t \rangle^{\frac{1}{4}-\gamma}+ \langle  t \rangle^{\gamma}),
\end{array} 
\end{equation}
combining \eqref{4.302} and \eqref{4.303} we have 
$$\langle t\rangle^{-2}\|w\|_{\mathbf H^{1,1}}\leq C\varepsilon\langle t \rangle^{                                                                                                                                                                                                                                                                                                                                                                                                                                                                                                                                                                                                                                                                                                                                                                                                                                                                                                                                                                                                                                                                         -\frac{7}{4}-\gamma},$$
thus by \eqref{4.301} we have
\begin{equation}
\left\| w\left( t\right) \right\| _{\mathbf{L}^{\infty }\left( \mathbf{R}%
\right) }\leq C\varepsilon \left\langle t\right\rangle ^{-\frac{1}{2}}.
\label{4.30}
\end{equation}
This completes the proof of the lemma.
\end{proof}

By Lemma 4.1 and Lemma 4.2, we have the desired contradiction and we have a global in
time of solutions to (1.1). This completes the proof of Theorem 1.2.

\section{Asymptotic behavior of solutions \label{Section 5}}

We write 
\begin{equation}
\begin{array}{rcl}
 \psi \left( t\right)\hspace{-0.2cm}
&=&\hspace{-0.2cm}MD_t\mathcal{F}_s  \mathcal B U\left( -t\right) \psi \left( t\right) +\mathcal B^{-1}MD_t\mathcal{F}_s (M -1)\mathcal B  U\left( -t\right) \psi \left( t\right)\\
&&+(\mathcal B^{-1}-1)MD_t\mathcal{F}_s  \mathcal B U\left( -t\right) \psi \left( t\right) .
\end{array}
 \label{5.2}
\end{equation}
Hence to get the asymptotic behaviors of $\mathcal{F}_{s}\mathcal BU\left(
-t\right) u\left( t\right) $ we write 
\begin{equation}
\mathcal{F}_{s}\mathcal BU\left( -t\right) u\left( t\right) =\varphi +\psi , 
\notag
\end{equation}
where 
\begin{eqnarray*}
\varphi &=&\mathcal{F}_{s}\mathcal BU\left( -t\right) w\left( t\right) \\
\psi &=&\mathcal{F}_{s}\mathcal BU\left( -t\right) z\left( t\right) .
\end{eqnarray*}
Via \eqref{zd}, and by a direct calculation we get 
\begin{eqnarray}
\psi &=&\mathcal{F}_{s}U_{D}\left( -t\right) \partial _{x}\int_{0}^{t}e^{\frac{%
ix^{2}}{2\tau }}\frac{1}{\sqrt{\tau }}h\left( t-\tau \right) d\tau  \notag \\
&=&\mathcal{F}_{s}\partial _{x}U_{N}\left( -t\right) \int_{0}^{t}e^{\frac{%
ix^{2}}{2\tau }}\frac{1}{\sqrt{\tau }}h\left( t-\tau \right) d\tau  \notag \\
&=&i\xi \mathcal{F}_{c}U_{N}\left( -t\right) \int_{0}^{t}e^{\frac{ix^{2}}{%
2\tau }}\frac{1}{\sqrt{\tau }}h\left( t-\tau \right) d\tau  \notag \\
&=&\int_{0}^{t}i\xi e^{-\frac{i\xi ^{2}}{2}\tau }h\left( \tau \right) d\tau 
\notag \\
&=&\int_{0}^{\infty }i\xi e^{-\frac{i\xi ^{2}}{2}\tau }h\left( \tau \right)
d\tau -\int_{t}^{\infty }i\xi e^{-\frac{i\xi ^{2}}{2}\tau }h\left( \tau
\right) d\tau \notag \\
&=&A\left( \xi \right) +B\left( t,\xi \right)  
\label{4.18}
\end{eqnarray}
Also we note that the integration by parts gives 
\begin{eqnarray*}
&&B(t,\xi) =\int_{t}^{\infty }\frac{i\xi }{\left( 1-\frac{i\xi ^{2}}{2}\tau
\right) }\left( \partial _{\tau }\tau e^{-\frac{i\xi ^{2}}{2}\tau }\right)
h\left( \tau \right) d\tau \\
&=&\int_{t}^{\infty }\partial _{\tau }\left( \frac{i\xi }{\left( 1-\frac{%
i\xi ^{2}}{2}\tau \right) }\tau e^{-\frac{i\xi ^{2}}{2}\tau }h\left( \tau
\right) \right) d\tau -\int_{t}^{\infty }\tau e^{-\frac{i\xi ^{2}}{2}\tau
}\partial _{\tau }\frac{i\xi h\left( \tau \right) }{\left( 1-\frac{i\xi ^{2}%
}{2}\tau \right) }d\tau \\
&=&\frac{i\xi t}{\left( 1-\frac{i\xi ^{2}}{2}t\right) }e^{-\frac{i\xi ^{2}}{2%
}t}h\left( t\right) -\int_{t}^{\infty }\tau e^{-\frac{i\xi ^{2}}{2}\tau }%
\frac{i\xi \partial _{\tau }h\left( \tau \right) }{\left( 1-\frac{i\xi ^{2}}{%
2}\tau \right) }d\tau \\
&&-\int_{t}^{\infty }\tau e^{-\frac{i\xi ^{2}}{2}\tau }\frac{i\xi h\left(
\tau \right) \frac{i\xi ^{2}}{2}}{\left( 1-\frac{i\xi ^{2}}{2}\tau \right)
^{2}}d\tau .
\end{eqnarray*}
Hence 
\begin{eqnarray}
\left| B(t,\xi)\right| &\leq &C\sqrt{t}\left| h\left( t\right) \right|
+C\int_{t}^{\infty }\frac{\left| h\left( \tau \right) \right| }{\sqrt{\tau }}%
d\tau  +C\int_{t}^{\infty }\sqrt{\tau }\left| \partial _{\tau }h\left( \tau
\right) \right| d\tau .  \label{4.19}
\end{eqnarray}
as consequence
\begin{equation}
 |B(t,\xi)|\leq C\varepsilon \langle t \rangle^{-\frac{1}{2}-\varepsilon} \label{4.25}
\end{equation}

if $\left| h\left( t\right) \right| \leq C\varepsilon t^{-\frac{3}{4}}.$ This fact means that $B (t, \xi)$ is the remainder term.
We rewrite the nonlinear term as 
\begin{eqnarray*}
&&t^{-1}\left| \varphi +\psi \right| ^{2}\left( \varphi +\psi \right) \\
&=&t^{-1}\left| \varphi +A+B\left( t\right) \right| ^{2}\left( \varphi
+A+B\left( t\right) \right) \\
&=&t^{-1}\left| \varphi +A\right| ^{2}\left( \varphi +A\right) +R_{3}\left(
t\right) +O\left( \varepsilon ^{3}t^{-1-\varepsilon }\right) ,
\end{eqnarray*}
where 
\begin{equation*}
\left| R_{3}\left( t\right) \right| \leq Ct^{-1}\left| \varphi +A\right|
^{2}\left| B\left( t,\xi \right) \right| +Ct^{-1}\left| \varphi +A\right|
\left| B\left( t,\xi \right) \right| ^{2}
\end{equation*}
and by (\ref{4.25}) 
\begin{equation*}
\left| R_{3}\left( t\right) \right| \leq C\varepsilon \left\langle
t\right\rangle ^{-1-\varepsilon }\left| \varphi +A\right| ^{2}+C\varepsilon
^{2}\left\langle t\right\rangle ^{-1-2\varepsilon }\left| \varphi +A\right| .
\end{equation*}
By the fact that $\partial _{t}A=0$, we obtain 
\begin{equation}
i\partial _{t}\left( \varphi +A\right) =\lambda t^{-1}\left| \varphi
+A\right| ^{2}\left( \varphi +A\right) +O\left( \varepsilon
^{3}t^{-1-\varepsilon }\right) .  \label{4.26}
\end{equation}
Multiplying both sides of (\ref{4.26}) by 
\begin{equation*}
e^{i\lambda \int_{1}^{t}\tau ^{-1}\left| \varphi \left( \tau \right)
+A\right| ^{2}d\tau }
\end{equation*}
we get 
\begin{equation*}
i\partial _{t}\left( \varphi +A\right) e^{i\lambda \int_{1}^{t}\tau
^{-1}\left| \varphi \left( \tau \right) +A\right| ^{2}d\tau }=O\left(
\varepsilon ^{3}t^{-1-\varepsilon }\right) e^{i\lambda \int_{1}^{t}\tau
^{-1}\left| \varphi \left( \tau \right) +A\right| ^{2}d\tau }.
\end{equation*}
Integrating in time, we obtain 
\begin{equation*}
\left| \varphi \left( t,\xi \right) +A\left( \xi \right) \right| \leq
C\varepsilon
\end{equation*}
Hence we have 
\begin{equation*}
\left\| \varphi \left( t\right) +A\right\| _{\mathbf{L}^{\infty }\left( 
\mathbf{R}^{+}\right) }=\left\| \mathcal{F}_{s}\mathcal BU\left( -t\right) w\left(
t\right) +A\right\| _{\mathbf{L}^{\infty }\left( \mathbf{R}\right) }\leq
C\varepsilon
\end{equation*}
from which it follows again that 
\begin{equation}
\left\| \mathcal{F}_{s}\mathcal BU\left( -t\right) w\left( t\right) \right\| _{%
\mathbf{L}^{\infty } }\leq C\varepsilon .
\label{4.28}
\end{equation}
We begin with (\ref{4.26}) and we put 
\begin{equation*}
\Psi \left( t\right) =\left( \varphi +A\right) e^{i\lambda \int_{1}^{t}\tau
^{-1}\left| \varphi \left( \tau \right) +A\right| ^{2}d\tau },
\end{equation*}
then we have 
\begin{equation*}
i\partial _{t}\Psi \left( t\right) =O\left( \varepsilon
^{3}t^{-1-\varepsilon }\right)
\end{equation*}
Hence integration in time gives us 
\begin{equation}
\left\| \Psi \left( t\right) -\Psi \left( s\right) \right\| _{\mathbf{L}%
^{\infty } }\leq C\varepsilon
^{2}s^{-\varepsilon }  \label{5.5}
\end{equation}
which implies that there exists a $\Psi _{+}\in \mathbf{L}^{\infty }\left( 
\mathbf{R}\right) $ such that 
\begin{equation}
\left\| \Psi \left( t\right) -\Psi _{+}\right\| _{\mathbf{L}^{\infty }\left( 
\mathbf{R}^{+}\right) }\leq C\varepsilon ^{2}t^{-\varepsilon }.  \label{5.6}
\end{equation}
We next consider the asymptotics of the phase function. We define the
function $\Phi \left( t\right) $ as 
\begin{equation*}
\Phi \left( t\right) =\int_{1}^{t}\tau ^{-1}\left( \left| \Psi \left( \tau
\right) \right| ^{2}-\left| \Psi \left( t\right) \right| ^{2}\right) d\tau
+\int_{1}^{t}\tau ^{-1}\left( \left| \Psi \left( t\right) \right|
^{2}-\left| \Psi _{+}\right| ^{2}\right) d\tau .
\end{equation*}
Then 
\begin{equation}
\int_{1}^{t}\tau ^{-1}\left| \varphi \left( \tau \right) +A\right| ^{2}d\tau
=\Phi \left( t\right) +\left| \Psi _{+}\right| ^{2}\log t.  \label{5.6-1}
\end{equation}
By Theorem \ref{Theorem 4}, (\ref{5.5}) and (\ref{5.6}) 
\begin{eqnarray*}
&&\left\| \Phi \left( t\right) -\Phi \left( s\right) \right\| _{\mathbf{L}%
^{\infty } } \\
&=&C\int_{s}^{t}\tau ^{-1}\left\| \left| \Psi \left( \tau \right) \right|
^{2}-\left| \Psi \left( t\right) \right| ^{2}\right\| _{\mathbf{L}^{\infty
} }d\tau +C\left\| \left| \Psi \left( t\right)
\right| ^{2}-\left| \Psi _{+}\right| ^{2}\right\| _{\mathbf{L}^{\infty
} }\log \frac{t}{s} \\
&=&C\int_{s}^{t}\tau ^{-1}\left( \left\| \Psi \left( \tau \right) \right\| _{%
\mathbf{L}^{\infty } }+\left\| \Psi \left(
t\right) \right\| _{\mathbf{L}^{\infty }
}\right) \left\| \Psi \left( \tau \right) -\Psi \left( t\right) \right\| _{%
\mathbf{L}^{\infty } }d\tau \\
&&+\left( \left\| \Psi \left( t\right) \right\| _{\mathbf{L}^{\infty }\left( 
\mathbf{R}^{+}\right) }+\left\| \Psi _{+}\right\| _{\mathbf{L}^{\infty
} }\right) \left\| \Psi \left( t\right) -\Psi
_{+}\right\| _{\mathbf{L}^{\infty }\left( \mathbf{R}\right) }\log \frac{t}{s}
\\
&\leq &\int_{s}^{t}C\varepsilon ^{3}\tau ^{-1-\varepsilon }d\tau
+C\varepsilon ^{3}t^{-\varepsilon }\log \frac{t}{s},t>s
\end{eqnarray*}
which implies that there exists a real valued function $\Phi _{+}\in \mathbf{%
L}^{\infty }\left( \mathbf{R}\right) $ such that 
\begin{equation}
\left\| \Phi \left( t\right) -\Phi _{+}\right\| _{\mathbf{L}^{\infty }\left( 
\mathbf{R}\right) }\leq C\varepsilon ^{2}t^{-\varepsilon }\log t.
\label{5.7}
\end{equation}
Hence by (\ref{5.6-1}) and (\ref{5.7}) 
\begin{eqnarray}
&&\left\| \int_{1}^{t}\tau ^{-1}\left| \Psi \left( \tau \right) \right|
^{2}d\tau -\left( \left| \Psi _{+}\right| ^{2}\log t+\Phi _{+}\right)
\right\| _{\mathbf{L}^{\infty } }  \notag \\
&\leq &\left\| \Phi \left( t\right) -\Phi _{+}\right\| _{\mathbf{L}^{\infty
} }\leq C\varepsilon ^{2}t^{-\varepsilon }\log t.
\label{5.8}
\end{eqnarray}
Therefore we have by (\ref{5.6}) 
\begin{eqnarray*}
&&\left\| \Psi \left( t\right) e^{-i\lambda \int_{1}^{t}\tau ^{-1}\left|
\varphi \left( \tau \right) +A\right| ^{2}d\tau }-\Psi _{+}e^{-i\lambda
\left| \Psi _{+}\right| ^{2}\log t-i\lambda \Phi _{+}}\right\| _{\mathbf{L}%
^{\infty } } \\
&\leq &\left\| \Psi \left( t\right) -\Psi _{+}\right\| _{\mathbf{L}^{\infty
} } \\
&&+C\left\| \Psi _{+}\right\| _{\mathbf{L}^{\infty }\left( \mathbf{R}\right)
}\left\| \int_{1}^{t}\tau ^{-1}\left| \Psi \left( \tau \right) \right|
^{2}d\tau -\left( \left| \Psi _{+}\right| ^{2}\log t+\Phi _{+}\right)
\right\| _{\mathbf{L}^{\infty } } \\
&\leq &C\varepsilon ^{2}t^{-\varepsilon }\left( 1+\log t\right) .
\end{eqnarray*}
which implies 
\begin{equation*}
\left\| \left( \varphi \left( t\right) +A\right) -\Psi _{+}e^{-i\lambda
\left| \Psi _{+}\right| ^{2}\log t-i\lambda \Phi _{+}}\right\| _{\mathbf{L}%
^{\infty } }\leq C\varepsilon
^{2}t^{-\varepsilon }\left( 1+\log t\right) .
\end{equation*}
We replace $\Psi _{+}$ by $\Psi _{+}e^{i\lambda \Phi _{+}}$ to find that 
\begin{equation*}
\left\| \mathcal{F}_{s}U_{D}\left( -t\right) u\left( t\right) -\Psi
_{+}\left( \xi \right) e^{-i\lambda \left| \Psi _{+}\left( \xi \right)
\right| ^{2}\log t}\right\| _{\mathbf{L}^{\infty }\left( \mathbf{R}%
^{+}\right) }\leq C\varepsilon ^{2}t^{-\varepsilon }\left( 1+\log t\right)
\end{equation*}
from which it follows that 
\begin{equation}
\left\| u\left( t\right) -MD_{t}\Psi _{+}\left( \xi \right) e^{-i\lambda
\left| \Psi _{+}\left( \xi \right) \right| ^{2}\log t}\right\| _{\mathbf{L}%
^{\infty } }\leq C\varepsilon
^{2}t^{-\varepsilon }\left( 1+\log t\right) .  \label{5.9}
\end{equation}
This completes the proof of Theorem \ref{Theorem 6}.

\section{Proof of Theorem \protect\ref{Theorem 7}\label{Section 6}}

In order to prove Theorem \ref{Theorem 7}, we modify the function space $%
\mathbf{X}_{T}$ as follows. We define 
\begin{equation*}
\mathbf{X}_{T}=\left\{ \phi \left( t\right) \in \mathbf{C}\left( \left[ 0,T%
\right] ;\mathbf{X}\right) ;\left\| \phi \right\| _{\mathbf{X}%
_{T}}=\sup_{t\in \left[ 0,T\right] }\left\| \phi \left( t\right) \right\| _{%
\mathbf{X}}<\infty \right\} ,
\end{equation*}
where 
\begin{equation*}
\left\| \phi \left( t\right) \right\| _{\mathbf{X}}=\left(\left\langle
t\right\rangle ^{-\frac{1}{2}+\beta } \left\| \mathcal{F}_{s}\mathcal B U\left( -t\right)
\phi \left( t\right) \right\| _{\mathbf{L}^{\infty }\left( \mathbf{R}%
^{+}\right) }+\left\langle
t\right\rangle ^{-\frac{3}{2}+\beta }\left\| J\phi
\left( t\right) \right\| _{\mathbf{L}^{2}
}\right) +\left\| \phi \left( t\right) \right\| _{\mathbf{Y}},
\end{equation*}
with 
\begin{equation*}
\left\| \phi \left( t\right) \right\| _{\mathbf{Y}}=\left\langle
t\right\rangle ^{-\gamma _{1}}\varphi \left( t\right) ^{-1}\left( \left\|
\phi \left( t\right) \right\| _{\mathbf{H}^{2,0}
}+\left\| \partial _{t}\phi \left( t\right) \right\| _{\mathbf{L}^{2}\left( 
\mathbf{R}^{+}\right) }\right) ,
\end{equation*}
where 
\begin{equation*}
\varphi \left( t\right) =\left\{ 
\begin{array}{c}
\left\langle t\right\rangle ^{\frac{3}{4}-\beta },\text{ \ }\beta <\frac{3}{4%
}, \\ 
\left( \log \left\langle t\right\rangle \right) ^{2},\beta =\frac{3}{4}, \\ 
1,\frac{3}{4}<\beta \leq 1.%
\end{array}
\right. ,\gamma _{1}=\left\{ 
\begin{array}{c}
\varepsilon ^{\frac{2}{3}\left( p-1\right) },\text{ for }\beta =\frac{1}{2}+%
\frac{1}{p-1} \\ 
0,\text{ for }\frac{1}{2}+\frac{1}{p-1}<\beta \leq 1%
\end{array}
\right.
\end{equation*}
It is sufficient to prove a-priori estimates of local solutions in $\mathbf{X%
}_{T}$. We also note that $z$ is represented by the given data $h$
explicitly, therefore we need a-priori estimate of solutions $w$. We start
with the estimates 
\begin{equation*}
\left\| z\left( t\right) \right\| _{\mathbf{L}^{2}\left( \mathbf{R}%
^{+}\right) }^{2}\leq \int_{0}^{t}\left| \partial_x z\left( \tau ,0\right) \right|
\left| h\left( \tau \right) \right| d\tau ,
\end{equation*}
\begin{equation*}
\left\| \partial _{x}z\left( t\right) \right\| _{\mathbf{L}^{2}\left( 
\mathbf{R}^{+}\right) }^{2}\leq \int_{0}^{t}\left| \partial _{\tau }z\left(
\tau ,0\right) \right| \left| \partial _{x }z\left(
\tau ,0\right) \right| d\tau ,
\end{equation*}
\begin{equation*}
\left\| \partial _{t}z\left( t\right) \right\| _{\mathbf{L}^{2}\left( 
\mathbf{R}^{+}\right) }^{2}\leq\int_{0}^{t}\left| \partial _{\tau }z\left( \tau
,0\right) \right| \left| \partial _{\tau }h\left( \tau \right) \right| d\tau
.
\end{equation*}
By (\ref{3.13}) and (\ref{3.14}) with $h\left( 0\right) =0$%
\begin{eqnarray}
\left| z\left( t,0\right) \right| &\leq &C\int_{0}^{t}\frac{1}{\sqrt{\tau }}%
\left| h\left( t-\tau \right) \right| d\tau  \notag \\
&\leq &C\varepsilon ^{3}\int_{0}^{t}\frac{1}{\sqrt{\tau }\left\langle t-\tau
\right\rangle ^{\beta }}d\tau \leq C\varepsilon \left\langle
t\right\rangle ^{\frac{1}{2}-\beta }  \label{2.0-c}
\end{eqnarray}
and 
\begin{equation*}
\left| \partial _{x}z\left( t,0\right) \right|+\left| \partial _{t}z\left( t,0\right) \right| \leq C\varepsilon
^{3}\int_{0}^{t}\frac{1}{\sqrt{\tau }\left\langle t-\tau \right\rangle
^{1+\beta }}d\tau \leq C\varepsilon \left\langle t\right\rangle ^{-\frac{%
1}{2}},
\end{equation*}
if $\left| h\left( t\right) \right| +\left\langle t\right\rangle \left|
\partial _{t}h\left( t\right) \right| \leq C\varepsilon ^3\left\langle
t\right\rangle ^{-\beta }.$ Hence 
\begin{equation*}
\left| z\left( t,0\right) \right| +\left\langle t\right\rangle ^{1-\beta
}\left| \partial _{t}z\left( t,0\right) \right| \leq C\varepsilon^3
\left\langle t\right\rangle ^{\frac{1}{2}-\beta }
\end{equation*}
which implies 
\begin{eqnarray}
&&\left\| z\left( t\right) \right\| _{\mathbf{L}^{2}\left( \mathbf{R}%
^{+}\right) }^{2}+\left\| \partial _{x}z\left( t\right) \right\| _{\mathbf{L}%
^{2} }^{2}+\left\| \partial _{t}z\left( t\right)
\right\| _{\mathbf{L}^{2}}^{2}+\left\| \partial
_{x}^{2}z\left( t\right) \right\| _{\mathbf{L}^{2}\left( \mathbf{R}%
^{+}\right) }^{2}  \label{6.3} \\
&\leq &C\varepsilon ^{2}\int_{0}^{t}\left\langle \tau \right\rangle ^{\frac{1%
}{2}-2\beta }d\tau \leq C\varepsilon ^{6}\psi ^{2}\left( t\right) ,  \notag
\end{eqnarray}
where 
\begin{equation*}
\varphi \left( t\right) \geq \psi \left( t\right) =\left\{ 
\begin{array}{c}
\left\langle t\right\rangle ^{\frac{3}{4}-\beta },\text{ \ }\beta <\frac{3}{4%
}, \\ 
\log \left\langle t\right\rangle ,\beta =\frac{3}{4}, \\ 
1,\frac{3}{4}<\beta \leq 1.%
\end{array}
\right.
\end{equation*}
From \eqref{zd} we have
\begin{eqnarray}
\mathcal{F}_{s}\mathcal B U\left( -t\right) z\left( t,x\right)  &=&\mathcal{F}_{s}U_{D}\left( -t\right) z_D\left( t,x\right) \notag\\ 
&=&\int_{0}^{\infty }i\xi e^{-\frac{i\xi ^{2}}{2}\tau }h\left( \tau \right)
d\tau -\int_{t}^{\infty }i\xi e^{-\frac{i\xi ^{2}}{2}\tau }h\left( \tau
\right) d\tau .  \label{4.18 1} 
\end{eqnarray}
We note that the integration by parts gives 
\begin{eqnarray*}
&&\int_{t}^{\infty }i\xi e^{-\frac{i\xi ^{2}}{2}\tau }h\left( \tau \right)
d\tau =\int_{t}^{\infty }\frac{i\xi }{\left( 1-\frac{i\xi ^{2}}{2}\tau
\right) }\left( \partial _{\tau }\tau e^{-\frac{i\xi ^{2}}{2}\tau }\right)
h\left( \tau \right) d\tau \\
&=&\int_{t}^{\infty }\partial _{\tau }\left( \frac{i\xi }{\left( 1-\frac{%
i\xi ^{2}}{2}\tau \right) }\tau e^{-\frac{i\xi ^{2}}{2}\tau }h\left( \tau
\right) \right) d\tau -\int_{t}^{\infty }\tau e^{-\frac{i\xi ^{2}}{2}\tau
}\partial _{\tau }\frac{i\xi h\left( \tau \right) }{\left( 1-\frac{i\xi ^{2}%
}{2}\tau \right) }d\tau \\
&=&\frac{i\xi t}{\left( 1-\frac{i\xi ^{2}}{2}t\right) }e^{-\frac{i\xi ^{2}}{2%
}t}h\left( t\right) -\int_{t}^{\infty }\tau e^{-\frac{i\xi ^{2}}{2}\tau }%
\frac{i\xi \partial _{\tau }h\left( \tau \right) }{\left( 1-\frac{i\xi ^{2}}{%
2}\tau \right) }d\tau \\
&&-\int_{t}^{\infty }\tau e^{-\frac{i\xi ^{2}}{2}\tau }\frac{i\xi h\left(
\tau \right) \frac{i\xi ^{2}}{2}}{\left( 1-\frac{i\xi ^{2}}{2}\tau \right)
^{2}}d\tau .
\end{eqnarray*}
Hence 
\begin{eqnarray}
\\
\left| \int_{t}^{\infty }i\xi e^{-\frac{i\xi ^{2}}{2}\tau }h\left( \tau
\right) d\tau \right| &\leq &C\sqrt{t}\left| h\left( t\right) \right|
+C\int_{t}^{\infty }\frac{\left| h\left( \tau \right) \right| }{\sqrt{\tau }}%
d\tau  \notag +C\int_{t}^{\infty }\sqrt{\tau }\left| \partial _{\tau }h\left( \tau
\right) \right| d\tau .  \label{4.19 2} 
\end{eqnarray}
Similarly, 
\begin{eqnarray}
&&\left| \int_{0}^{\infty }i\xi e^{-\frac{i\xi ^{2}}{2}\tau }h\left( \tau
\right) d\tau \right|  \notag \\
&\leq &C\int_{0}^{\infty }\frac{\left| \tau \xi \partial _{\tau }h\left(
\tau \right) \right| }{\left| 1-\frac{i\xi ^{2}}{2}\tau \right| }d\tau
+C\int_{0}^{\infty }\frac{\left| \tau \xi ^{3}h\left( \tau \right) \right| }{%
\left| 1-\frac{i\xi ^{2}}{2}\tau \right| ^{2}}d\tau  \notag \\
&\leq &C\int_{0}^{\infty }\sqrt{\tau }\left| \partial _{\tau }h\left( \tau
\right) \right| d\tau +\int_{0}^{\infty }\frac{\left| h\left( \tau \right)
\right| }{\sqrt{\tau }}d\tau  \label{4.19-1}
\end{eqnarray}
Thus  if $\ \left| h\left( t\right) \right| \leq C\varepsilon ^{3}\left\langle
t\right\rangle ^{-\beta }$, we get by (\ref{4.18 1}) (\ref{4.19 2}) and (\ref{4.19-1}) 
\begin{equation}
\left| \mathcal{F}_{s}\mathcal B U\left( -t\right) z\right| \leq C\varepsilon ^ 3  \langle \tau \rangle^{-\beta+\frac{1}{2}}
\label{4.20}
\end{equation}
thus 
\begin{equation}
\langle \tau \rangle^{\beta-\frac{1}{2}}\left\| \mathcal{F}_{s}\mathcal B U\left( -t\right) z\right\|_{\esp ^\infty(\mathbb{R}^+)} \leq C\varepsilon^3
\end{equation}

 In the same way as in the proof of (\ref{4.10}) 
\begin{equation}
\left\| Jz\left( t\right) \right\| _{\mathbf{L}^{2}\left( \mathbf{R}%
^{+}\right) }\leq C\varepsilon^3 \langle t\rangle^{\frac{3}{2}-\beta }.  \label{6.1}
\end{equation}
Therefore via (\ref{6.3}), (\ref{4.20}) and (\ref{6.1}) 
\begin{equation}
\left\| z\left( t\right) \right\| _{\mathbf{X}}\leq C\varepsilon ^{3}
\label{6.4}
\end{equation}
for any $t>0$. To get a-priori estimates of $w$, we need the uniform time
decay of $z$. In a same way as in the proof of \eqref{4.20.0}
we have $\|z\|_{\mathbf{H}^{1,1}}\leq C\langle t \rangle^{1-\beta}$. Applying \eqref{4.0-5} and (\ref{6.4}) we get 
\begin{eqnarray}
\label{6.4-0} &&\\
\left\| z\left( t\right) \right\| _{\mathbf{L}^{\infty }\left( \mathbf{R}%
^{+}\right) } &\leq &Ct^{-\frac{1}{2}}\left\| \mathcal{F}_{s}\mathcal B U\left(
-t\right) z\left( t\right) \right\| _{\mathbf{L}^{\infty }\left( \mathbf{R}%
^{+}\right) }\notag\\
&&\hspace{1cm}+Ct^{-\frac{3}{4}}\left(\left\| Jz\left( t\right) \right\| _{\mathbf{L%
}^{2} }+\| z \|_{\mathbf L^ 2} \right)+Ct^{-2}\| z \|_{\mathbf H^{1,1}} \notag \\
&\leq &C\varepsilon ^{3}t^{-\frac{1}{2}}\left\langle t\right\rangle
^{1-\beta }+Ct^{-\frac{3}{4}}\left(\left\| Jz\left( t\right) \right\| _{\mathbf{L%
}^{2} }+\| z \|_{\mathbf L^ 2} \right)+Ct^{-2}\| z \|_{\mathbf H^{1,1}}  \notag \\
&\leq &C\varepsilon^3 t^{\frac{1}{2}-\beta }+C\varepsilon^3 t^{-\frac{3}{%
4}}\left\langle t\right\rangle ^{\frac{3}{2}-\beta }+t^{-2}\langle t\rangle^{1-\beta}\leq C\varepsilon^3 t^{%
\frac{1}{2}-\beta }  \notag 
\end{eqnarray}
for $t\geq 1$. By Sobolev, we get 
\begin{equation*}
\left\| z\left( t\right) \right\| _{\mathbf{L}^{\infty }\left( \mathbf{R}%
^{+}\right) }\leq C\left\| z\left( t\right) \right\| _{\mathbf{L}^{2}\left( 
\mathbf{R}^{+}\right) }^{\frac{1}{2}}\left\| \partial _{x}z\left( t\right)
\right\| _{\mathbf{L}^{2}}^{\frac{1}{2}}\leq
C\varepsilon^3
\end{equation*}
for $t\leq 1.$ Therefore 
\begin{equation}
\left\| z\left( t\right) \right\| _{\mathbf{L}^{\infty }\left( \mathbf{R}%
^{+}\right) }\leq C\varepsilon^3\left\langle t\right\rangle ^{\frac{1}{2}%
-\beta }  \label{6.2}
\end{equation}
for $t\geq 0.$

By Theorem \ref{Theorem 2}, we may assume that 
$\left\| w\right\| _{\mathbf{X}_{1}}\leq 3\varepsilon$
if we take $\varepsilon $ small enough. We now prove that for any time $%
\widetilde{T},$ the estimate 
$\left\| w\right\| _{\mathbf{X}_{\widetilde{T}}}^{2}<\varepsilon ^{\frac{4}{3}}$
holds. If the above estimate does not hold, then we can find a finite time $T $ such that 
$\left\| w\right\| _{\mathbf{X}_{T}}^{2}=\varepsilon ^{\frac{4}{3}}.$
In the below we show that $T$ satisfying the above identity does not hold.
This is the desired contradiction.

Applying the energy method to the equation of $w$ and from the same argument as in our previous paper we  obtain 

\begin{eqnarray}
&&\left\| w\left( t\right) \right\| _{\mathbf{L}^{2}\left( \mathbf{R}%
^{+}\right) }+\left\| \partial _{x}w\left( t\right) \right\| _{\mathbf{L}%
^{2} }  \notag \\
&&+\left\| \partial _{t}w\left( t\right) \right\| _{\mathbf{L}^{2}\left( 
\mathbf{R}^{+}\right) }+\left\| \partial _{x}^{2}w\left( t\right) \right\| _{%
\mathbf{L}^{2} }\leq C\varepsilon \varphi \left(
t\right) .  \label{6.6}
\end{eqnarray}

\begin{equation}
\left\| Jw\left( t\right) \right\| _{\mathbf{L}^{2}\left( \mathbf{R}%
^{+}\right) }\leq C\varepsilon \left\langle t\right\rangle ^{\frac{5}{4}%
-\beta }  \label{6.7}
\end{equation}
for $\beta =\frac{1}{2}+\frac{1}{p-1}$ and 
\begin{equation}
\left\| Jw\left( t\right) \right\| _{\mathbf{L}^{2}\left( \mathbf{R}%
^{+}\right) }\leq C\varepsilon \left\langle t\right\rangle ^{\left( \frac{5}{%
4}-\beta \right) -\delta }  \label{6.8}
\end{equation}
for $\beta >\frac{1}{2}+\frac{1}{p-1}.$ In order to get the a-priori
estimate of $\left\| \mathcal{F}_{s}\mathcal BU\left( -t\right) w\right\| _{\mathbf{L}%
^{\infty } }$, we start split the solution $u=y+e^{-x}h$. By the factorization formula
of $U\left( -t\right) $, we get in the same way as in the proof of (\ref{H}) 
\begin{equation}\label{6.9}
   \begin{array}{lcr}
  i\partial _{t}\mathcal{F}_s \mathcal B  U(-t) \left( y+e^{-x}h\left(
t\right) \right)\\
=t^{-\frac{p-1}{2}}\lambda\mathcal{F}_s  M^{-1}\F_s ^{-1}\left|\F_sM \mathcal B U\left( -t\right) u \right|^ 2  \F_sM \mathcal B U\left( -t\right) u\\
\hspace{7cm}+R_2+R_3+G.
\end{array}
 \end{equation}
 where
$$G=i\partial _{t}\mathcal{F}_s \mathcal B U(-t)  z$$
$$
 R_2=3\lambda t^{-\frac{p-1}{2}}\mathcal{F}_s ( M^{-1}-1)\F_s^{-1} \left|\F_sM \mathcal B U\left( -t\right) y \right|^ 2  \F_sM \mathcal B U\left( -t\right) y
$$
$$R_3=3\lambda t^{-\frac{p-1}{2}}\left(\left|\F_sM \mathcal B U\left( -t\right) y \right|^ 2  \F_sM \mathcal B U\left( -t\right) y-\left|\F_s \mathcal B U\left( -t\right) y \right|^ 2  \F_s\mathcal B U\left( -t\right) y\right)$$
By the similar calculations as in (\ref{C}) and (\ref{D}), we have 
\begin{eqnarray*}
&&\left\| R_{2}\left( t\right) \right\| _{\mathbf{L}^{\infty }(\mathbf{R}^{+})}+ \left\| R_{3}\left( t\right) \right\| _{\mathbf{L}^{\infty }(\mathbf{R}^{+})}\\
&&\leq C \epsilon^{\frac{2}{3}p}t^{-\frac{p-1}{2}-\frac{1}{4}}\| \mathcal F_s\mathcal BU(-t)u\|_{\esp^\infty}^{p-1}\|Ju\|_{\esp^2}+Ct^{-\frac{p-1}{2}-\frac{1}{4}p} \|Ju\|^{p}_{\esp^2}\\
&&\leq C\varepsilon ^{\frac{2}{3}p}t^{-\frac{p-1}{2}-\frac{1}{4}}t^{\left(
1-\beta \right) \left( p-1\right) }t^{\frac{5}{4}-\beta }+C\varepsilon ^{%
\frac{2}{3}p}t^{-\frac{p-1}{2}}t^{\left( 1-\beta \right) p}
\end{eqnarray*}
If $\beta =\frac{1}{2}+\frac{1}{p-1},$ then 
\begin{equation*}
\left\| R_{1}\left( t\right) \right\| _{\mathbf{L}^{\infty }(\mathbf{R}%
^{+})}+\left\| R_{2}\left( t\right) \right\| _{\mathbf{L}^{\infty }(\mathbf{R%
}^{+})}\leq C\varepsilon ^{\frac{2}{3}p}t^{-\beta }.
\end{equation*}
If $\beta >\frac{1}{2}+\frac{1}{p-1},$ then 
\begin{equation*}
\left\| R_{1}\left( t\right) \right\| _{\mathbf{L}^{\infty }(\mathbf{R}%
^{+})}+\left\| R_{2}\left( t\right) \right\| _{\mathbf{L}^{\infty }(\mathbf{R%
}^{+})}\leq C\varepsilon ^{\frac{2}{3}p}t^{-(\frac{1}{2}+\frac{1}{p+1}%
)-\delta p},
\end{equation*}
where $\delta =\beta -\left( \frac{1}{2}+\frac{1}{p-1}\right) >0.$

\ By (\ref{6.9}) we have 
\begin{eqnarray*}
&&i\partial _{t}\mathcal{F}_{c}U\left( -t\right) (y+e^x h(t)) \\
&=&\lambda t^{-\frac{p-1}{2}}\left| \mathcal{F}_{c}U\left( -t\right)
u\right| ^{p-1}\mathcal{F}_{c}U\left( -t\right) u+O\left( \varepsilon ^{%
\frac{2}{3}p}t^{-\beta }\right)+G \\
&=&O\left( \varepsilon ^{\frac{2}{3}p}t^{\frac{p+1}{2}}t^{-\beta p}\right)
+O\left( \varepsilon ^{\frac{2}{3}p}t^{-\beta }\right) +G\\
&=&O\left( \varepsilon ^{\frac{2}{3}p}t^{-\beta }\right)+G
\end{eqnarray*}
for $\beta =\frac{1}{2}+\frac{1}{p-1}$ and 
\begin{equation*}
i\partial _{t}\mathcal{F}_{c}U\left( -t\right) w=O\left( \varepsilon ^{\frac{%
2}{3}p}t^{-\beta -\delta p}\right)+G
\end{equation*}
for $\beta >\frac{1}{2}+\frac{1}{p-1}.$ From the previous estimates and via a similar process as in the proof of \eqref{4.22}, we can prove
\begin{equation}
\left\| \mathcal{F}_{c}U\left( -t\right) w\right\| _{\mathbf{L}^{\infty
} }\leq \varepsilon +C\varepsilon ^{\frac{2}{3}%
p}\left\langle t\right\rangle ^{1-\beta }\leq C\varepsilon \left\langle
t\right\rangle ^{1-\beta }.  \label{7.1}
\end{equation}
By (\ref{6.6}), (\ref{6.7}), (\ref{6.8}) and (\ref{7.1}) we get 
\begin{equation*}
\left\| w\right\| _{\mathbf{X}_{T}}\leq C\varepsilon <\varepsilon ^{\frac{2}{%
3}}.
\end{equation*}
This is the desired contradiction. This completes the proof of Theorem \ref%
{Theorem 7}.

\section{Proof of Theorem \protect\ref{Theorem 8}\label{Section 7}}

In order to prove Theorem \ref{Theorem 8}, it is sufficient to consider the
asymptotic behavior of $z$ since by Theorem \ref{Theorem 7}, we know that 
\begin{equation*}
\left\Vert w\left( t\right) \right\Vert _{\mathbf{L}^{\infty }\left( \mathbf{%
R}^{+}\right) }\leq C\varepsilon t^{\frac{1}{2}-\beta -\delta },
\end{equation*}%
where $\delta =\beta -\frac{p+1}{2(p-1)}\geq 0$ \ and 
\begin{equation*}
u\left( t,x\right) =z\left( t,s\right) +w\left( t,x\right) .
\end{equation*}%
We have from \eqref{bfz}
$$z(t,x)=\frac{1}{2\pi i}\int \limits_0^ t h(\tau) I(t-\tau,x)\ d\tau, \ \text{with \ } I(s,x)=\int \limits_{\mathbb{R}}e^{ipx}e^{ip^2 s}\frac{p}{1+i\alpha p} dp.$$
Via the stationaty phase method we have
$$I(s,x)=\sqrt{\frac{i}{\pi s}}e^{i\frac{x^2}{2s}}\frac{x/2\sqrt{s}}{1-i\alpha x/2\sqrt{s}}+I_1(s,x), \ \text{where} \ I_1(s,x)=O(\langle s\rangle ^{-\frac{1}{2}-\frac{1}{4}})$$
thus 
\begin{equation}
z(t,x)=z_1(t,x)+z_2(t,x) ,\label{01}
\end{equation}
where
\begin{equation}
\begin{array}{c}
z_1(t,x)=\displaystyle \frac{1}{2\sqrt{\pi^3 i}}\int \limits_0^ t h(\tau)\frac{ e^{i\frac{x^2}{2(t-\tau)}}}{\sqrt{t-\tau}}\frac{x/2\sqrt{t-\tau}}{1-i\alpha x/2\sqrt{t-\tau}}\ d\tau,\ \ 
z_2(t,x)\frac{1}{2\pi i}\int \limits_0^ t h(\tau) I_1(t-\tau,x)\ d\tau.
\end{array}
\end{equation}
On the other hand, we have 
\begin{equation*}
h\left( t\right) =A\frac{t}{(1+t)^{1+\beta }}+h_{1}\left( t\right) ,
\end{equation*}%
where $\left\vert \partial _{t}h\left( t\right) \right\vert \leq
\left\langle t\right\rangle ^{-1-\beta }$ and $h_{1}\left( t\right)
=O(t\left\langle t\right\rangle ^{-2-\gamma })$ for $\gamma >0$. As
consequence  it follows 
\begin{equation}
z_1\left( t,x\right) =z_{11}(t,x)+z_{12}(t,x),  \label{001}
\end{equation}%
where 
\begin{equation*}
z_{11}(t,x)=\frac{A}{i\sqrt{2i\pi }}\int_{0}^{t}\frac{\tau }{(1+\tau )^{1+\beta }}e^{i\frac{x^2}{2(t-\tau)}}\frac{x/2\sqrt{t-\tau}}{1-i\alpha x/2\sqrt{t-\tau}}
d\tau ,
\end{equation*}%
\begin{equation*}
z_{12}(t,x)=\frac{A}{i\sqrt{2i\pi }}\int_{0}^{t}h_{1}(\tau )I_1(t-\tau,x)d\tau .
\end{equation*}
By a changing of
variable $\frac{\tau }{t}=y$, we get 
\begin{equation*}
z_{11}\left( t,x\right) =t^{\frac{1}{2}-\beta }\frac{A}{i\sqrt{2i\pi }}%
\int_{0}^{1}e^{\frac{i\xi ^{2}}{2(1-y)}}\frac{1}{\sqrt{1-y}}\frac{y}{(y+%
\frac{1}{t})^{1+\beta }}\frac{\xi}{2\sqrt{1-y}-i\alpha \xi}dy
\end{equation*}%
for $\xi =\frac{x}{\sqrt{t}}.$ \ Applying 
\begin{eqnarray*}
&&\left\vert \frac{1}{y^{1+\beta }}-\frac{1}{(y+\frac{1}{t})^{1+\beta }}%
\right\vert  \\
&\leq &C\left\vert \frac{1}{y^{1+\beta }}\right\vert \left\vert 1-\frac{1}{%
\left( 1+\frac{1}{ty}\right) ^{1+\beta }}\right\vert \leq C\left\vert \frac{1%
}{y^{1+\beta }}\right\vert \left\vert \frac{1}{\left( 1+\frac{1}{ty}\right)
^{2+\beta }}\right\vert \left\vert \frac{1}{ty}\right\vert  \\
&\leq &C\left\vert \frac{1}{y^{1+\beta }}\right\vert \left\vert \frac{1}{y}%
\right\vert ^{\gamma }t^{-\gamma },0\leq \gamma \leq 1,
\end{eqnarray*}%
we get 
\begin{equation}
\begin{array}{l}
z_{11}\left( t,x\right) =\displaystyle t^{\frac{1}{2}-\beta }\frac{A}{i\sqrt{2i\pi }}%
\int_{0}^{1}e^{\frac{i\xi ^{2}}{2(1-y)}}\frac{1}{y^{\beta }\sqrt{1-y}}\frac{\xi}{2\sqrt{1-y}-i\alpha \xi}dy\\
\hspace{2cm}+\displaystyle t^{%
\frac{1}{2}-\beta -\gamma _{1}}O\left( \int_{0}^{1}\frac{1}{y^{\beta +\gamma
_{1}}\sqrt{1-y}}dy\right) 
\end{array} \label{0011}
\end{equation}%
for $0<\gamma _{1}<1-\beta $. In a similar way we can prove 
\begin{equation}
\left\Vert z_{12}(t)\right\Vert _{\esp^{\infty }}\leq C\left\langle
t\right\rangle ^{-\frac{1}{2}}  \label{002}
\end{equation}
\begin{equation}
\left\Vert z_{2}(t)\right\Vert _{\esp ^{\infty }}\leq C\left\langle
t\right\rangle ^{-\frac{1}{2}}\label{00}
\end{equation}
since $h_{1}\left( t\right) =O(t\left\langle t\right\rangle ^{-2-\gamma }).$
\ This estimate implies that $z_{12}$ and $z_2$ are the remainder terms.  As consequence from (\ref{001}) and (\ref%
{002}) it follows that  
\begin{eqnarray}
z\left( t,x\right)  &=&t^{\frac{1}{2}-\beta }A\frac{1}{i\sqrt{2i\pi }}%
\int_{0}^{1}e^{\frac{i\xi ^{2}}{2(1-y)}}\frac{1}{y^{\beta }\sqrt{1-y}}%
dy+O(t^{\frac{1}{2}-\beta -\gamma _{1}})+O(t^{-\frac{1}{2}})  \notag \\
&=&t^{\frac{1}{2}-\beta }A\frac{1}{i\sqrt{2i\pi }}\int_{0}^{1}e^{\frac{i\xi
^{2}}{2(1-y)}}\frac{1}{y^{\beta }\sqrt{1-y}}dy+O(t^{\frac{1}{2}-\beta
-\gamma _{1}})  \label{010}
\end{eqnarray}%
for $\xi =xt^{-\frac{1}{2}},0<\gamma _{1}<1-\beta .$ \ Finally from (\ref%
{010}) we get 
\begin{eqnarray*}
u(t,x) &=&z(t,x)+w(x,t) \\
&=&t^{\frac{1}{2}-\beta }A\Lambda (xt^{-\frac{1}{2}})+O(t^{\frac{1}{2}-\beta
-\gamma _{1}})+O(t^{\frac{1}{2}-\beta -\delta }) \\
&=&t^{\frac{1}{2}-\beta }A\Lambda (xt^{-\frac{1}{2}})+O(t^{\frac{1}{2}-\beta
-\delta }),
\end{eqnarray*}%
where $\gamma _{1}<1-\beta ,$\ $\delta =\beta -\frac{p+2}{2(p-1)}$ and 
\begin{equation*}
\Lambda (\xi )=\frac{1}{i\sqrt{2i\pi }}\int_{0}^{1}e^{\frac{i\xi ^{2}}{2(1-y)%
}}\frac{1}{y^{\beta }\sqrt{1-y}}\frac{\xi}{2(1-y)-i\alpha \xi}dy.
\end{equation*}%
This completes the proof of the theorem.

\textbf{Acknowledgments. }The work of N.H. is partially supported by JSPS
KAKENHI Grant Numbers JP25220702, JP15H03630. The work of E.I.K. is
partially supported by CONACYT 252053-F and PAPIIT project IN100817.  The work of L.E is supported by GSSI.\bigskip


\begin{thebibliography}{99}
\bibitem{BiondiniBui} G.Biondini and A.Bui, \textit{On the nonlinear Schr%
\"{o}dinger equation on half line with homogeneous Robin boundary conditions}%
, Stud. Appl. Math. \textbf{129}(3) (2012), pp. 249--271.

\bibitem{BrezisGal1980} H. Brezis and T. Gallouet, \textit{Nonlinear Schr%
\"{o}dinger evolution equation, }Nonlinear Anal. \textbf{4 }(1980), pp.
677-681.

\bibitem{Bu2000} C. Bu, \textit{Nonlinear Schr\"{o}dinger equation on the
semi-infinite line}, Chinese Annals of Math. \textbf{21A} (2000), pp. 1--12.

\bibitem{CarrollBu1991} R. Carroll and C. Bu, \textit{Solution of the forced
nonlinear Schr\"{o}dinger equation (NLS) using PDE techniques}, Appl. Anal.
41 (1991), pp. 33-51.

\bibitem{Cazenave} T. Cazenave, \textit{Semilinear} \textit{Schr\"{o}dinger
Equations, }Courant Inst. of Math. Sci., Amer. Math. Soc., New York,
Providence, RI, 2003, xiv+323 pp.

\bibitem{EsHaKa} Esquivel, Liliana; Hayashi, Nakao; Kaikina, Elena I.
Inhomogeneous Dirichlet-boundary value problem for one dimensional nonlinear Schrödinger equations via factorization techniques. (English) Zbl 1406.35350
J. Differ. Equations 266, No. 2-3, 1121-1152 (2019).

\bibitem{Focas} A. S. Fokas and J. Lenells, \textit{An integrable
generalization of the nonlinear Schr\"{o}dinger equation on the half-line
and solitons}. Inverse Problems \textbf{25} (2009), no. 11, 115006, 32 pp.

\bibitem{Fokas 2005} A. S. Fokas, A. R. Its, L-Y Sung. \textit{The nonlinear
Schr\"{o}dinger equation on the half-line}. Nonlinearity \textbf{18}(4)
(2005), pp. 1771--1822.

\bibitem{Fokas2018} Fokas, Athanassios S.; Himonas, A. Alexandrou;
Mantzavinos, Dionyssios The nonlinear Schr\"{o}dinger equation on the
half-line. Trans. Amer. Math. Soc. 369 (2017), no. 1, 681--709.

\bibitem{Hayashi89-1} N. Hayashi, \textit{Time decay of solutions to the Schr%
\"{o}dinger equation in exterior domains. I}, Ann. Inst. Henri. Poincar\'{e}%
, Physique Th\'{e}orique, \textbf{50 }(1989), pp. 71-81.

\bibitem{Hayashi89-2} N. Hayashi, \textit{Time decay of solutions to the Schr%
\"{o}dinger equation in exterior domains. II}, Ann. Inst. Henri. Poincar\'{e}%
, Physique Th\'{e}orique, \textbf{50 }(1989), pp. 83-93.

\bibitem{Hayashi90} N. Hayashi, \textit{Smoothing effect for nonlinear Schr%
\"{o}dinger equations in exterior domains}, J. Funct. Anal., \textbf{89 }%
(1990), pp. 444-458.

\bibitem{Hayashi94} N. Hayashi, \textit{Global existence of small radially
symmetric solutions to quadratic nonlinear evolution equations in an
exterior domain}, Math. Z. \textbf{215 }(1994), pp. 381-419.

\bibitem{bookhk} N. Hayashi and E. Kaikina, \textit{Nonlinear theory of
pseudodifferential equations on a half-line}, North-Holland Mathematics
Studies, 194. Elsevier Science B.V., Amsterdam, 2004. 319 pp.

\bibitem{HayashiKaikina2010} N. Hayashi and E. Kaikina, \textit{Benjamin-Ono
equation on a half-line,} Int. J. Math. Math. Sci. 2010, Art. ID 714534, 38
pp. 35-53.

\bibitem{HN1998} N. Hayashi and P. I. Naumkin, \textit{Asymptotics for large
time of solutions to the nonlinear Schr\"{o}dinger and Hartree equations},
Amer. J. Math., \textbf{120} (1998), 369-389.

\bibitem{H} J. Holmer, \textit{The initial-boundary-value problem for the 1D
nonlinear Schr\"{o}dinger equation on the half-line}. Differential Integral
Equations \textbf{18} (2005), no. 6, 647--668.

\bibitem{kaikinaRobin} E. Kaikina, \textit{Forced cubic Schrodinger equation with  Robin boundary data: large time asymptotics} \textit{Schr\"{o}%
dinger equation}. J. Math. Phys. \textbf{54} (2013), no. 11, 111504, 15 pp.

\bibitem{Kaikina} E. Kaikina, {\textit{Inhomogeneous Neumann initial
boundary value problem for the nonlinear Schr\"{o}dinger equation}} J.
Differential Equations \textbf{255} (2013), pp. 3338-3356.

\bibitem{Ogawa1990} T. Ogawa, \textit{A proof of Trudinger's inequality and
its application to nonlinear Schr\"{o}dinger equations}, Nonlinear Anal., 
\textbf{14}(9) (1990), pp. 765-769.

\bibitem{OgawaOzawa1991} T. Ogawa and T. Ozawa, \textit{Trudinger type
inequalities and uniqueness of weak solutions for the nonlinear Schr\"{o}%
dinger mixed problem, }J. Math. Anal. Appl. \textbf{155}(2) (1991), pp.
531-540.

\bibitem{StraussBu2001} W. Strauss and C. Bu, \textit{An inhomogeneous
boundary value problem for nonlinear Schrodinger equations}. Journal of
Differential Equations \textbf{173} (2001), pp. 79 --91.

\bibitem{Y.Tsutsumi1983} Y. Tsutsumi, \textit{Global solutions of the
nonlinear Schr\"{o}dinger equations in exterior domains}, Comm. Partial
Differential Equations, \textbf{8} (1983), pp. 1337-1374.
\end{thebibliography}
\end{document}